\numberwithin{equation}{section}
\theoremstyle{plain}
\newtheorem{theorem}[equation]{Theorem}
\newtheorem{corollary}[equation]{Corollary}
\newtheorem{lemma}[equation]{Lemma}
\newtheorem*{Mtheorem}{Main Theorem}
\theoremstyle{definition}
\newtheorem{defn}[equation]{Definition}
\theoremstyle{remark}
\newtheorem{remark}[equation]{Remark}
\numberwithin{equation}{section}
\numberwithin{equation}{section}
\newcommand{\RR}{{\mathbb{R}}}
\newcommand{\C}{\mathcal{C}}
\newcommand{\HH}{\mathcal{H}}
\newcommand{\dist}{\operatorname{dist}}
\newcommand{\R}{\mathcal{R}}
\newcommand{\pom}{\partial\Omega}
\renewcommand{\emptyset}{\mbox{\textup{\O}}}
\DeclareMathOperator*{\osc}{osc}
\DeclareMathOperator{\diam}{diam}
\DeclareMathOperator{\interior}{int}
\DeclareMathOperator*{\Lip}{Lip}
\def\XXint#1#2#3{{\setbox0=\hbox{$#1{#2#3}{\int}$}
     \vcenter{\hbox{$#2#3$}}\kern-.5\wd0}}
\DeclareMathOperator{\divg}{div}
\DeclareMathOperator{\spt}{spt}
\DeclareMathOperator{\capacity}{cap_2}
\renewcommand{\R}{\mathbb{R}}
\newcommand{\NN}{\mathbb{N}}
\newcommand{\pO}{\partial\Omega}
\newcommand{\Oj}{\Omega_j}
\newcommand{\Oinf}{\Omega_\infty}
\newcommand{\pOj}{\partial\Omega_j}
\newcommand{\pOinf}{\partial\Omega_\infty}
\newcommand{\sj}{\sigma_j}
\newcommand{\sinf}{\mu_\infty}
\newcommand{\oj}{\omega_j}
\newcommand{\oinf}{\omega_{L_\infty}}
\newcommand{\uj}{u_j}
\newcommand{\wXj}{X_j}
\newcommand{\wLj}{L_j}
\newcommand{\wAj}{A_j}
\newcommand{\wcalA}{\mathcal{A}}
\newcommand{\wdj}{\delta_j}
\newcommand{\mj}{\mu_j}
\newcommand{\minf}{\mu_\infty}
\newcommand{\ojA}{\oj^{A(p,r)}}
\begin{document}
\allowdisplaybreaks

\title[Uniform rectifiability and elliptic operators. Part I.]{Uniform rectifiability and elliptic operators satisfying a Carleson measure 
	condition. Part I: The small constant case}

\author[S. Hofmann]{Steve Hofmann}

\address{Steve Hofmann
\\
Department of Mathematics
\\
University of Missouri
\\
Columbia, MO 65211, USA} \email{hofmanns@missouri.edu}

\author[J.M. Martell]{José María Martell}

\address{José María Martell
	\\
	Instituto de Ciencias Matemáticas CSIC-UAM-UC3M-UCM
	\\
	Consejo Superior de Investigaciones Científicas
	\\
	C/ Nicolás Cabrera, 13-15
	\\
	E-28049 Madrid, Spain} \email{chema.martell@icmat.es}

\author[S. Mayboroda]{Svitlana Mayboroda}

\address{Svitlana Mayboroda
\\
Department of Mathematics
\\
University of Minnesota
\\
Minneapolis, MN 55455, USA} \email{svitlana@math.umn.edu}

\author[T. Toro]{Tatiana Toro}

\address{Tatiana Toro 
\\ 
University of Washington 
\\
Department of Mathematics 
\\
Seattle, WA 98195-4350, USA}

\email{toro@uw.edu}

\author[Z. Zhao]{Zihui Zhao}

\address{Zihui Zhao
	\\ 
	Department of Mathematics
	\\
	University of Chicago
	\\
	Chicago, IL 60637, USA}

\email{zhaozh@uchicago.edu}

%
%

\thanks{The first author was partially supported by NSF grant number DMS-1664047.
The second author acknowledges that
the research leading to these results has received funding from the European Research
Council under the European Union's Seventh Framework Programme (FP7/2007-2013)/ ERC
agreement no. 615112 HAPDEGMT. He also acknowledges financial support from the Spanish Ministry of Economy and Competitiveness, through the ``Severo Ochoa Programme for Centres of Excellence in R\&D'' (SEV-2015-0554). 
The third author was partially supported by the NSF INSPIRE Award DMS 1344235, the NSF RAISE-TAQ grant DMS 1839077, and the Simons 
Foundation grant 563916, SM. 
The fourth author was partially supported by the Craig McKibben \& Sarah Merner Professor in Mathematics, by NSF grant number DMS-1664867, and by the Simons Foundation Fellowship 614610.
The fifth author was partially supported by NSF grants DMS-1361823, DMS-1500098, DMS-1664867, DMS-1902756 and by the Institute for Advanced Study.}
\thanks{This material is based upon work supported by the National Science Foundation under Grant No. DMS-1440140 while the authors were in residence at the Mathematical Sciences Research Institute in Berkeley, California, during the Spring 2017 semester.}

\date{\today}
\subjclass[2010]{35J25, 42B37, 31B35{\tiny }.}

\keywords{Elliptic measure, uniform domain, $A_{\infty}$ class, exterior corkscrew, rectifiability.}

\begin{abstract}
The present paper, along with its sequel, establishes the correspondence between the properties of the solutions of a class of PDEs and the geometry of sets in Euclidean space. We settle the question of whether (quantitative) absolute continuity of the elliptic measure with respect to the surface measure and uniform rectifiability of the boundary are equivalent, in an optimal class of divergence form elliptic operators satisfying a suitable Carleson measure condition. The result can be viewed as a quantitative analogue of the Wiener criterion adapted to the singular $L^p$ data case.

This paper addresses the free boundary problem under the assumption of smallness of the Carleson measure of the coefficients. Part II of this work develops an extrapolation argument to bootstrap this result to the general case. The ideas in Part I constitute a novel application of techniques developed in geometric measure theory. They highlight the synergy between several areas. The ideas developed in this paper are well suited to study singularities arising in variational problems in a geometric setting.
\end{abstract}

\maketitle

\tableofcontents

\section{Introduction}

The present paper, together with its companion  \cite{HMMTZ} and its converse in \cite{KP} (see also \cite{DJe}) culminate many years of activity at the intersection of harmonic analysis, geometric measure theory, and PDEs, devoted to the complete understanding of necessary and sufficient conditions on the operator and the geometry of the domain guaranteeing absolute continuity of the elliptic measure with respect to the surface measure of the boundary.

The celebrated 1924 Wiener criterion \cite{Wiener} provided the necessary and sufficient conditions on the geometry of the domain responsible for the continuity of the harmonic functions at the boundary. In the probabilistic terms, it characterized the points of the boundary which are  ``seen" by the Brownian travelers coming from the interior of the domain. 

The question of finding necessary and sufficient geometric conditions which could guarantee adequate regularity, so that, roughly speaking, the pieces of the boundary are seen by the Brownian travelers according to their surface measure, turned out to be much more intricate. Curiously, already in 1916 F. \& M. Riesz correctly identified the key geometric notion in this context:  rectifiability of the boundary $\pom$, i.e., the existence of tangent planes almost everywhere 
with respect to arc length $\sigma$ on $\pom$.  In particular, they showed in  \cite{Rfm} that harmonic measure is (mutually) absolutely continuous with respect to $\sigma$ for
a simply connected domain in the plane with rectifiable boundary. It took more than a hundred years to establish the converse of the F. \& M. Riesz theorem and its higher dimensional analogues. The first such result appeared in 2016 \cite{7au}, and the question was fully settled for the harmonic functions in 2018 \cite{AHMMT}.

 The question of what happens in the general PDE setting has been puzzling from the beginning.
The Wiener criterion is universal: it applies to all uniformly elliptic divergence form operators with bounded coefficients and characterizes
 points of continuity of the solution at the boundary. It was realized early 
on that no such general criterion exists for determining the absolute continuity of elliptic measure with respect to the surface measure to the boundary of a domain.
Some of the challenges that arise when considering this question were highlighted by the counterexamples in \cite{CFK}, \cite{MM}. In 1984 Dahlberg 
formulated a conjecture concerning optimal conditions on a matrix of coefficients which guarantee absolute continuity of elliptic measure with respect to 
Lebesgue measure in a half-space. This question was a driving force of a thread of outstanding developments in harmonic analysis in the 80s and 90s  due to Dahlberg, Jerison, Kenig, Pipher, and others, stimulating some beautiful and far-reaching new techniques in the theory of weights and singular integral operators, to mention only a few approaches.  In \cite{KP} Kenig and Pipher proved Dahlberg's conjecture, they showed that whenever the gradient of coefficients satisfies a Carleson measure condition (to be defined below) the elliptic measure and the Lebesgue measure are mutually absolutely  continuous on a half-space and, by a change of variables argument, above a  Lipschitz graph. 


The main goal of the present pair of papers is to establish the equivalence of the absolute continuity of the elliptic measure with respect to the surface measure and the uniform rectifiability of the boundary of a domain under the Dahlberg-Kenig-Pipher (DKP) condition on the coefficients, thus providing the final, optimal results geometrically (given the assumed background 
hypotheses) and in terms of the operator at hand.


It was natural to conjecture that the equivalence of rectifiability and regularity of elliptic measure should be valid in the full generality of DKP coefficients.
Despite numerous attempts this question turned out to be notoriously resistant to existing methods.
The passage from the regularity of the solutions to partial differential equations to rectifiability, or to any geometric information on the boundary, is generally referred to as free boundary problems. This in itself is, of course, a well-studied and rich subject. Unfortunately, the typical techniques arising from minimization of the functionals are both too qualitative and too rigid to treat structural irregularities of  rectifiable sets and such weak assumptions as absolute continuity of harmonic measure. The latter became accessible only recently, with the development of the analysis of singular integrals and similar objects on uniformly rectifiable sets. In particular, the first converse of the F. \& M. Riesz theorem, \cite{7au}, directly relies on the 2012 solution of the David-Semmes conjecture regarding the boundedness of the Riesz transforms in $L^2$ \cite{NTV}. At the same time, the techniques stemming from such results for the harmonic functions are not amenable to more general operators of the DKP type, again, due to simple yet fundamental algebraic deficiencies: the derivatives of the coefficients do not offer sufficient cancellations. The present paper pioneers a different route, developing a third approach. It combines, or rather interlaces, the ``classical" free boundary blow-up and compactness arguments (originated in geometric measure theory) with the scale-invariant harmonic analysis methods. This allows us to take advantage of the appropriate amelioration of the coefficients 
obtained via a compactness approach.
Subsequently, in the second paper, where we address the conjecture in full generality, we manipulate and, ultimately, bootstrap (``extrapolate")  the results from this paper in an intricate stopping-time argument. This 
also requires the development of a transference principle.

Let us now discuss these issues at a more technical level. Throughout the paper we shall work under the assumptions that the domain $\Omega$ is uniform, i.e., open and connected in a quantitative way, and that its boundary is $(n-1)$-Ahlfors regular, that is, $(n-1)$-dimensional in a quantitative way (see Section \ref{sPrelim}). Under these conditions one can, for instance,  show that scale-invariant absolute continuity of harmonic  measure is related to the uniform  rectifiability of the boundary and even to the non-tangential accessibility of the exterior domain:

\begin{theorem}\label{thm:hmu}
Let $\Omega\subset\R^n$, $n\ge 3$, be a uniform domain (bounded or unbounded) with Ahlfors regular boundary \textup{(}cf. Definitions \ref{def:uniform} and \ref{def:ADR}\textup{)}, set $\sigma=\mathcal{H}^{n-1}|_{\pO}$, and let $\omega_{-\Delta}$ denote its associated harmonic measure. The following statements are equivalent:

\begin{enumerate}[label=\textup{(\alph*)}, itemsep=0.2cm]

\item\label{1-thm:hmu} $\omega_{-\Delta} \in A_\infty(\sigma)$ \textup{(}cf. Definition \ref{def:AinftyHMU}\textup{)}.

\item\label{2-thm:hmu} $\partial\Omega$ is uniformly rectifiable. \textup{(}cf. Definition \ref{def:UR} \textup{)}.

\item\label{3-thm:hmu} $\Omega$ satisfies the exterior corkscrew condition \textup{(}cf. Definition \ref{def:ICC}), hence, in particular, it is a chord-arc domain \textup{(}cf. Definition \ref{def:nta}).
\end{enumerate}
 \end{theorem}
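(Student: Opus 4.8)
The plan is to establish the equivalence as a cycle of implications, drawing on the substantial body of existing theory for harmonic measure that this survey-style theorem is summarizing. First, for \ref{1-thm:hmu} $\Rightarrow$ \ref{2-thm:hmu}: assuming $\omega_{-\Delta}\in A_\infty(\sigma)$, one wants to deduce uniform rectifiability of $\pO$. The natural route is through quantitative absolute continuity and the associated square-function/$\eps$-approximability estimates: the $A_\infty$ property gives scale-invariant $L^2(\sigma)$ bounds on the non-tangential maximal function of bounded harmonic functions in terms of their boundary data, and in a uniform domain with Ahlfors regular boundary this self-improves (via the Kenig--Pipher type Carleson estimates and a good-$\lambda$/stopping-time argument) to a square function bound. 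One then tests against the single-layer potential or harmonic extensions of coordinate functions to produce the Carleson measure estimate for $\nabla^2$ of harmonic extensions, which by the Dahlberg--Jerison--Kenig and David--Semmes machinery forces $\pO$ to be uniformly rectifiable. Alternatively, and more directly in the modern literature, $A_\infty$ of harmonic measure in a uniform domain with ADR boundary is known to be equivalent to the weak-$A_\infty$/BMO-solvability of the Dirichlet problem, which in turn yields uniform rectifiability.

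For \ref{2-thm:hmu} $\Rightarrow$ \ref{3-thm:hmu}: assuming $\pO$ is uniformly rectifiable, one wants the exterior corkscrew condition. In a uniform domain the interior corkscrew and Harnack chain conditions already hold; uniform rectifiability supplies, via the bilateral approximation by planes ("BAUP" / the bilateral weak geometric lemma for UR sets), that at most scales and locations the boundary is two-sidedly close to a hyperplane, which carves out a corkscrew ball on the exterior side. Making this quantitative and global — upgrading "at most scales" to "at every scale and point" — uses a corona/stopping-time decomposition together with the connectivity from the uniform domain hypothesis to propagate the exterior corkscrew points along Harnack chains. Once the exterior corkscrew condition holds alongside interior corkscrew and Harnack chains, $\Omega$ is by definition a two-sided NTA domain with ADR boundary, hence a chord-arc domain, giving the parenthetical conclusion.

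For \ref{3-thm:hmu} $\Rightarrow$ \ref{1-thm:hmu}: this is the classical direction. For a chord-arc domain (equivalently, a two-sided NTA domain with Ahlfors regular boundary), the result of Dahlberg (in the Lipschitz case) and its extensions by David--Jerison, Semmes, and Badger to chord-arc domains give $\omega_{-\Delta}\in A_\infty(\sigma)$ directly; the key inputs are the exterior corkscrew condition (which yields the doubling property and the comparison-principle estimates from both sides) and Ahlfors regularity of $\sigma$. One can also obtain this via the Riesz transform: for a UR boundary the Riesz transform is $L^2(\sigma)$-bounded, and combined with the two-sided corkscrew geometry this yields the $A_\infty$ bound by the method of Hofmann--Martell--Uriarte-Tuero.

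The main obstacle is the implication \ref{1-thm:hmu} $\Rightarrow$ \ref{2-thm:hmu} (or the contrapositive \ref{2-thm:hmu}$^c$ $\Rightarrow$ \ref{1-thm:hmu}$^c$): deducing a geometric regularity statement about $\pO$ from a purely measure-theoretic hypothesis on $\omega_{-\Delta}$ requires the full strength of the PDE-to-geometry transfer — producing Carleson measure estimates for second derivatives of harmonic functions out of $A_\infty$, and then invoking the David--Semmes characterization of uniform rectifiability. Since this theorem is quoted here as known background (the paper's own contribution concerns general Kenig--Pipher operators), the honest proof is to assemble these cited results; I would state the cycle and attribute each arrow to the appropriate source rather than reprove the deep square-function/UR equivalence from scratch.
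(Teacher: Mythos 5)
Your proposal matches the paper's treatment: Theorem \ref{thm:hmu} is quoted as known background, and the paper likewise ``proves'' it by attribution---\ref{1-thm:hmu}$\Rightarrow$\ref{2-thm:hmu} to \cite{HMU} (see also \cite{HM4,HLMN}), \ref{2-thm:hmu}$\Rightarrow$\ref{3-thm:hmu} to \cite[Theorem 1.1]{AHMNT}, and \ref{3-thm:hmu}$\Rightarrow$\ref{1-thm:hmu} to \cite{DJe} and \cite{Sem}, exactly the cycle you assemble (your heuristic sketches are broadly consistent with those sources, apart from the side remark crediting the UR-to-$A_\infty$ direction to Hofmann--Martell--Uriarte-Tuero, whose paper actually handles the converse arrow).
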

 
 Postponing all the rigorous definitions to Section~\ref{sPrelim}, we remark for the moment that uniform rectifiability is a quantitative version of the notion of rectifiability of the boundary and the Muckenhoupt condition $\omega\in A_\infty(\sigma)$ is, respectively, a quantitative form of the mutual absolute continuity of $\omega$ with respect to $\sigma$. Thus, Theorem \ref{thm:hmu} above is a quantitative form of the rigorous connection between the boundary behavior of harmonic functions and geometric properties of sets that we alluded to above. 
Returning to the ties with Wiener criterion, we point out that the property of the scale invariant absolute continuity of harmonic measure with respect to surface measure, 
at least in the presence of Ahlfors regularity of $\pom$,
 is equivalent to the solvability of the Dirichlet problem
with data in some $L^p(\pom)$, with $p<\infty$\footnote{See, e.g., \cite{H}, although the result is folkloric,
and well known in less austere settings \cite{Ke}.}; thus, such a characterization is in some sense
an analogue of Wiener's criterion for singular, rather than continuous data.

Theorem~\ref{thm:hmu} in the present form appears in \cite[Theorem 1.2]{AHMNT}. That \ref{1-thm:hmu} implies \ref{2-thm:hmu} is the main result in \cite{HMU} (see also \cite{HM4, HLMN});  that \ref{2-thm:hmu} yields \ref{3-thm:hmu} is \cite[Theorem 1.1]{AHMNT}; and the fact that \ref{3-thm:hmu}  implies \ref{1-thm:hmu} was proved in \cite{DJe}, and independently in \cite{Sem}. 

Theorem \ref{thm:hmu} 
and other recent results\footnote{We refer the reader also to recent work of Azzam \cite{Az}, in 
which the author characterizes the domains with Ahlfors regular boundaries for which $\omega_{-\Delta}\in A_\infty(\sigma)$:  they are precisely the domains with uniformly rectifiable boundary which are semi-uniform in the sense of Aikawa and Hirata \cite{AH};  see also
\cite{AHMMT, AMT-char, HM3} for related results characterizing $L^p$ solvability in the general
case that $\omega_{-\Delta}$ need not be doubling.} illuminate how the $A_\infty$ condition\footnote{And also its
non-doubling version, the weak $A_\infty$ condition.} 
 of  harmonic measure is related to the geometry of the domain $\Omega$. Unfortunately, as we pointed out above, their proofs do not extend to the optimal class of operators with variable coefficients. Indeed, the best known results in this direction  pertain to the ``direct" rather than the ``free boundary" problem. A description of the elliptic measure in a given geometric environment, is essentially due to C. Kenig and  J. Pipher. 
 In 2001  \cite{KP} C. Kenig and  J. Pipher proved what they referred to as a 1984 Dahlberg conjecture: if $\Omega\subset \RR^n$ is a bounded Lipschitz domain and the 
 elliptic matrix $\mathcal{A}$ satisfies the following Carleson measure condition:
\begin{equation}\label{KP-cond}
\sup_{\substack{q\in\pO \\ 0< r<\diam(\Omega)} } \frac{1}{r^{n-1}} \int_{B(q,r) \cap\Omega} 
\bigg(
\sup_{Y\in B(X,\frac{\delta(X)}{2})} |\nabla \mathcal{A}(Y)|^2 \delta(Y)\bigg)dX <\infty,
\end{equation}
where here and elsewhere we write $\delta(\cdot)=\dist(\cdot,\partial\Omega)$, then the 
corresponding elliptic measure $\omega_L\in A_{\infty}(\sigma) $.  
As observed in \cite{HMT1}, 	
one may carry through the proof in \cite{KP}, essentially 
unchanged, with a slightly weakened reformulation of  \eqref{KP-cond}, namely
by assuming, in place of  \eqref{KP-cond}, the following properties: 
\begin{enumerate}[label=(H\arabic*), itemsep=0.2cm]
	\item\label{H1} $\wcalA\in \Lip_{\rm loc}(\Omega)$ and $|\nabla \wcalA|\delta(\cdot) \in L^\infty(\Omega)$, where $\delta(\cdot) := \dist(\cdot,\pO)$.
	\item\label{H2} $|\nabla \wcalA|^2 \delta(\cdot)$ satisfies the Carleson measure assumption:
	\begin{equation}\label{KP-s-relaxed}
	\|\mathcal{A}\|_{\rm Car}:=\sup_{\substack{q\in\pO\\0< r<\diam(\Omega)} } \frac{1}{r^{n-1}} \int_{B(q,r) \cap\Omega} 
	|\nabla \mathcal{A}(X)|^2 \delta(X)dX <\infty\,.
	\end{equation}	
	\end{enumerate}
We shall refer to these hypotheses (jointly) as the \textbf{Dahlberg-Kenig-Pipher  (DKP)  condition}.
Note that each of \ref{H1} and \ref{H2} is implied by \eqref{KP-cond}.

Since properties
\ref{H1} and \ref{H2} are preserved in subdomains, one can use
the method of \cite{DJe} to extend the result of \cite{KP} to chord-arc domains, 
and hence the analogue of \ref{3-thm:hmu} implies \ref{1-thm:hmu} (in Theorem \ref{thm:hmu}) 
holds for operators satisfying the DKP condition. 
 \emph{This is an optimal result in the sense that weaker regularity of the coefficients could give rise to the elliptic measure which is singular with respect to the surface measure even for the half-plane \cite{CFK, MM, HMMTZ}.}

The ``free boundary" part of the problem, however, turned out to be extremely challenging.  In an effort to prove that  \ref{1-thm:hmu} implies  \ref{2-thm:hmu} or  \ref{3-thm:hmu} 
for this class of operators, the first, second and fourth authors of the present paper have recently obtained in \cite{HMT1} that under the same background hypothesis as in Theorem \ref{thm:hmu},  \ref{1-thm:hmu} implies 
 \ref{3-thm:hmu} (and hence also  \ref{2-thm:hmu})
 for elliptic operators with variable-coefficient matrices $\mathcal{A}$ 
 satisfying \ref{H1} and the Carleson measure estimate 
\begin{equation}\label{HMT-CM}
\sup_{\substack{q\in\pO \\ 0< r<\diam(\Omega)} } \frac{1}{r^{n-1}} \int_{B(q,r) \cap\Omega} 
|\nabla \mathcal{A}(X)|dX <\infty.
\end{equation}
We observe that, in the presence of hypothesis \ref{H1}, 
the latter condition is stronger than the relaxed DKP condition \eqref{KP-s-relaxed}; 
hence Theorem  \ref{thm:hmu} remains true for this new class of matrices. We also mention in
 passing that a qualitative version of this fact was obtained in \cite{ABHM}, 
and that there are
related (quantitative) results in \cite{HMM} and \cite{AGMT} that are 
valid in the absence of any connectivity hypothesis.

Nonetheless, it has remained an open problem to address such issues, assuming only
that the coefficients satisfy the DKP condition (specifically, the weighted
$W^{1,2}$ Carleson measure
estimate \eqref{KP-s-relaxed}, as opposed to the $W^{1,1}$ version \eqref{HMT-CM}).
Observe that the former is both weaker, and more natural:  for example, operators verifying 
 \eqref{KP-s-relaxed} arise as pullbacks of constant coefficient operators (see \cite[Introduction]{KP}), and also
 in the linearization of ``$A$-harmonic" (i.e., generalized $p$-harmonic) operators (see \cite[Section 4]{LV}). In particular, in light of \cite{HMMTZ} and the 
 examples included there the DKP condition is sharp.

All in all, the main motivation for this paper and its companion \cite{HMMTZ} is to understand whether the  elliptic measure of a DKP divergence form elliptic 
operator distinguishes between a rectifiable and a purely unrectifiable boundary. 
As in Theorem \ref{thm:hmu}, we make the background assumption
that $\Omega\subset \mathbb{R}^n$, $n\ge 3$, is a uniform domain
(see Definition \ref{def:uniform}) 
 with an Ahlfors regular boundary (Definition 
\ref{def:ADR}).
Analytically we consider second order divergence form symmetric elliptic
operators, that is, 
$L=-\divg(\mathcal{A}(\cdot)\nabla)$, where $\mathcal{A}= \big( a_{ij}\big)_{i,j=1}^n$ is a  
real symmetric matrix-valued function on $\Omega$, 
satisfying the usual uniform ellipticity condition 
\begin{equation}\label{def:UE}
	\lambda |\xi|^2 \leq \langle \mathcal{A}(X)\xi, \xi \rangle \leq \Lambda |\xi|^2, \qquad\text{for all } \xi\in\mathbb{R}^n\setminus\{0\}\,,
\end{equation}
for uniform constants $0<\lambda\le \Lambda<\infty$, and for a.e. $X\in\Omega$.
We further assume that $\mathcal{A}$ 
satisfies the DKP condition, that is, \ref{H1} and \ref{H2}, and, additionally, that the associated 
elliptic measure is an $A_\infty$ weight (see Definition \ref{def:AinftyHMU}) with 
respect to the surface measure $\sigma=\mathcal{H}^{n-1}|_{\pO}$. Our goal is to understand how this 
analytic information yields geometric insight regarding the geometry of the
domain and its boundary. 
We do this in two stages. First, we consider the 
case in which the quantity \eqref{KP-s-relaxed} is sufficiently small. 
This is the content of the present paper, and in this small constant setting, we can go even further, 
assuming a much weaker condition than \eqref{KP-s-relaxed} (see \eqref{def:smallCarleson} for the 
precise statement of the
assumption, and Remark \ref{remark:mainthm}). The second stage is 
taken in \cite{HMMTZ}, the sequel to the present paper, 
where we derive the ``large constant'' case (even for non-symmetric operators) by taking our small constant result as a starting point, 
and then using an extrapolation (i.e., bootstrapping) argument 
to pass to the case in which the constant
in \eqref{KP-s-relaxed} is assumed merely to be finite. 
We remark that it is at this stage that we use the full strength of the Carleson condition
\eqref{KP-s-relaxed}.

Let us make some preliminary
observations and introduce some notation. First, we  shall always work in $\R^n$, $n\ge 3$.
Throughout this paper, and unless otherwise specified, by \textit{allowable constants}, 
we mean the dimension $n\geq 3$; the constants involved in the definition of a uniform 
domain, that is, $M, C_1>1$ (see Definition \ref{def:uniform}); the Ahlfors regular constant 
$C_{AR}>1$ (see Definition \ref{def:ADR}); the ratio of the ellipticity constants 
$\Lambda/\lambda \geq 1$ (see \eqref{def:UE}), 
and the $A_{\infty}$ constants $C_0>1$ and 
$\theta\in(0,1)$ (see Definition 
\ref{def:AinftyHMU}).  
By renormalizing, we shall always assume that $\lambda = 1$; thus the new, renormalized
$\Lambda$ will be equal to the ratio $\Lambda/\lambda$ of the original ellipticity constants. 
See Remark \ref{normalize} below.

We are now ready to state our main result.

\begin{Mtheorem}\label{thm:main}
Given the values of allowable constants $n\ge 3$, $M, C_1, C_{AR}>1$, $\Lambda \ge \lambda =1$, 
$C_0>1$, and $0<\theta<1$, there exist $N$ and $\epsilon >0$ depending on the allowable constants, such that the following holds.  Let $\Omega\subset\mathbb{R}^n$ be a bounded uniform domain with constants $M, C_1$ and whose boundary $\pO$ is Ahlfors regular with constant $C_{AR}$ and set $\sigma=\mathcal{H}^{n-1}|_{\pO}$. Let $L = -\divg(\mathcal{A}(\cdot)\nabla)$ be an elliptic operator with real symmetric matrix $\mathcal{A}$ satisfying \eqref{def:UE} with ellipticity constants $1=\lambda\leq \Lambda$ such that the corresponding elliptic measure satisfies $\omega_{L} \in A_{\infty}(\sigma)$ with constants $C_0$ and $\theta$. If $\mathcal{A}$ verifies
	    \begin{equation}\label{def:smallCarleson}
	    		\C(\Omega,\mathcal{A}) : = \sup_{X\in\Omega}  \fint_{B\left(X,\delta(X)/2 \right)} |\nabla \mathcal{A}(Y)|\delta(Y)  dY  < \epsilon,
	    \end{equation}
			where $\delta(\cdot)=\dist(\cdot,\partial\Omega)$, then $\Omega$ satisfies the exterior corkscrew condition with constant $N$.
\end{Mtheorem}

\begin{corollary}
	Under the same assumption as the Main Theorem, $\Omega$ has uniformly rectifiable boundary.
\end{corollary}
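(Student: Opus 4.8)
The plan is to read off the Corollary from the Main Theorem together with the purely geometric part of Theorem~\ref{thm:hmu}, so that no new analysis is required. First I would observe that the standing hypotheses of the Corollary are precisely those under which Theorem~\ref{thm:hmu} applies: $\Omega$ is a bounded uniform domain with Ahlfors regular boundary, and hence for $\Omega$ the three conditions \ref{1-thm:hmu}, \ref{2-thm:hmu}, \ref{3-thm:hmu} are all equivalent. Next, under the remaining hypotheses of the Corollary --- namely that $\omega_L\in A_\infty(\sigma)$ with constants $C_0,\theta$ and that the smallness condition \eqref{def:smallCarleson} holds with the $\epsilon$ furnished by the Main Theorem --- the Main Theorem yields that $\Omega$ satisfies the exterior corkscrew condition with some constant $N$ depending only on the allowable constants and $n$; in other words, $\Omega$ satisfies \ref{3-thm:hmu} of Theorem~\ref{thm:hmu}. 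Finally, I would invoke the chain \ref{3-thm:hmu}$\Rightarrow$\ref{1-thm:hmu}$\Rightarrow$\ref{2-thm:hmu} of Theorem~\ref{thm:hmu} --- the first implication being \cite{DJe} (or independently \cite{Sem}) and the second being \cite{HMU} (see also \cite{HM4,HLMN}), both of which are statements about the harmonic measure $\omega_{-\Delta}$ and are therefore insensitive to the variable-coefficient operator $L$ appearing in our hypotheses --- to conclude that $\partial\Omega$ is uniformly rectifiable, which is exactly the assertion of the Corollary.

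There is no genuine obstacle at this point: all of the substantive work has already been carried out in the Main Theorem, and once the exterior corkscrew condition is in hand, uniform rectifiability of $\partial\Omega$ is automatic from the known equivalences for uniform domains with Ahlfors regular boundary. The only bookkeeping I would make explicit is the matching of quantitative dependencies, so that the uniform rectifiability constants of $\partial\Omega$ depend only on the allowable constants and $n$: this holds because the exterior corkscrew constant $N$ produced by the Main Theorem has precisely that dependence, and because the implications \ref{3-thm:hmu}$\Rightarrow$\ref{1-thm:hmu}$\Rightarrow$\ref{2-thm:hmu} are themselves quantitative, with the resulting uniform rectifiability constants controlled by $N$, $C_{AR}$ and $n$. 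Hence the conclusion follows with no additional ideas beyond a direct citation of Theorem~\ref{thm:hmu}.
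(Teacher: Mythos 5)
Your proposal is correct and is exactly the intended argument: the Main Theorem yields the exterior corkscrew condition, i.e.\ condition \ref{3-thm:hmu} of Theorem \ref{thm:hmu}, and since $\Omega$ is a bounded uniform domain with Ahlfors regular boundary, the equivalences \ref{3-thm:hmu}$\Rightarrow$\ref{1-thm:hmu}$\Rightarrow$\ref{2-thm:hmu} (which concern harmonic measure and are independent of $L$) give uniform rectifiability of $\partial\Omega$. Your remark on the quantitative dependence of the constants is also consistent with the paper's statement.
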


\begin{remark}\label{remark:mainthm}
\ \vskip-.8cm\  

\begin{enumerate}[label=\textup{(\roman*)}, itemsep=0.2cm]

\item We note that our assumption \eqref{def:smallCarleson} on the matrix $\mathcal{A}$ is much weaker than the smallness of the relaxed DKP condition \eqref{KP-s-relaxed}. To see this, given $X\in\Omega$, let $q_X \in\pO$ be such that $|X-q_X| = \delta(X)$. Then by Hölder's inequality
\begin{equation}\label{holder-cond}
\fint_{B\left(X,\delta(X)/2\right)} |\nabla \mathcal{A}(Y)|\delta(Y)dY
\lesssim 
\left(\frac{1}{\delta(X)^{n-1}} \int_{B\left(q_X, 3\delta(X)/2 \right) \cap\Omega} |\nabla \mathcal{A}(Y)|^2 \delta(Y) dY\right)^{\frac12} 
.\end{equation}
Hence \eqref{KP-s-relaxed} with sufficiently small constant gives \eqref{def:smallCarleson}. On the other hand, it is easy to see that the latter is much weaker. Assume for instance that $|\nabla \mathcal{A}|\delta\sim \epsilon$ in $\Omega$ in which case  \eqref{def:smallCarleson} holds but
\eqref{KP-s-relaxed} fails since every integral is infinity.

%

\item As will be pointed out in the proof (see Remark \ref{rem:oscila-KP}) our condition \eqref{def:smallCarleson} can be relaxed by assuming that 
	  		\begin{equation}\label{def:oscA}
	  			\osc(\Omega, \mathcal{A}) := \sup_{X\in\Omega} \fint_{B(X,\delta(X)/2)} |\mathcal{A}(Y) - \langle\mathcal{A}\rangle_{B(X,\delta(X)/2)}| dY <\epsilon,
	  		\end{equation}
where $\langle\wcalA\rangle_{B(X,\delta(X)/2)}$ denotes the average of $\wcalA$ on $B(X,\delta(X)/2)$. 

\item The hypothesis of Main Theorem, boundedness of the domain and symmetry of the operator might seem restrictive. Nevertheless this result is enough
to prove the general case for operators (not necessarily symmetric) satisfying  \eqref{def:smallCarleson} on any uniform domain with Ahlfors regular boundary.

\end{enumerate}    
\end{remark}

{
\begin{remark}\label{normalize}
We note that the $A_\infty$ constants for $\omega_L$ are not affected by the normalization $\lambda = 1$, however,
the small parameter $\epsilon$ in \eqref{def:smallCarleson} clearly depends upon this normalization.
\end{remark}
}

\begin{remark}\label{rem:subtle}
Having fixed the desired 
ellipticity constants $\lambda=1$ and $\Lambda$ and the geometric parameters $M, C_1, C_{AR}>1$, one may ask whether operators $L= -\divg(\mathcal{A}(\cdot)\nabla)$ such that $\omega_L\in A_{\infty}(\sigma)$ and 
 $\wcalA$ satisfies \eqref{def:smallCarleson} with 
small constant $\epsilon$ exist. Choosing a matrix for which the left-hand side of \eqref{KP-cond} is small (e.g., a constant coefficient matrix), we can guarantee that \eqref{def:smallCarleson} holds with a desired $\epsilon$, see \eqref{holder-cond}. It is a consequence of the work in \cite{KP} that on a chord arc domain (see Definition \ref{def:nta})
the $A_{\infty}$ constants of $\omega_L$ only depend on the ellipticity constants, the norm \eqref{KP-cond} and the geometric parameters (which include
$M, C_1, C_{AR}>1$). Thus in this case there exist constants $C_0>1$ and $\theta\in (0,1)$ such that all the conditions of the Main Theorem are satisfied.

\end{remark}

\section{Preliminaries}\label{sPrelim}



\begin{defn}\label{def:ADR}
	We say a closed set $E\subset \RR^n$ is \textbf{Ahlfors regular} with constant $C_{AR}>1$ if for any $q\in E$ and $0<r<\diam(E)$,
	\[ C_{AR}^{-1}\, r^{n-1} \leq \mathcal{H}^{n-1}(B(q,r)\cap E) \leq C_{AR}\, r^{n-1}. \]
\end{defn}

There are many equivalent characterizations of a uniformly rectifiable set, see \cite{DS2}. Since uniformly rectifiability is not the main focus of our paper, we only state one of the geometric characterizations as its definition. 
\begin{defn}\label{def:UR}
	An Ahlfors regular set $E\subset \RR^n$ is said to be \textbf{uniformly rectifiable}, if it has big pieces of Lipschitz images of $\RR^{n-1}$. That is, there exist $\theta, M>0$ such that for each $q\in E$ and $0<r<\diam(E)$, there is a Lipschitz mapping $\rho: B_{n-1}(0, r) \to \RR^n$ such that $\rho$ has Lipschitz norm $\leq M$ and 
	\[ \mathcal{H}^{n-1} \left( E\cap B(q,r) \cap \rho(B_{n-1}(0,r)) \right) \geq \theta r^{n-1}. \]
	Here $B_{n-1}(0,r)$ denote a ball of radius $r$ in $\RR^{n-1}$.
\end{defn}

\begin{defn}\label{def:ICC}
An open set $\Omega\subset\mathbb{R}^n$ is said to satisfy the \textbf{\textup{(}interior\textup{)} corkscrew condition} \textup{(}resp. the exterior corkscrew condition\textup{)} with constant $M>1$ if for every $q\in\pO$ and every $0< r<\diam(\Omega)$, there exists $A=A(q,r) \in \Omega$ \textup{(}resp. $A\in \Omega_{\rm ext}:=\mathbb{R}^n\setminus\overline{\Omega}$\textup{)} such that
 \begin{equation}\label{eqn:nta-M}
 	B\left(A, \frac{r}{M} \right) \subset B(q,r) \cap \Omega
	\qquad
	\Big(\mbox{resp. }B\left(A, \frac{r}{M} \right) \subset B(q,r) \cap \Omega_{\rm ext}.
	\Big)
	 \end{equation} 
	 The point $A$ is called  a Corkscrew point (or a non-tangential point)  relative to $\Delta(q,r)=B(q,r)\cap\pom$ in $\Omega$ (resp. $\Omega_{\rm ext}$).
\end{defn}

\begin{defn}\label{def:HCC}
An open connected set $\Omega\subset\mathbb{R}^n$ is said to satisfy the \textbf{Harnack chain condition} with constants $M, C_1>1$ if for every pair of points $A, A'\in \Omega$
there is a chain of balls $B_1, B_2, \dots, B_K\subset \Omega$ with $K \leq  M(2+\log_2^+ \Pi)$ that connects $A$ to $A'$,
where
\begin{equation}\label{cond:Lambda}
\Pi:=\frac{|A-A'|}{\min\{\delta(A), \delta(A')\}}.
\end{equation} 
Namely, $A\in B_1$, $A'\in B_K$, $B_k\cap B_{k+1}\neq\emptyset$ and for every $1\le k\le K$
\begin{equation}\label{preHarnackball}
 	C_1^{-1} \diam(B_k) \leq \dist(B_k,\partial\Omega) \leq C_1 \diam(B_k).
\end{equation}
         \end{defn}

We note that in the context of the previous definition if $\Pi\le 1$ we can trivially form the Harnack chain $B_1=B(A,3\delta(A)/5)$ and $B_2=B(A', 3\delta(A')/5)$ where \eqref{preHarnackball} holds with $C_1=3$. Hence the Harnack chain condition is non-trivial only when $\Pi> 1$.

\begin{defn}\label{def:uniform}
An open connected set $\Omega\subset\RR^n$ is said to be a \textbf{uniform} domain with constants $M, C_1$,  if it satisfies the interior corkscrew condition with constant $M$ and the Harnack chain condition with constants $M, C_1$.
\end{defn}

\begin{defn}\label{def:nta}
A uniform domain $\Omega\subset\RR^n$ is said to be \textbf{NTA} if it satisfies the exterior corkscrew condition. If one additionally assumes that $\partial\Omega$ is Ahlfors regular, the $\Omega$ is said to be a \textbf{chord-arc} domain.
\end{defn}

For any $q\in\pO$ and $r>0$, let $\Delta=\Delta(q,r)$ denote the surface ball $B(q,r) \cap \pO$, and let $T(\Delta)=B(q,r)\cap\Omega$ denote the Carleson region above $\Delta$. We always implicitly assume that $0<r< \diam ( \Omega)$. We will also write $\sigma=\mathcal{H}^{n-1}|_{\pO}$.

Given an open connected set $\Omega$ and an elliptic operator $L$ we let $\{\omega_L^X\}_{X\in\Omega}$ be the associated elliptic measure.  In the statement of the Main Theorem we assume that $\omega_L \in A_{\infty}(\sigma)$ in the following sense:

\begin{defn}\label{def:AinftyHMU}
	The elliptic measure associated with $L$ in $\Omega$ is said to be of class $A_{\infty}$ with respect to the surface measure $\sigma= \mathcal{H}^{n-1}|_{\pO}$, which we denote by $\omega_L\in A_\infty(\sigma)$, if there exist $C_0>1$ and $0<\theta<\infty$ such that for any surface ball $\Delta(q,r)=B(q,r)\cap \pO$, with $x\in\partial\Omega$ and $0<r<\diam (\Omega)$, any surface ball $\Delta'=B'\cap \pO$ centered at $\pO$ with $B'\subset B(q,r)$, and any Borel set $F\subset \Delta'$, the elliptic measure with pole at $A(q,r)$ (a corkscrew point relative to $\Delta(q,r)$) satisfies
	   \begin{equation}\label{eqn:Ainfty}
		\frac{\omega_L^{A(q,r)}(F)}{\omega_L^{A(q,r)}(\Delta')} \leq C_0\left( \frac{\sigma(F)}{\sigma(\Delta')} \right)^{\theta}.
	\end{equation}
\end{defn}

\medskip

\begin{defn}\label{def:CDC}
A domain $\Omega\subset \mathbb{R}^n$ with $n\ge 3$ is said to satisfy the \textbf{capacity density condition (CDC)} if there exists a constant $c_0>0$ such that 
\begin{equation}\label{eqn:CDC}
	\frac{\capacity(B_r(q)\setminus\Omega)}{\capacity(B_r(q))} \geq c_0,\quad \text{for any } q\in\pO \text{ and } 0<r<\diam(\Omega),
\end{equation}	
where for any set $K\subset\mathbb{R}^n$, the capacity is defined as  
	\begin{equation*}
		\capacity(K) = \inf \bigg\{\int |\nabla \varphi|^2 dX: \varphi \in C_c^{\infty}(\mathbb{R}^n), K\subset \interior\{\varphi\geq 1\}\bigg\}.
	\end{equation*}
\end{defn}
It was proved in \cite[Section 3 ]{Zh} and \cite[Lemma 3.27]{HLMN} that a domain in $\mathbb{R}^n$, $n\ge 3$, with $(n-1)$-Ahlfors regular boundary satisfies the capacity density condition with constant $c_0$ depending only on $n$ and the Ahlfors regular constant $C_{AR}$. In particular such a domain is Wiener regular and hence for any elliptic operator $L$, and any function $f\in C(\pO)$, we can define
\begin{equation}\label{eqn:ellipt-rep}
	u(X) = \int_{\partial\Omega} f(q) d\omega_L^X(q),
	\qquad X\in\Omega,
\end{equation}
and obtain that $u\in W_{\rm loc}^{1,2}(\Omega)\cap C(\overline\Omega)$, $u|_{\partial\Omega}=f$ on $\pO$ and  $Lu = 0$ in $\Omega$ in the weak sense. Moreover, if additionally $f\in \Lip(\Omega)$ then $u\in W_{}^{1,2}(\Omega)$.

\medskip

Given $L=-\divg(\mathcal{A}\nabla)$ with $\mathcal{A}$ satisfying \eqref{def:UE}, one can construct the associated elliptic measure $\omega_L$ and Green function $G$. For the latter the reader is referred to the work of Grüter and Widman \cite{GW}, while the existence of the corresponding elliptic measures is an application of the Riesz representation theorem. The behavior of $\omega_L$ and $G$, as well as the relationship between them, depends crucially on the properties of $\Omega$, and assuming that $\Omega$ is a uniform domain with the CDC one can follow the program carried out in \cite{NTA}. We summarize below the results which will be used later in this paper. For a comprehensive treatment of the subject we refer the reader to the forthcoming monograph \cite{HMT2}.



\begin{theorem}\label{thm:gw}
Let $L$ be a divergence form elliptic operator in a bounded open connected set $\Omega\subset\RR^n$,
whose coefficient matrix $A$ has been normalized so that $\lambda =1$ in \eqref{def:UE}.
Then there is a unique non-negative function $G: \Omega \times \Omega \to \RR\cup \{\infty\}$, the 
Green function associated with $L$, and a positive, finite constant $C$, depending only on dimension, and 
(given the normalization) $\Lambda$, such that the following hold:
\begin{equation}
G(\cdot, Y) \in W^{1,2}(\Omega \setminus B(Y,s))\cap W^{1,1}_0(\Omega)\cap W_0^{1,r}(\Omega), \quad \forall\,Y\in\Omega,\ \forall\,s>0,\ \forall\,r\in \left[1,\tfrac{n}{n-1}\right);
\end{equation}
\begin{equation}
\int\left\langle A(X)\nabla_X G(X,Y), \nabla \varphi(X) \right\rangle dX = \varphi(Y),
\quad \text{for all }\varphi \in C_c^{\infty}(\Omega);
\end{equation} 
\begin{equation}\label{eqn:qweak}
		\|G(\cdot,Y)\|_{L^{\frac{n}{n-2},\infty}(\Omega)} +\|\nabla G(\cdot,Y)\|_{L^{\frac{n}{n-1},\infty}(\Omega)}\le C, \quad\forall\,Y\in \Omega;
\end{equation}
\begin{equation}\label{eqn:gub}
		G(X,Y) \leq C|X-Y|^{2-n};
	\end{equation}
and
\begin{equation}\label{eqn:glb}
		G(X,Y) \geq C|X-Y|^{2-n}, \quad\text{if\ \ } |X-Y|\leq \frac78{\delta(Y)}.
	\end{equation}	
Furthermore, if $\Omega$ is a uniform domain satisfying the CDC, for any $\varphi \in C_c^{\infty}(\RR^n)$ and for almost all $Y\in\Omega$
	\begin{equation}\label{eqn:int-parts}
		-\int_{\Omega} \left\langle \mathcal{A}(X)\nabla_X G(X,Y), \nabla \varphi(X) \right\rangle dX = \int_{\pO} \varphi d\omega_L^{Y} -\varphi(Y)
	\end{equation}
	where $\{\omega_L^Y\}_{Y\in\Omega}$ is the associated elliptic measure.

\end{theorem}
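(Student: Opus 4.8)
The statement to prove is Theorem~\ref{thm:gw}, collecting the standard existence, uniqueness, size, and integration-by-parts properties of the Green function for a divergence-form elliptic operator on a bounded open connected set, with the boundary identity valid when $\Omega$ is uniform and satisfies the CDC. This is a classical package and the plan is essentially to follow the Grüter--Widman construction \cite{GW}, together with the Hofmann--Kenig--Mayboroda--Pipher / \cite{NTA}-type arguments for the boundary term, so I would not reprove everything from scratch but organize it into four blocks.

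\emph{Block 1: construction and the first two displayed identities.} I would fix $Y\in\Omega$ and build $G(\cdot,Y)$ as the limit of approximate Green functions $G_\rho(\cdot,Y)$ solving $L G_\rho(\cdot,Y) = |B(Y,\rho)|^{-1}\mathbf{1}_{B(Y,\rho)}$ with zero Sobolev boundary data (Lax--Milgram gives $G_\rho(\cdot,Y)\in W_0^{1,2}(\Omega)$). The uniform-in-$\rho$ bounds come from testing the equation against truncations of $G_\rho$ and using the Sobolev inequality (here $n\ge 3$ is used), which yields the weak-$L^{n/(n-2)}$ bound on $G_\rho$ and the weak-$L^{n/(n-1)}$ bound on $\nabla G_\rho$ uniformly in $\rho$; passing to a weak/a.e. limit produces $G(\cdot,Y)$, gives \eqref{eqn:qweak}, the membership $G(\cdot,Y)\in W^{1,2}(\Omega\setminus B(Y,s))\cap W_0^{1,1}(\Omega)\cap W_0^{1,r}(\Omega)$ for $r<n/(n-1)$, and the weak formulation $\int\langle \mathcal{A}\nabla_X G(X,Y),\nabla\varphi\rangle\,dX=\varphi(Y)$ for $\varphi\in C_c^\infty(\Omega)$ (by density and continuity of $\varphi$). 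Uniqueness follows because any two such Green functions have difference that is $L$-harmonic, lies in $W_0^{1,1}$, and the weak formulation forces it to vanish.

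\emph{Block 2: pointwise bounds \eqref{eqn:gub} and \eqref{eqn:glb}.} The upper bound $G(X,Y)\le C|X-Y|^{2-n}$ is the standard consequence of the local boundedness (De Giorgi--Nash--Moser) estimate applied to $G(\cdot,Y)$ away from $Y$ on a dyadic annulus, combined with the weak-$L^{n/(n-2)}$ bound to start the iteration; symmetry is not assumed here but one can run this on $G(\cdot,Y)$ directly (Grüter--Widman do it without symmetry). The lower bound when $|X-Y|\le \tfrac78\delta(Y)$ follows from the Harnack inequality on interior balls: $G(\cdot,Y)$ is a positive $L$-solution in $B(Y,\delta(Y))\setminus\{Y\}$ with the right singularity, so comparing with the fundamental-solution-type barrier on the ball $B(Y,\delta(Y))$ — which is possible because this ball lies inside $\Omega$ — gives the matching lower bound, the constant depending only on $n,\lambda,\Lambda$.

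\emph{Block 3: the boundary identity \eqref{eqn:int-parts}.} Here I would use that under the CDC hypothesis $\Omega$ is Wiener regular (cited above from \cite{Zh, HLMN}), so $\omega_L^Y$ is well-defined and the elliptic representation \eqref{eqn:ellipt-rep} holds. For $\varphi\in C_c^\infty(\mathbb{R}^n)$, write $u(X)=\int_{\pO}\varphi\,d\omega_L^X$, the $L$-solution with boundary data $\varphi|_{\pO}$; then $u-\varphi\in W_0^{1,2}(\Omega)$ (using that $\varphi\in\Lip$, hence $u\in W^{1,2}(\Omega)$ as stated after \eqref{eqn:ellipt-rep}), and this last function is an admissible test function for the Green function after a limiting/truncation argument to deal with the fact that $G(\cdot,Y)$ is only in $W^{1,1}_0$ near $Y$. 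Testing the weak equation for $G(\cdot,Y)$ against $u-\varphi$ and using $Lu=0$ gives $-\int_\Omega\langle\mathcal{A}\nabla_X G(X,Y),\nabla\varphi\rangle\,dX = (u-\varphi)(Y) = \int_{\pO}\varphi\,d\omega_L^Y - \varphi(Y)$ for a.e.\ $Y$ (the a.e.\ being forced by the limiting argument that moves $u$ across $G$). The main obstacle in the whole theorem is precisely this step: justifying that $u-\varphi$, which vanishes on $\pO$ only in the $W_0^{1,2}$ sense, may legitimately be inserted into the weak formulation of $G(\cdot,Y)$ despite the low integrability of $\nabla G(\cdot,Y)$ near the pole — one handles it by splitting $\Omega$ into a small ball around $Y$ (where one uses the size bounds \eqref{eqn:gub}, \eqref{eqn:qweak} and that $u-\varphi$ is bounded to show the contribution vanishes) and its complement (where $G(\cdot,Y)\in W^{1,2}$), together with a density argument approximating $u-\varphi$ by $C_c^\infty(\Omega)$ functions.

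Everything here is standard; I would cite \cite{GW} for Blocks 1--2 and \cite{NTA}, \cite{HMT2} for Block 3, and spell out only the truncation near the pole in detail.
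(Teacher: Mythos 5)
The paper does not actually prove Theorem \ref{thm:gw}: it is stated as a summary of known results, with the construction, the weak-type and pointwise bounds, and uniqueness deferred to Gr\"uter--Widman \cite{GW}, and the identity \eqref{eqn:int-parts} to the program of \cite{NTA} and the monograph \cite{HMT2} --- which is precisely the route you outline (approximate Green functions via Lax--Milgram, uniform weak-$L^{n/(n-2)}$ and weak-$L^{n/(n-1)}$ bounds, De Giorgi--Nash--Moser and Harnack for \eqref{eqn:gub}--\eqref{eqn:glb}, and a truncation-near-the-pole argument to insert $u-\varphi$ into the weak formulation of $G(\cdot,Y)$), so your proposal is consistent with the paper's treatment. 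The only step you state more loosely than the sources is uniqueness: ``the weak formulation forces it to vanish'' is not automatic for a $W_0^{1,1}$ distributional solution (such solutions can be nontrivial in low-integrability classes), and one must exploit $G(\cdot,Y)\in W^{1,2}(\Omega\setminus B(Y,s))$ together with a duality argument testing against Lax--Milgram solutions, which is exactly what \cite{GW} supplies.
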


We observe that \eqref{eqn:qweak} and Kolmogorov's inequality give that for every $1\le r<\frac{n}{n-1}$ 
\begin{equation}\label{eqn:glq}
\|G(\cdot,Y)\|_{L^r(\Omega)}\le C C_3^{\frac1r} 
|\Omega|^{\frac1r-\frac{n-2}n},
\qquad
\|\nabla G(\cdot,Y)\|_{L^r(\Omega)}\le C C_4^{\frac1r}
|\Omega|^{\frac1r-\frac{n-1}n},
	\end{equation}
where $C$ is the constant in \eqref{eqn:qweak}, $C_4=(\frac{n}{(n-2)r})'$, and $C_4=(\frac{n}{(n-1)r})'$.

\begin{lemma}\label{lem:vanishing}
	Let $\Omega$ be a uniform domain satisfying the CDC. There exist constants $C, \beta>0$ (depending on the allowable constants) such that for $q\in\pO$ and $0<r<\diam(\pO)$, and $u\geq 0$ with $Lu=0$ in $ B(q,2r) \cap\Omega$, if $u$ vanishes continuously on $\Delta(q,2r) =  B(q,2r) \cap \pO$, then
	\begin{equation}\label{eqn:1.1}
		u(X) \leq C\left( \frac{|X-q|}{r} \right)^{\beta} \sup_{B(q, 2r)\cap\Omega} u,  \qquad \text{for any} X\in\Omega\cap B(q,r).
	\end{equation}
	\end{lemma}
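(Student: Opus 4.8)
This is a classical boundary Hölder continuity (or ``boundary Harnack decay'') estimate for non-negative solutions vanishing on a portion of the boundary, and the plan is to combine an oscillation decay argument with the capacity density condition. First I would reduce to the scale $r=1$ by the usual rescaling $u_r(X)=u(q+rX)$, which is a solution of an elliptic operator with the same ellipticity constants on $B(0,2)\cap (r^{-1}(\Omega-q))$ and whose rescaled domain still satisfies the CDC with the same constant $c_0$ (the CDC being scale invariant); so it suffices to prove that $u(X)\le C|X-q|^{\beta}\sup_{B(q,2)\cap\Omega}u$ for $X\in B(q,1)\cap\Omega$. Normalizing, assume $\sup_{B(q,2)\cap\Omega}u\le 1$.

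The heart of the matter is a single-step oscillation estimate: there is a constant $\eta=\eta(n,\lambda,\Lambda,c_0)\in(0,1)$ such that if $u\ge 0$ solves $Lu=0$ in $B(q,2\rho)\cap\Omega$, vanishes continuously on $\Delta(q,2\rho)$, and $\sup_{B(q,2\rho)\cap\Omega}u\le m$, then $\sup_{B(q,\rho)\cap\Omega}u\le \eta m$. To prove this, extend $u$ by zero to $B(q,2\rho)\setminus\Omega$; because $u$ vanishes continuously on $\Delta(q,2\rho)$ and lies in $W^{1,2}_{\mathrm{loc}}$, the extension is a non-negative subsolution of $L$ in the whole ball $B(q,2\rho)$ (this is the standard fact that the zero extension of a solution vanishing on the boundary is a subsolution). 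Now one applies the De Giorgi–Nash–Moser machinery: the local boundedness (sub-mean-value) inequality for subsolutions gives that $\sup_{B(q,\rho)}u$ is controlled by an $L^2$ average of $u$ over $B(q,\tfrac32\rho)$, and the CDC ensures that $u$ vanishes on a set of non-trivial relative capacity in $B(q,\tfrac32\rho)$, so a capacitary/De Giorgi estimate (or a measure-theoretic version: since $u<m$ on a fixed fraction of the ball in capacity, its supremum on the smaller ball is strictly less than $m$) yields the gain factor $\eta<1$. Concretely, I would invoke the oscillation lemma for subsolutions that vanish on a set of positive capacity — precisely the tool used in \cite{HLMN}, \cite{Zh} and in the Wiener-regularity context — which converts ``$u=0$ on a set of relative capacity $\ge c_0$'' into ``$\esssup$ over a concentric smaller ball drops by a fixed factor''.

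Once this one-step estimate is in hand, I would iterate it over dyadic annuli: for $X\in B(q,1)\cap\Omega$ with $|X-q|=t$, pick the integer $k$ with $2^{-k-1}<t\le 2^{-k}$, apply the single-step decay $k$ times at radii $\rho=1,\tfrac12,\dots,2^{-k+1}$ (each application is legitimate since at every scale $u$ still vanishes continuously on the corresponding surface ball and $B(q,2^{-j+1})\subset B(q,2)$), obtaining $\sup_{B(q,2^{-k})\cap\Omega}u\le \eta^{k}$. Choosing $\beta>0$ so that $\eta=2^{-\beta}$, i.e. $\beta=\log_2(1/\eta)$, gives $\eta^k=2^{-\beta k}\le (2t)^{\beta}=2^{\beta}t^{\beta}$, and since $X\in B(q,2^{-k})\cap\Omega$ we conclude $u(X)\le 2^{\beta}|X-q|^{\beta}$, which after undoing the normalization is exactly \eqref{eqn:1.1} with $C=2^{\beta}$.

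The main obstacle is the one-step oscillation estimate, and specifically making rigorous that the zero extension is a subsolution and that the CDC delivers a quantitative supremum gain. The subsolution claim requires care with the function spaces (one wants $u\in W^{1,2}$ near the boundary portion after truncation, using that $u$ is bounded and vanishes continuously, together with Caccioppoli); the capacitary gain is the classical but somewhat technical ingredient behind Wiener regularity — essentially a De Giorgi-type iteration in which the level sets of $u$ near its supremum have small measure because $u$ is forced to vanish on a definite proportion (in capacity) of each ball. Both ingredients are standard in the literature the paper already cites (\cite{HLMN}, \cite{Zh}, and the forthcoming \cite{HMT2}), so in the write-up I would state the one-step lemma, cite those sources for it, and then give the short dyadic iteration in full.
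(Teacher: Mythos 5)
The paper does not actually prove this lemma: it is quoted as part of the standard elliptic background for uniform domains with the CDC (the Jerison--Kenig program, \cite[Theorem 6.18]{HKM}, \cite{HMT2}), and your argument --- zero extension as a subsolution, a one-step supremum gain from the capacity density of the complement, and dyadic iteration to produce the H\"older exponent $\beta=\log_2(1/\eta)$ --- is exactly the classical proof underlying those citations, so it is correct and essentially the intended route. The only points to watch in a full write-up are the ones you already flag: the one-step gain must be taken in its capacitary (Maz'ya/De Giorgi) form, since the CDC controls capacity rather than Lebesgue measure of $B\setminus\Omega$, and the harmless off-by-one in the dyadic bookkeeping ($\sup_{B(q,2^{-k+1})\cap\Omega}u\le\eta^k$ suffices since $B(q,2^{-k})\subset B(q,2^{-k+1})$) only affects the constant $C$.
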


\begin{lemma}\label{ellip-lb}
	Let $\Omega$ be a uniform domain satisfying the CDC. There exists $m_0\in(0,1)$ depending on the allowable constants such that for any $q\in\pO$ and $0<r<\diam (\pO)$,
	\begin{equation}\label{eqn:1.2}
		\omega_L^{A(q,r)} (\Delta(q,r)) \geq m_0.
	\end{equation}
	Here $A(q,r)$ denotes a non-tangential point for $q$ at radius $r$.
\end{lemma}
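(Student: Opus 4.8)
The plan is to prove Lemma \ref{ellip-lb}, a nondegeneracy estimate stating that the elliptic measure of a surface ball evaluated at a corkscrew point relative to that ball is bounded below by a constant $m_0$ depending only on the allowable constants. The strategy is the standard comparison between elliptic measure and the Green function via the estimate \eqref{eqn:int-parts}, combined with the pointwise lower bound \eqref{eqn:glb} for the Green function and the maximum principle.

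First I would fix $q\in\pO$ and $0<r<\diam(\pO)$, let $A=A(q,r)$ be a corkscrew point, so $B(A,r/M)\subset B(q,r)\cap\Omega$ and in particular $\delta(A)\ge r/M$. Choose a cutoff $\varphi\in C_c^\infty(B(q,2r))$ with $\varphi\equiv 1$ on $B(q,r)$, $0\le\varphi\le 1$, and $|\nabla\varphi|\lesssim 1/r$. Apply \eqref{eqn:int-parts} with pole $Y=A$ (which is legitimate for a.e.\ $Y$, and one can arrange $A$ to be such a point, or pass to a nearby point using continuity and Harnack): since $\varphi=1$ on $\Delta(q,2r)\supset\Delta(q,r)$ we get
\begin{equation*}
\omega_L^{A}(\Delta(q,r))\ \ge\ \int_{\pO}\varphi\,d\omega_L^{A}\ =\ \varphi(A)-\int_{\Omega}\langle\mathcal{A}(X)\nabla_X G(X,A),\nabla\varphi(X)\rangle\,dX\ =\ 1-\int_{\Omega}\langle\mathcal{A}\nabla_X G(\cdot,A),\nabla\varphi\rangle,
\end{equation*}
using $\varphi(A)=1$ because $A\in B(q,r)$. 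So it suffices to bound the last integral by, say, $1/2$ — but that is \emph{not} true in general for a fixed cutoff, so instead I would take $\varphi$ supported in a thin annulus $B(q,2r)\setminus B(q,r)$ philosophy won't directly work either. The cleaner route: use the cutoff on the annulus $\{r\le|X-q|\le 2r\}$. Precisely, pick $\varphi\equiv 1$ on $B(q,r)$, supported in $B(q,2r)$; then $\nabla\varphi$ is supported in $B(q,2r)\setminus B(q,r)$, and there $G(X,A)\lesssim |X-A|^{2-n}\lesssim r^{2-n}$ by \eqref{eqn:gub} (since $|X-A|\gtrsim \dist(A,\partial B(q,r))$; one may need $A$ chosen so $|A-q|\le r/2$, which can be arranged by replacing $r$ by $r/2$ in the corkscrew and relabeling). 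Then
\begin{equation*}
\left|\int_{\Omega}\langle\mathcal{A}\nabla_X G(\cdot,A),\nabla\varphi\rangle\right|\ \lesssim\ \frac{1}{r}\int_{B(q,2r)\cap\Omega}|\nabla_X G(X,A)|\,dX.
\end{equation*}
This is still not obviously small. The robust fix, which I expect to be the real argument, is to bound $\int_\Omega|\nabla_X G(\cdot,A)|\,|\nabla\varphi|$ crudely using Caccioppoli on the annulus together with the sup bound $G\lesssim r^{2-n}$ on the support of $\nabla\varphi$, giving a bound $\lesssim r^{-1}\cdot r^{(n-2)\cdot(-1)}\cdot$ (Caccioppoli gains an extra $r^{-1}$ and $r^{n/2}$...) — these scale to a \emph{dimensionless constant}, which however need not be $\le 1/2$.

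Therefore the honest proof proceeds differently: I would \emph{not} try to make the error term small, but instead only need a lower bound, and I obtain it by choosing the pole to make $\varphi(A)=1$ while controlling the error term by a \emph{possibly large but fixed} constant $C_*$, then applying Harnack/maximum principle. Actually the cleanest classical argument is: by \eqref{eqn:glb} and the corkscrew condition there is a point $A'=A(q,r)$ with $\delta(A')\ge r/M$, and for the function $v(X):=\omega_L^X(\Delta(q,r))$, which is $L$-harmonic and nonnegative in $\Omega$ and equals $1$ $\omega$-a.e.\ on $\Delta(q,r)$, one compares $v$ with $c\,r^{n-2}G(\cdot,A_0)$ where $A_0$ is a corkscrew at a slightly larger scale. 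Concretely: let $A_0=A(q,2r)$ so $\delta(A_0)\ge 2r/M$ and $|A_0-q|<2r$; then for $X\in B(q,r/(2M))$ we have $|X-A_0|\le 3r\le \tfrac78\delta(A_0)\cdot(\text{const})$... one must be careful, but a standard iteration/Harnack chain argument reduces to the case where the needed inequality $|X-A_0|\le\tfrac78\delta(A_0)$ holds. Then \eqref{eqn:glb} gives $G(X,A_0)\ge c|X-A_0|^{2-n}\ge c\,r^{2-n}$ there. On the other hand, applying \eqref{eqn:int-parts} with a cutoff adapted to $B(q,r)$ and pole $A_0$, plus the maximum principle (the function $X\mapsto v(X)-c\,r^{n-2}\int\langle\mathcal{A}\nabla G(\cdot,A_0),\nabla\varphi\rangle$-type comparison), one shows $v(A_0)\gtrsim r^{n-2}G(\text{near }q,A_0)$ is not the right direction; rather $r^{n-2}G(X,A_0)$ is itself $\le$ bounded and $\ge c$ on a whole corkscrew ball, and $v\equiv 1$ on $\Delta$, so by comparing boundary values on $\partial(B(q,r)\cap\Omega)$ — $v=1$ on the boundary piece $\Delta(q,r)$ and $v\ge 0$ elsewhere, while $c'r^{n-2}G(\cdot,A_0)$ vanishes on $\pO$ and is $\le 1$ on $\partial B(q,r)\cap\Omega$ for suitable $c'$ by \eqref{eqn:gub} — the maximum principle yields $v(X)\ge c'r^{n-2}G(X,A_0)$ throughout $B(q,r)\cap\Omega$, hence $v(A)\ge c'r^{n-2}G(A,A_0)\gtrsim 1$ by \eqref{eqn:glb} (after a Harnack chain connecting $A$ and $A_0$ inside $\Omega$, whose length is controlled since $\Omega$ is uniform). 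This last comparison is the crux.

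The main obstacle, then, is the maximum-principle comparison $v\ge c'r^{n-2}G(\cdot,A_0)$ on $B(q,r)\cap\Omega$: one must verify that on the \emph{interior} portion of the boundary, $\partial B(q,r)\cap\Omega$, the Green-function term is $\le v$, which requires the upper bound \eqref{eqn:gub} for $G$ near that sphere together with a \emph{lower} bound for $v$ on $\partial B(q,r)\cap\Omega$ — and a priori $v$ could be small there. This is circumvented by the standard trick of working at two scales (pole $A_0$ at scale $2r$, comparison region at scale $r$) so that on $\partial B(q,r)\cap\Omega$ one has $|X-A_0|\gtrsim r$, hence $r^{n-2}G(X,A_0)\le C$, a fixed constant, and absorbing that constant by instead proving $v\ge c' r^{n-2}G(\cdot,A_0)$ only needs $v\ge 0$ (true) plus that where $G$ is large (near $q$) we are inside $\Delta(q,r)$ where $v=1$; since $G(\cdot,A_0)$ is $L$-harmonic away from $A_0$ and $\lesssim r^{2-n}$ on $\partial B(q,2r)$... this requires some care but is entirely standard and is carried out in \cite{NTA}. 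I would cite Lemma \ref{lem:vanishing}, \eqref{eqn:int-parts}, \eqref{eqn:gub}, \eqref{eqn:glb}, and the Harnack chain condition, and note that all constants track through the allowable constants only, giving $m_0=m_0(n,M,C_1,C_{AR},\lambda,\Lambda)\in(0,1)$.
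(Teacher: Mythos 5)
There is a genuine gap at the crux of your argument, the maximum-principle comparison $v\ge c'\,r^{n-2}G(\cdot,A_0)$ on $B(q,r)\cap\Omega$ for $v(X)=\omega_L^X(\Delta(q,r))$. To run that comparison you must dominate $c'\,r^{n-2}G(\cdot,A_0)$ by $v$ on the \emph{whole} boundary of the comparison region, and on the interior portion $\partial B(q,r)\cap\Omega$ the Green term is of unit size (by \eqref{eqn:gub} it is $\le C$, but it is not small), while the only information you have about $v$ there is $v\ge 0$. Your proposed fix --- ``where $G$ is large we are inside $\Delta(q,r)$ where $v=1$'' --- does not address this: $G$ need not be small anywhere on $\partial B(q,r)\cap\Omega$, and $v$ can a priori be arbitrarily small there. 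The way this obstruction is overcome in the literature (this is exactly how Lemma \ref{CFMS} is deduced) is to lower-bound $v$ on the interior sphere by a fixed constant, and that lower bound \emph{is} the nondegeneracy estimate \eqref{eqn:1.2} you are trying to prove; so as sketched the argument is circular, and the appeal to ``a standard iteration/Harnack chain argument'' plus a citation of \cite{NTA} does not close it. There are secondary problems as well: $A_0=A(q,2r)$ may well lie inside $B(q,r)$, so the pole of $G(\cdot,A_0)$ can sit in your comparison region; \eqref{eqn:glb} at the pair $(A,A_0)$ requires $|A-A_0|\le\frac78\delta(A_0)$, which generally fails; and in your first display the inequality is reversed (with $\varphi\equiv1$ on $B(q,r)$ and $\spt\varphi\subset B(q,2r)$ one has $\int\varphi\,d\omega_L^A\ge\omega_L^A(\Delta(q,r))$, not $\le$), though you abandon that route anyway.

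The statement is in fact one of the standard ingredients the paper quotes from the \cite{NTA}/\cite{HMT2} program, and its proof is more elementary than your Green-function route and does not use $G$ at all: take continuous data $f$ with $\chi_{\Delta(q,r/2)}\le f\le\chi_{\Delta(q,r)}$ and let $u_f$ be the corresponding solution as in \eqref{eqn:ellipt-rep}; then $1-u_f$ is a nonnegative solution in $B(q,r/2)\cap\Omega$ vanishing continuously on $\Delta(q,r/4)$ (Wiener regularity from the CDC), so Lemma \ref{lem:vanishing} gives $1-u_f(X)\le C(|X-q|/r)^\beta\le\frac12$ for $X\in B(q,\theta r)\cap\Omega$ with $\theta$ depending only on allowable constants; hence $\omega_L^X(\Delta(q,r))\ge u_f(X)\ge\frac12$ there. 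Finally connect the corkscrew point $A(q,\theta r)$ to $A(q,r)$ by a Harnack chain of bounded length (uniformity of $\Omega$) and apply Harnack's inequality to the nonnegative solution $X\mapsto\omega_L^X(\Delta(q,r))$, which yields \eqref{eqn:1.2} with $m_0$ depending only on the allowable constants. If you want to salvage your approach, you would essentially have to insert this boundary-H\"older/Harnack step anyway, at which point the Green function becomes superfluous.
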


\begin{lemma}\label{harn-princ}
	Let $\Omega$ be a uniform domain satisfying the CDC. There exists a constant $C$ (depending on the allowable constants) such that for $q\in\pO$ and $0<r<\diam(\pO)$. If $u\geq 0$ with $Lu=0$ in $\Omega\cap B(q,2r)$ and $u$ vanishes continuously on $\Delta(q,2r)$, then 
	\begin{equation}\label{eqn:1.3}
		u(X) \leq C u(A(q,r)), \qquad \text{for any}\ X\in\Omega\cap B(q,r).
	\end{equation}
	\end{lemma}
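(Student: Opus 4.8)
The plan is to combine the boundary decay estimate of Lemma~\ref{lem:vanishing}, the interior Harnack inequality (De Giorgi--Nash--Moser), and the Harnack chain condition of the uniform domain $\Omega$, via a hole-filling iteration over dilates of the Carleson region $T(\Delta(q,r))$.

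First I would reduce matters to the inequality $\phi(1)\le C\,u(A(q,r))$, where for $s\in[1,\tfrac32]$ we set $\phi(s):=\sup_{\Omega\cap B(q,sr)}u$. This is a finite, nondecreasing function of $s$: indeed, since $u$ is continuous in $\Omega$ and vanishes continuously on $\Delta(q,2r)\supset \overline{\Omega\cap B(q,\tfrac74 r)}\cap\pO$, extending $u$ by $0$ on $\pO$ produces a continuous function on the compact set $\overline{\Omega\cap B(q,\tfrac74 r)}$, so $\phi(\tfrac32)\le\phi(\tfrac74)<\infty$ and each supremum is attained. Fix $1\le s<s'\le\tfrac32$ and a small parameter $\eta\in(0,\tfrac14)$ to be chosen at the end, and let $X^\ast\in\overline{\Omega\cap B(q,sr)}$ be a point at which $\phi(s)$ is attained. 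We may assume $\phi(s)>0$; then $X^\ast\in\Omega$ (as $u$ vanishes continuously on $\overline{\Delta(q,sr)}$), so $\delta(X^\ast)>0$.

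Now comes the dichotomy. If $\delta(X^\ast)\ge\eta(s'-s)r$, then $X^\ast$ and $A:=A(q,r)$ are points of $\Omega\cap B(q,2r)$ with $|X^\ast-A|\le\tfrac52 r$ and $\min\{\delta(X^\ast),\delta(A)\}\gtrsim(s'-s)r$, so by the Harnack chain condition they are joined by a chain of at most $C\big(1+\log^+\tfrac1{s'-s}\big)$ balls (with $C$ depending on the allowable constants and $\eta$) contained in $\Omega\cap B(q,2r)$; iterating the interior Harnack inequality along this chain gives $\phi(s)=u(X^\ast)\le C_\eta (s'-s)^{-\gamma}\,u(A)$ for some allowable $\gamma>0$. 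If instead $\delta(X^\ast)<\eta(s'-s)r$, choose $q^\ast\in\pO$ with $|X^\ast-q^\ast|=\delta(X^\ast)$ and put $\rho:=\tfrac14(s'-s)r$; a short estimate of radii (using $\eta<\tfrac14$ and $s'-s\le\tfrac12$) shows $X^\ast\in B(q^\ast,\rho)\cap\Omega$, $\overline{B(q^\ast,2\rho)}\subset B(q,s'r)\subset B(q,2r)$, and $u$ vanishes continuously on $\Delta(q^\ast,2\rho)$, so Lemma~\ref{lem:vanishing} applied with center $q^\ast$ and radius $\rho$ gives
\[
\phi(s)=u(X^\ast)\le C\Big(\frac{|X^\ast-q^\ast|}{\rho}\Big)^{\beta}\sup_{\Omega\cap B(q^\ast,2\rho)}u\le C(4\eta)^{\beta}\,\phi(s').
\]
In either case, for all $1\le s<s'\le\tfrac32$,
\[
\phi(s)\le\theta\,\phi(s')+\frac{C_\eta}{(s'-s)^{\gamma}}\,u(A(q,r)),\qquad \theta:=C(4\eta)^{\beta}.
\]

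Finally, I would fix $\eta$ small enough that $\theta<1$ and invoke the standard iteration lemma (a bounded function $\phi$ on $[1,\tfrac32]$ satisfying such an inequality with $\theta<1$ obeys $\phi(1)\le c(\theta,\gamma)\,C_\eta\,u(A(q,r))$), which yields the claim. The step I expect to be the main obstacle is the first alternative of the dichotomy: one must ensure that the Harnack chain joining $X^\ast$ to $A(q,r)$ may be taken inside the region $\Omega\cap B(q,2r)$ on which $u$ is known to be a nonnegative $L$-solution. This is a standard but slightly delicate consequence of the interior corkscrew and Harnack chain conditions in a uniform domain; it is the one point of the argument where the geometry (beyond connectivity) genuinely enters, and it is cleanest to record it as an auxiliary localization statement and then feed it into the iteration above.
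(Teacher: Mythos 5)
The paper itself does not prove Lemma \ref{harn-princ}: it is quoted as part of the background package (the Jerison--Kenig program \cite{NTA} carried out for uniform CDC domains, with details deferred to \cite{HMT2}), so your argument can only be measured against the standard literature proof. Your overall skeleton is the standard one and is mostly sound: the finiteness and attainment of $\phi(s)$, the reduction to a dichotomy, the application of Lemma \ref{lem:vanishing} in the near-boundary case (your radius bookkeeping giving $B(q^\ast,2\rho)\subset B(q,s'r)$ and the factor $C(4\eta)^\beta$ is correct), and the absorption/iteration lemma at the end are all fine.

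The genuine gap is exactly the point you deferred, and it is not a routine technicality. In the far-from-boundary alternative you iterate Harnack along a chain joining $X^\ast$ to $A(q,r)$ and assert the chain is ``contained in $\Omega\cap B(q,2r)$.'' Definition \ref{def:HCC} gives a chain of balls contained only in $\Omega$: nothing in it bounds the diameters of the balls by a multiple of $|X^\ast-A(q,r)|$, nor prevents the chain from leaving $B(q,2r)$, while $u$ is a nonnegative solution only in $\Omega\cap B(q,2r)$ (and Harnack on each $B_k$ in fact needs a solution on a fixed dilate of $B_k$). Localizing chains requires a real argument -- essentially the cigar-curve characterization of uniform domains, or a dyadic corkscrew-ladder construction climbing scales at a nearby boundary point -- and even then the chain is only guaranteed to lie in $B(q,Cr)\cap\Omega$ for an allowable constant $C$ which may well exceed $2$. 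This matters: one can produce uniform domains with Ahlfors regular boundary (an ``almost-touching handle'' whose tip dips into $B(q,r)$ but reattaches to the rest of $\Omega$ only outside $B(q,2r)$) for which any chain from such a point to $A(q,r)$ must exit $B(q,2r)$, so the containment you need simply cannot be extracted from Definition \ref{def:HCC} with the radius $2$. The standard way to make your scheme rigorous is therefore to prove the localized-chain lemma and first establish the Carleson estimate for $u\ge 0$ solving $Lu=0$ in $\Omega\cap B(q,Kr)$ and vanishing on $\Delta(q,Kr)$, with $K$ an allowable constant dictated by the chain localization, and then pass to the stated form by a covering/rescaling step (or simply note that the paper only applies the lemma to global objects -- Green functions and elliptic measures -- for which the enlarged ball is harmless). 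Since your proposal leaves precisely this geometric step as an unproved ``auxiliary localization statement,'' the proof is not complete as written.
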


\begin{lemma}\label{CFMS}
	Let $\Omega$ be a uniform domain satisfying the CDC. There exists $C>0$ depending on the allowable constants such that for $q\in\pO$ and $0<r<\diam(\pO)/M$,
	\begin{equation}
		C^{-1} \leq \dfrac{\omega_L^X(\Delta(q,r))}{r^{n-2}G(A(q,r),X)} \leq C, \qquad \text{for any } X\in \Omega\setminus B(q,4r).
	\end{equation}
	\end{lemma}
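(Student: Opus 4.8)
The plan is to prove the two inequalities separately. Write $\Delta=\Delta(q,r)$, $A=A(q,r)$ (a corkscrew point, so $r/M\le\delta(A)<r$), and set $u(X)=\omega_L^X(\Delta)$, $v(X)=r^{n-2}G(A,X)$. Both are non-negative and $L$-harmonic in $\Omega\setminus\{A\}$ ($u$ in fact in all of $\Omega$), $u\le 1$, and---$\Omega$ being Wiener regular because it satisfies the CDC---$v$ vanishes continuously on all of $\partial\Omega$. Note that for $X\in\Omega\setminus B(q,4r)$ one has $|X-A|\ge|X-q|-|q-A|>3r>\delta(A)/4$.

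For the lower estimate $r^{n-2}G(A,X)\le C\,\omega_L^X(\Delta)$ I would run the maximum principle in $\Omega^\ast:=\Omega\setminus\overline{B(A,\delta(A)/4)}$, whose boundary is $\partial\Omega\cup\partial B(A,\delta(A)/4)$. On $\partial\Omega$ one has $u\ge0=v$ in the sense of boundary values (this is the only place continuity of $G(A,\cdot)$ up to $\partial\Omega$ is used). On the inner sphere: Lemma~\ref{ellip-lb} gives $u(A)=\omega_L^A(\Delta)\ge m_0$, so Harnack's inequality in $B(A,\delta(A)/2)\subset\Omega$ upgrades this to $u\ge c\,m_0$ on $\partial B(A,\delta(A)/4)$, while \eqref{eqn:gub} together with $\delta(A)\ge r/M$ gives $v=r^{n-2}G(A,\cdot)\le C'$ there. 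Hence $u-c_\ast v\ge0$ on $\partial\Omega^\ast$ for some small $c_\ast$ depending only on the allowable constants; as $u-c_\ast v$ is a bounded $L$-solution in $\Omega^\ast$, the maximum principle gives $u-c_\ast v\ge0$ in $\Omega^\ast$, and in particular $u(X)\ge c_\ast v(X)$.

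For the upper estimate $\omega_L^X(\Delta)\le C\,r^{n-2}G(A,X)$ I would feed a cutoff into the representation formula \eqref{eqn:int-parts}. Take $\varphi\in C_c^\infty(B(q,5r/4))$ with $\mathbf{1}_{B(q,r)}\le\varphi\le\mathbf{1}_{B(q,5r/4)}$ and $|\nabla\varphi|\lesssim1/r$. Since $X\notin B(q,4r)\supset\supp\varphi$, \eqref{eqn:int-parts} with pole $X$ gives $\varphi(X)=0$ and
\[
\omega_L^X(\Delta)\le\int_{\partial\Omega}\varphi\,d\omega_L^X
=-\int_{\Omega}\big\langle\mathcal{A}\nabla_Z G(Z,X),\nabla\varphi(Z)\big\rangle\,dZ,
\]
an integral supported in the annulus $R=(B(q,5r/4)\setminus B(q,r))\cap\Omega$. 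Estimating it by $\tfrac1r\int_R|\nabla_Z G(Z,X)|\,dZ\le\tfrac1r|R|^{1/2}\|\nabla G(\cdot,X)\|_{L^2(R)}$, invoking Caccioppoli's inequality up to the boundary for the solution $G(\cdot,X)$ in $B(q,3r/2)\cap\Omega$ (legitimate since $X\notin B(q,3r/2)$ and $G(\cdot,X)$ vanishes on $\partial\Omega$), and using $|R|\lesssim r^n$, one gets $\omega_L^X(\Delta)\lesssim r^{n-2}\sup_{B(q,3r/2)\cap\Omega}G(\cdot,X)$. Finally the Carleson-type estimate of Lemma~\ref{harn-princ} applied to $G(\cdot,X)$, together with a routine Harnack-chain comparison of corkscrew points of $q$ at scales between $r$ and $2r$, gives $\sup_{B(q,3r/2)\cap\Omega}G(\cdot,X)\le C\,G(A,X)$, which completes the upper bound.

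I expect the upper estimate to be the more delicate step, as it requires assembling the representation formula, Cauchy--Schwarz, the up-to-the-boundary Caccioppoli inequality, and the Carleson estimate Lemma~\ref{harn-princ} with mutually consistent nested scales $B(q,r)\subset B(q,5r/4)\subset B(q,3r/2)\subset B(q,3r)$. The lower estimate is essentially soft; the only points needing care there are the continuity of the Green function up to $\partial\Omega$ (Wiener regularity, furnished by the CDC) and the legitimacy of the maximum principle for bounded $L$-solutions on $\Omega^\ast$.
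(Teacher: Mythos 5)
The paper itself does not prove Lemma \ref{CFMS}: it is quoted from the Jerison--Kenig program as adapted to uniform domains with the CDC (the authors defer to \cite{NTA} and \cite{HMT2}), so your argument can only be measured against the standard proof, and your plan is essentially that proof. Most of it is sound: the lower estimate via the maximum principle on $\Omega\setminus\overline{B(A,\delta(A)/4)}$, using Lemma \ref{ellip-lb} plus Harnack on the inner sphere and \eqref{eqn:gub} together with $\delta(A)\ge r/M$, is the classical comparison; and the chain ``representation formula \eqref{eqn:int-parts} $+$ Cauchy--Schwarz $+$ boundary Caccioppoli'' correctly yields $\omega_L^X(\Delta(q,r))\lesssim r^{n-2}\sup_{B(q,3r/2)\cap\Omega}G(\cdot,X)$, after which Lemma \ref{harn-princ} at scale $3r/2$ (legitimate since $X\notin B(q,3r)$) bounds the supremum by $C\,G(A(q,3r/2),X)$.

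The one step you cannot dismiss as ``routine'' is the final comparison $G(A(q,3r/2),X)\le C\,G(A(q,r),X)$. The Harnack chain joining these two corkscrew points has boundedly many balls (since $\Pi\lesssim M$), but the Harnack chain condition does not confine the chain to $B(q,4r)$: the balls only satisfy $\diam(B_k)\sim\dist(B_k,\pO)$, so the chain is only guaranteed to remain in $B(q,C(M,C_1)r)$ with $C(M,C_1)$ possibly much larger than $4$, and it may pass through or arbitrarily close to the pole $X$, where $G(\cdot,X)$ is not a solution and Harnack's inequality cannot be applied. This is exactly the subtlety the authors handle elsewhere with Lemma \ref{lemm:NC-avoid}. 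Two repairs are available. (i) Check that $A(q,r),A(q,3r/2)\notin\overline{B(X,\delta(X)/2)}$ (true: $\delta(X)\le|X-q|$ and $|A-X|\ge|X-q|-\tfrac32 r\ge\tfrac58|X-q|$ since $r\le|X-q|/4$), invoke Lemma \ref{lemm:NC-avoid} with $X_0=X$, and run Harnack along the modified chain, noting that any chain ball coming within a fixed fraction of its own diameter of $X$ necessarily has $\diam(B_k)\lesssim C_1\,\delta(X)$, so a bounded covering of each ball away from $B(X,\delta(X)/4)$ suffices and the constants stay allowable. (ii) Sidestep the two-corkscrew comparison altogether: take $\varphi\equiv1$ on $B(q,r/2)$ with $\supp\varphi\subset B(q,3r/4)$, do Caccioppoli in $B(q,r)$, apply Lemma \ref{harn-princ} at scale exactly $r$ (valid since $X\notin B(q,2r)$) to get $\omega_L^X(\Delta(q,r/2))\lesssim r^{n-2}G(A(q,r),X)$, and conclude with the doubling property, Lemma \ref{doubling} at radius $r/2$, which the paper supplies as an independent black box. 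With either repair your proof is complete.
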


\begin{lemma}\label{doubling}
	Let $\Omega$ be a uniform domain satisfying the CDC. There exists $C>0$ depending on the allowable constants such that for any $q\in\pO$ and $0<r<\diam(\pO)/4$, if $X\in \Omega\setminus B(q,4r)$, then 
	\begin{equation}\label{eqn:1.5}
		\omega_L^X(\Delta(q,2r)) \leq C\omega_L^X(\Delta(q,r)).
	\end{equation}
	\end{lemma}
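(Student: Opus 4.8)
\emph{Overview.} The plan is to use the CFMS comparison estimate (Lemma \ref{CFMS}) to reduce the doubling inequality to an interior Harnack inequality for the Green function over a fixed ``thick'' Carleson region, and to treat the one remaining configuration of poles directly by a Harnack chain argument combined with the nondegeneracy estimate (Lemma \ref{ellip-lb}).

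\emph{Setup.} First I would fix a covering $\Delta(q,2r)\subset\bigcup_{i=1}^{N}\Delta(q_i,r/8)$ with $q_i\in\overline{\Delta(q,2r)}$ and $N$ depending only on $n$ and $C_{AR}$ (Ahlfors regularity), together with one surface ball $\Delta(q_0,r/8)\subset\Delta(q,r)$ with $q_0\in\Delta(q,r/2)$. Let $P_i=A(q_i,r/8)$ and $P_0=A(q_0,r/8)$ be associated corkscrew points; each of these satisfies $\delta(P)\ge r/(8M)$ and is contained in $B(q,3r)$, so all of $P_0,P_1,\dots,P_N$ lie in the thick region $W:=\{Z\in\Omega:\delta(Z)>r/(16M)\}\cap B(q,3r)$. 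Because $\Omega$ is a uniform domain, $W$ is connected and of uniformly bounded geometry, so any two points of $W$ are joined by a Harnack chain of length bounded in terms of $M,C_1$ whose (slightly dilated) balls stay inside a fixed enlargement $\widetilde W\subset\Omega$ of $W$; in particular $\widetilde W\subset\{\delta>c_1 r\}\cap B(q,c_2 r)$ for structural constants $c_1,c_2$.

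\emph{Case 1: $X\notin\widetilde W$.} This holds in particular whenever $\delta(X)\le c_1 r$ or $|X-q|\ge c_2 r$. Since $X\notin\widetilde W$, the Green function $G(\cdot,X)$, which solves $Lu=0$ and is positive in $\Omega\setminus\{X\}$, is a positive solution in a neighborhood of $\overline W$; hence the interior Harnack inequality chained along the bounded chains above yields $G(P_i,X)\approx G(P_0,X)$ for every $i$, with constant depending only on the allowable constants. Since $|X-q_i|>r/2=4(r/8)$ and $|X-q_0|>r/2$, Lemma \ref{CFMS} at scale $r/8$ gives $\omega_L^X(\Delta(q_i,r/8))\lesssim r^{n-2}G(P_i,X)$ and $\omega_L^X(\Delta(q_0,r/8))\gtrsim r^{n-2}G(P_0,X)$. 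Summing over $i$,
\[
\omega_L^X(\Delta(q,2r))\le\sum_{i=1}^N\omega_L^X(\Delta(q_i,r/8))\lesssim N\,r^{n-2}G(P_0,X)\lesssim\omega_L^X(\Delta(q_0,r/8))\le\omega_L^X(\Delta(q,r)).
\]

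\emph{Case 2: $X\in\widetilde W$.} Then $\delta(X)>c_1 r$ and $|X-q|<c_2 r$, so $|X-A(q,r)|\lesssim r\lesssim\min\{\delta(X),\delta(A(q,r))\}$ and $X$ is joined to $A(q,r)$ by a Harnack chain of length bounded in terms of the allowable constants. Applying the interior Harnack inequality to the nonnegative global $L$-solution $Z\mapsto\omega_L^Z(\Delta(q,r))$ along this chain, and then Lemma \ref{ellip-lb},
\[
\omega_L^X(\Delta(q,r))\gtrsim\omega_L^{A(q,r)}(\Delta(q,r))\ge m_0,
\]
while trivially $\omega_L^X(\Delta(q,2r))\le 1$; this gives the doubling bound with constant $\lesssim m_0^{-1}$.

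\emph{Remaining points and main difficulty.} When $r$ is comparable to $\diam(\pO)$ the scale $r/8$ may exceed the range of validity of Lemma \ref{CFMS}; in that regime one instead covers $\Delta(q,2r)$ by boundary balls of radius $\sim\diam(\pO)/M$, which only enlarges $N$ by a structural factor, and the argument is otherwise unchanged. The thresholds $c_1,c_2$ are chosen precisely so that $\widetilde W$ (hence every Harnack chain traversing it) avoids $X$; this is where the hypothesis $X\in\Omega\setminus B(q,4r)$ and the uniformity of $\Omega$ enter. The step I expect to be the main obstacle is the verification that $P_0,P_1,\dots,P_N$ sit in a single connected region of uniformly bounded geometry disjoint from $X$, so that the interior Harnack inequality in Case~1 applies with a constant depending only on the allowable constants; this requires the standard but somewhat delicate Harnack-chain analysis of uniform domains, and one must take care to invoke here only the interior Harnack inequality and Lemma \ref{CFMS}, not the boundary comparison principle, which is itself a downstream consequence of the doubling property being proved.
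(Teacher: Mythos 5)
The paper does not actually prove Lemma \ref{doubling}: it is stated as part of the standard package of estimates for uniform domains satisfying the CDC, with the reader referred to the program of \cite{NTA} and the forthcoming monograph \cite{HMT2}. Your argument is essentially that classical proof and it is correct: cover $\Delta(q,2r)$ by boundedly many surface balls at a comparable but smaller scale, apply Lemma \ref{CFMS} at that scale to each piece and to a small ball inside $\Delta(q,r)$, compare the Green function at the various corkscrew points through Harnack chains that avoid the pole $X$, and dispose of the leftover configuration ($X$ in the enlarged region $\widetilde W$, so $\delta(X)\gtrsim r$ and $|X-q|\lesssim r$) by Harnack along a bounded chain plus Lemma \ref{ellip-lb} and $\omega_L^X(\partial\Omega)\le 1$. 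Your key geometric claim is justified exactly as you suspect: the chains joining the $P_i$'s to $P_0$ consist of boundedly many balls whose diameters can change only by a bounded factor from one ball to the next, so every (dilated) chain ball has diameter $\approx r$ and lies in $B(q,c_2r)\cap\{\delta>c_1r\}$, which makes your dichotomy exhaustive and legitimizes the interior Harnack inequality for $G(\cdot,X)$ in Case 1. Two bookkeeping remarks: the scale restriction in Lemma \ref{CFMS} is $0<s<\diam(\pO)/M$, so rather than switching regimes when $r$ is large you should simply take the covering radius of the form $r/(8M')$ with $M'$ structural and large enough; this keeps $|X-q_i|\ge 4\cdot(\text{covering radius})$ and only inflates $N$ by an allowable factor. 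Also, the bounded covering number does not require Ahlfors regularity (a packing argument in $\RR^n$ suffices), which is relevant because the lemma is stated for CDC domains; and your caution about not invoking the boundary comparison principle is well placed, since Case 2 correctly uses only the interior Harnack inequality for the solution $Z\mapsto\omega_L^Z(\Delta(q,r))$ together with Lemma \ref{ellip-lb}.
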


\medskip

\begin{remark}\label{rem:doubling:needed}
In the proof of our main result the following observation will be useful. If $M$ denotes the corkscrew constant for $\Omega$, it follows easily from the previous result,  Lemma \ref{eqn:1.2} and Harnack's inequality that 
\begin{equation}\label{doubling:needed}
\omega_L^X(\Delta(q,2r)) \leq C_2\omega_L^X(\Delta(q,r)),
\end{equation}
for every $q\in\partial\Omega$, $0<r<\diam(\partial\Omega)$ and for all $X\in\Omega$ with $\delta(X)\ge r/(2M)$. Here $C_2$ is a constant that depends on the allowable parameters associated with $\Omega$ and the ellipticity constants of $L$. 

\end{remark}

Our next result establishes that if a domain satisfies the Harnack chain condition then we can modify the chain of balls so that they avoid a non-tangential balls inside:

\begin{lemma}\label{lemm:NC-avoid}
Let $\Omega\subset \mathbb{R}^n$ be an open set satisfying the Harnack chain condition with constants $M, C_1>1$. Given $X_0\in\Omega$, let $B_{X_0}=B(X_0,\delta(X_0)/2)$. For every $X,Y\in \Omega\setminus \overline{B_{X_0}}$, if  we set $\Pi=|X-Y|/\min\{\delta(X), \delta(Y)\}$, then 
there is a chain of open Harnack balls $B_1, B_2, \dots, B_K\subset \Omega$ with $K \leq  100 (M+C_1^2)(2+\log_2^+ \Pi)$ that connects $X$ to $Y$. Namely, $X\in B_1$, $Y\in B_K$, $B_k\cap B_{k+1}\neq\emptyset$ for every $1\le k\le K-1$ 
and for every $1\le k\le K$
\begin{equation}\label{preHarnackball:new}
 	(100\,C_1)^{-2} \diam(B_k) \leq \dist(B_k,\partial\Omega) \leq 100\,C_1^2 \diam(B_k).
\end{equation}
Moreover, $B_k\cap \frac12 B_{X_0}=\emptyset$ for every $1\le k\le K$.
\end{lemma}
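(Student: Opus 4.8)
The plan is to start from the Harnack chain $B_1',\dots,B_{K'}'$ for $X,Y$ provided by the Harnack chain condition (Definition \ref{def:HCC}), with $K'\le M(2+\log_2^+\Pi)$ and each ball satisfying $C_1^{-1}\diam(B_k')\le\dist(B_k',\pO)\le C_1\diam(B_k')$, and then surgically modify it so that it avoids $\tfrac12 B_{X_0}$ while keeping comparable control on all the relevant constants. First I would record two elementary facts that will be used repeatedly: (i) the endpoints $X,Y$ lie outside $\overline{B_{X_0}}$, so $|X-X_0|\ge\delta(X_0)/2$ and likewise for $Y$; and (ii) any Harnack ball $B_k'$ that meets $\tfrac14 B_{X_0}$ must have radius controlled by $\delta(X_0)$ (indeed $\dist(B_k',\pO)\le\delta(X_0)+\tfrac14\delta(X_0)$, hence $\diam(B_k')\lesssim C_1\delta(X_0)$), and conversely a ball meeting $\tfrac12 B_{X_0}$ but not $\tfrac14 B_{X_0}$ has radius bounded below by a fixed multiple of $\delta(X_0)$ unless it is tiny. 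This dichotomy is what lets us reroute.

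Next I would do the rerouting. The idea is: whenever the original chain enters the forbidden region $\tfrac12 B_{X_0}$, I replace the offending consecutive sub-collection of balls by a detour that runs along (a slightly enlarged copy of) the sphere $\partial(\tfrac12 B_{X_0})$. Concretely, if $B_{k_1}',\dots,B_{k_2}'$ is a maximal run of balls meeting $\tfrac12 B_{X_0}$, pick the last ball before the run, say $B_{k_1-1}'$, and the first ball after it, $B_{k_2+1}'$; these two balls each meet $\Omega\setminus\tfrac12 B_{X_0}$ and have centers at controlled distance from $X_0$ (comparable to $\delta(X_0)$). I then connect a point of $B_{k_1-1}'\cap(\Omega\setminus \tfrac12 B_{X_0})$ to a point of $B_{k_2+1}'\cap(\Omega\setminus\tfrac12 B_{X_0})$ by a bounded number (depending only on $n$) of balls of radius $\sim\delta(X_0)$, each centered on the sphere $\{|Z-X_0|=\tfrac34\delta(X_0)\}$: such balls are contained in the annulus $\tfrac12 B_{X_0}^c\cap B(X_0,\delta(X_0))$, hence they lie in $\Omega$ (since $B_{X_0}\subset\Omega$), they avoid $\tfrac12 B_{X_0}$, and since $\dist(\cdot,\pO)\ge\delta(X_0)/4$ on that annulus while the diameter is $\sim\delta(X_0)$, they satisfy \eqref{preHarnackball:new} with the stated constants (here $100$ is comfortably generous). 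Because the original chain has no monotonicity, I would first argue that at most one such maximal run can occur with the two ``exit'' balls far apart on the sphere; more carefully, each maximal run gets its own detour, and the number of runs is bounded by $K'$, so the new chain has length at most $K'+ (\text{const}\cdot n)\cdot(\text{number of runs})\le C(n) K'$; absorbing $C(n)$ and the passage from $C_1$ to $100 C_1^2$ gives $K\le 100(M+C_1^2)(2+\log_2^+\Pi)$ after a crude bound. I also need to double-check the endpoint case where $X$ or $Y$ itself is close to (but outside) $\overline{B_{X_0}}$: then the first/last original ball may already intrude into $\tfrac12 B_{X_0}$, but since $X\notin\overline{B_{X_0}}$ we can start the detour from $X$ directly, using that $\dist(X,\pO)\ge$ a controlled fraction of $\delta(X_0)$ in that regime.

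Finally I would verify the three required properties of the output chain: $X\in B_1$ and $Y\in B_K$ (the first and last balls are either untouched original balls or, in the endpoint case, detour balls containing $X$ resp. $Y$); consecutive balls overlap (the original overlaps are preserved on the untouched portions, and the detour is constructed as an overlapping necklace of balls whose first ball overlaps $B_{k_1-1}'$ and whose last overlaps $B_{k_2+1}'$); the Harnack bound \eqref{preHarnackball:new} (original balls satisfy it with $C_1\le 100 C_1^2$ and $C_1^{-1}\ge(100C_1)^{-2}$; detour balls satisfy it by the annulus computation above); and the avoidance $B_k\cap\tfrac12 B_{X_0}=\emptyset$ — this holds for detour balls by construction, and I need to additionally prune or shrink any \emph{original} balls that, while not being part of a ``run'' meeting $\tfrac12 B_{X_0}$, still clip it; since such a ball meets $\tfrac12 B_{X_0}$ by hypothesis it would have been in a run, so in fact no original ball in the final chain meets $\tfrac12 B_{X_0}$, and we are done. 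The main obstacle is bookkeeping: making the rerouting construction precise enough that the length bound $K\le 100(M+C_1^2)(2+\log_2^+\Pi)$ genuinely comes out with those explicit constants, and handling the non-monotone, possibly highly oscillatory original chain (which may enter and leave $\tfrac12 B_{X_0}$ many times) without blowing up the count — this is why one bounds the number of runs by $K'$ rather than by $1$, and absorbs everything into the generous constant $100(M+C_1^2)$.
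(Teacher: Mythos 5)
Your basic idea—reroute the chain around $X_0$ along a sphere centered at $X_0$ of radius comparable to $\delta(X_0)$—is the same as the paper's, but two linked points in your execution form a genuine gap. First, the detour as described does not connect to the retained chain. You assert that the flanking balls $B_{k_1-1}'$, $B_{k_2+1}'$ have centers at distance comparable to $\delta(X_0)$ from $X_0$; this is unjustified. An offending ball meets $\tfrac12 B_{X_0}$, where $\delta\le \tfrac54\delta(X_0)$, so by \eqref{preHarnackball} its diameter is only bounded by $\sim C_1\delta(X_0)$, and the flanking balls themselves can be arbitrarily large; even a point chosen in the overlap with the offending ball is only guaranteed to lie within $\sim C_1\delta(X_0)$ of $X_0$. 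Balls of radius $\lesssim\delta(X_0)$ centered on $\{|Z-X_0|=\tfrac34\delta(X_0)\}$ (and note such balls must have radius $\le\tfrac14\delta(X_0)$ to be sure they stay in $B(X_0,\delta(X_0))\subset\Omega$; your claim that $\delta\ge\delta(X_0)/4$ on the whole annulus is false near its outer boundary) cannot reach such a point, so you need bridging balls, e.g.\ along a segment inside the offending ball, where one only knows $\delta\gtrsim\delta(X_0)/C_1$; this costs on the order of $C_1^2$ balls per detour, not a dimensional constant $C(n)$ — this is exactly where the $C_1^2$ in \eqref{preHarnackball:new} and in the count comes from. Second, once the per-detour cost is $\sim C_1^2$, your bookkeeping collapses: you perform one detour per maximal run and bound the number of runs only by $K'\le M(2+\log_2^+\Pi)$, so the total is of order $M\,C_1^2(2+\log_2^+\Pi)$, which cannot be ``absorbed'' into $100(M+C_1^2)(2+\log_2^+\Pi)$ when $M$ and $C_1$ are both large.

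The paper's proof avoids both problems with one organizational idea you are missing: it makes a \emph{single} detour. Let $k_-$ and $k_+$ be the first and last balls of the original chain meeting $B_{X_0}$ (the full ball, not just $\tfrac12 B_{X_0}$), discard \emph{all} balls $B_{k_-},\dots,B_{k_+}$, and join the two retained pieces by: a chain of at most $32C_1^2$ balls of radius $\sim\delta(X_0)/C_1$ along a segment inside $B_{k_-}$ from a point of $B_{k_--1}\cap B_{k_-}$ (or from $X$ itself if $k_-=1$) to a point of $B_{k_-}\cap\partial B_{X_0}$, chosen so the segment avoids $\overline{B_{X_0}}$; then at most $64$ balls of radius $\sim\delta(X_0)$ centered on $\partial B_{X_0}$; then symmetrically back into $B_{k_+}$. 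The $O(C_1^2)$ surcharge is thus paid only once, giving $K+O(C_1^2)\le 100(M+C_1^2)(2+\log_2^+\Pi)$, the retained original balls automatically miss $B_{X_0}\supset\tfrac12 B_{X_0}$, and the detour balls miss $\tfrac12 B_{X_0}$ by construction. To salvage your run-by-run scheme you would have to control (number of runs)$\times C_1^2$, which is not possible in general, so you should collapse all runs into this single first-to-last detour.
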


\begin{proof}
Fix $X, Y$ as in the statement and without loss of generality we assume that $\delta(X)\le\delta(Y)$. Use the Harnack chain condition for $\Omega$ to construct the chain of balls $B_1,\dots, B_K$ as in Definition \ref{def:HCC}. If none of $B_k$ meets $B_{X_0}$ then there is nothing to do as this original chain satisfies all the required condition.
Hence we may suppose that some $B_k$ meets $B_{X_0}$. The main idea is that then we can modify the chain of balls by adding some small balls that surround $X_0$. To be more precise, we let $k_-$ and $k_+$ be respectively the first and last ball in the chain meeting $B_{X_0}$. Note that $1\le k_-\le k_+\le K$.

We pick $X_{-}\in B_{k_-}\setminus\overline{B_{X_0}}$:  If $k_-=1$ we let $X_{-}=X$ or if $k_->1$ we pick $X_{-}\in B_{k_--1}\cap B_{k_-}$. Since $B_{k_-}$ meets $B_{X_0}$ then we can find  $Y_{-}\in B_{k_-}\cap\partial B_{X_0}$ such that the open  segment joining $X_{-}$ and $Y_{-}$ is contained in $B_{k_-}\setminus\overline{B_{X_0}}$. Analogously we can find $X_{+}\in B_{k_+}\setminus\overline{B_{X_0}}$ and $Y_{+}\in B_{k_+}\cap\partial B_{X_0}$ such that the open  segment joining $X_{+}$ and $Y_{+}$ is contained in $B_{k_+}\setminus\overline{B_{X_0}}$.

Next set $r=\delta(X)/(16 C_1)$ and let $N_\pm \ge 0$ be such that $N_\pm \le |X_{\pm}-Y_{\pm}|/r < N_\pm +1$. For $j=0,\dots, N_\pm$, let 
$$
B_\pm^j=B(X_{\pm}^j,r),
\qquad
\text{where}\quad
X_{\pm}^j= X_\pm+jr \frac{Y_{\pm}-X_{\pm}}{|Y_{\pm}-X_{\pm}|}
$$
Straightforward arguments show that $N_\pm\le 32 C_1^2$, $X_\pm\in B_\pm^0$, $Y_\pm\in B_\pm^{N_\pm}$, 
$B_\pm^j\cap B_\pm^{j+1} \neq\emptyset$ for every $0\le j\le N_\pm-1$,  and
$$
(32 C_1^2)^{-1}\diam(B_\pm^j)
\le
\dist(B_\pm^j, \partial\Omega)
\le
32 C_1^2\diam(B_\pm^j),
\qquad
B_\pm^j\cap\frac12 B_{X_0}=\emptyset,
$$
for every $0\le j\le N_\pm-1$.

Next, since $X_{\pm}\in\partial B_{X_0}$ we can find a sequence of balls $B^0,\dots, B^{N}$ centered at $\partial B_{X_0}$ and with radius $\delta(X)/16$ (hence $B^j\cap\frac12 B_{X_0}=\emptyset$) so that $N\le 64$, $Y_-\in B^0$, $Y_+\in B^N$, $B^j\cap B^{j+1}\neq\emptyset$ for $0\le j\le N-1$ and $32^{-1}\le \dist(B^j, \partial\Omega)/\diam(B^j)\le 32$.

Finally, to form the desired Harnack chain we concatenate the sub-chains 
$\{B_1, \dots B_{k_--1}\}$, $\{B_{-}^0,\dots B_-^{N_-}\}$, $\{B^0,\dots B^N\}$, $\{B_+^{N},\dots, B_+^0\}$, $\{B_{k_++1},\dots B_K\}$ and the resulting chain have all the desired properties. To complete the proof we just need to observe that the length of the chain is controlled by $K+N_-+N+N_++3\le 100 (M+C_1^2)(2+\log_2^+ \Pi)$.
\end{proof}

The reader may be familiar with the notion of convergence of compact sets in the Hausdorff distance; for general closed sets, not necessarily compact, we use the following notion of convergence, see \cite[Section 8.2]{DS1} for details. (It was pointed out to us that this notion is also referred to as the Attouch-Wets topology, see for example \cite[Chapter 3]{Be}.)
\begin{defn}[Convergence of closed sets]\label{def:cvsets}
	Let $\{E_j\}$ be a sequence of non-empty closed subsets of $\RR^n$, and let $E$ be another non-empty closed subset of $\RR^n$. We say that $E_j$ converges to $E$, and write $E_j \to E$, if
	\[ \lim_{j\to \infty} \sup_{x\in E_j \cap B(0,R)} \dist(x, E) = 0 \]
	and
	\[ \lim_{j\to \infty} \sup_{x\in E \cap B(0,R)} \dist(x, E_j) = 0 \]
	for all $R>0$. By convention, these suprema are interpreted to be zero when the relevant sets are empty.
\end{defn}

We remark that in the above definition, we may replace the balls $B(0,R)$ by arbitrary balls in $\RR^n$. The following compactness property has been proved in \cite[Lemma 8.2]{DS1}:
\begin{lemma}[Compactness of closed sets]\label{lm:cptHd}
	Let $\{E_j\}$ be a sequence of non-empty closed subsets of $\RR^n$, and suppose that there exists an $r>0$ such that $E_j \cap B(0,r) \neq \emptyset$ for all $j$. Then there is a subsequence of $\{E_j\}$ that converges to a nonempty closed subset $E$ of $\RR^n$ in the sense defined above.
\end{lemma}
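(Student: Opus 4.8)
The final statement to prove is Lemma~\ref{lm:cptHd}, the compactness of closed sets under the Attouch--Wets type convergence of Definition~\ref{def:cvsets}. The plan is to extract a convergent subsequence by a diagonal argument, building the limit set $E$ as the set of points that are limits of points of $E_j$ along a fixed subsequence.

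First I would fix an exhaustion of $\mathbb{R}^n$ by closed balls $\overline{B(0,m)}$, $m\in\NN$, and for each $m$ consider the sets $E_j\cap\overline{B(0,m)}$, which are compact (possibly empty for small $m$, but nonempty once $m\ge r$ by hypothesis). By the classical Blaschke selection theorem, the compact subsets of $\overline{B(0,m)}$ form a compact metric space under Hausdorff distance (adding the empty set as an isolated point, or simply noting the relevant sets are eventually nonempty). Hence for each fixed $m$ one can pass to a subsequence along which $E_j\cap\overline{B(0,m)}$ converges in Hausdorff distance to a compact set $F_m\subset\overline{B(0,m)}$. Running this over $m=1,2,3,\dots$ and taking a diagonal subsequence (still denoted $E_j$) gives a single subsequence along which $E_j\cap\overline{B(0,m)}\to F_m$ for every $m$ simultaneously. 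One checks the $F_m$ are nested consistently, i.e. $F_m\cap\overline{B(0,m')}\subset F_{m'}$ for $m'<m$, so that $E:=\overline{\bigcup_m F_m}=\bigcup_m F_m$ (the union is already closed since locally it stabilizes) is a well-defined closed set, and $E\cap\overline{B(0,r)}\supset F_r\neq\emptyset$, so $E$ is nonempty.

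Then I would verify the two conditions of Definition~\ref{def:cvsets} for this $E$ and this subsequence. For the first condition, given $R>0$ pick $m\ge R$; any $x\in E_j\cap B(0,R)\subset E_j\cap\overline{B(0,m)}$ lies within $\sup$-Hausdorff-distance of $F_m\subset E$ that tends to $0$, so $\sup_{x\in E_j\cap B(0,R)}\dist(x,E)\to0$. For the second condition, given $R>0$ and $x\in E\cap B(0,R)$, choose $m$ slightly larger than $R$ so that $x\in F_m$ with room to spare; the Hausdorff convergence $E_j\cap\overline{B(0,m)}\to F_m$ gives points of $E_j$ near $x$, and one must check these points can be taken in (or near) $B(0,R)$ — this is automatic since $x\in B(0,R)$ strictly interior and the approximating points converge to $x$. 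A small amount of care is needed to handle the boundary effects of truncating at $\overline{B(0,m)}$ versus $B(0,R)$, which is the reason for choosing $m$ strictly larger than $R$; this is exactly why the remark after Definition~\ref{def:cvsets} allows replacing $B(0,R)$ by arbitrary balls.

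The only genuinely delicate point — and the main obstacle — is the bookkeeping around truncation: Hausdorff convergence of the truncations $E_j\cap\overline{B(0,m)}$ does not immediately control points of $E_j$ that lie just outside $\overline{B(0,m)}$ but inside some larger ball, nor does it immediately guarantee that the limits $F_m$ patch together into a set whose truncation to a smaller ball is exactly $F_{m'}$ rather than something larger. Resolving this requires comparing the convergence at scales $m$ and $m+1$ and using that a point of $E_j$ within distance $\eps$ of $\overline{B(0,m)}$ is within distance $\eps$ of a point of $E_j\cap\overline{B(0,m+1)}$; I would isolate this as a short sub-lemma. Everything else is routine, and since this statement is quoted from \cite[Lemma 8.2]{DS1}, in the paper one may alternatively simply cite it; I would include the diagonal-argument proof for completeness.
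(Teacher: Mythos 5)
Your proposal is correct in substance, but note that the paper itself contains no proof of this lemma: it is quoted directly from \cite[Lemma 8.2]{DS1}, so there is no in-paper argument to compare against, and your Blaschke-selection-plus-diagonalization construction is a legitimate self-contained substitute. One concrete inaccuracy to fix: the nesting you assert, $F_m\cap\overline{B(0,m')}\subset F_{m'}$ for $m'<m$, is false as stated. For example, take $E_j=\{0\}\cup\{p_j\}$ with $|p_j|=m'+1/j$; then $E_j\cap\overline{B(0,m')}=\{0\}$ for all $j$, so $F_{m'}=\{0\}$, while $F_m$ (for $m\ge m'+1$) contains a point of the sphere $\{|x|=m'\}$. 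What is true, and what your argument actually needs, is the open-ball version: if $x\in F_m$ and $|x|<m'\le m$, then any $x_j\in E_j\cap\overline{B(0,m)}$ with $x_j\to x$ eventually lies in $\overline{B(0,m')}$, so $\dist\bigl(x,E_j\cap\overline{B(0,m')}\bigr)\to 0$ and hence $x\in F_{m'}$. This open-ball nesting is exactly what yields both that $E=\bigcup_m F_m$ is closed (its trace on each open ball $B(0,R)$ coincides with $F_m\cap B(0,R)$ for any integer $m\ge R$) and the containment $E\cap B(0,R)\subset F_m$ needed for the second condition of Definition \ref{def:cvsets}; since this is precisely the ``choose $m$ strictly larger than $R$'' point you flag and promise to isolate as a sub-lemma, the defect is a misstatement rather than a genuine gap, but the sub-lemma should be stated with the open ball (or with a strict inequality of radii), not with $\overline{B(0,m')}$. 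As an aside, a common alternative that avoids all truncation bookkeeping is to apply Arzelà--Ascoli and a diagonal argument to the $1$-Lipschitz functions $x\mapsto\dist(x,E_j)$, obtaining locally uniform convergence along a subsequence to a limit function $g$, and to take $E=\{g=0\}$ (nonempty because each $E_j$ meets $B(0,r)$); both conditions of Definition \ref{def:cvsets} then follow directly from the locally uniform convergence. Either route is fine, as is simply citing \cite[Lemma 8.2]{DS1} as the paper does.
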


Given a Radon measure $\mu$ on $\R^n$ (i.e., a non-negative Borel such that the measure of any compact set is finite) we define
$$
\spt\mu = \overline{\big\{ x\in\RR^n: \mu(B(x,r)) > 0 \text{ for any }r>0 \big\}}.
$$

\begin{defn}\label{def:mu-ADR}
We say that a Radon measure $\mu
$ on $\R^n$  is Ahlfors regular with constant $C\ge 1$, if there exits a constant $C\ge 1$ such that for any $x\in E$ and $0<r<\diam(E)$,
	\[ C^{-1}\, r^{n-1} \leq \mu(B(q,r)) \leq C\, r^{n-1},
	\qquad
	\forall\,x\in\spt\mu,\ 0<r<\diam(\spt\mu).	
	\]
\end{defn}

\begin{defn}\label{def:wcvm}
	Let $\{\mu_j\}$ be a sequence of Radon measures on $\RR^n$. We say $\mu_j$ converge weakly to a Radon measure $\mu_{\infty}$ and write $\mu_j \rightharpoonup \mu_{\infty}$, if 
	\[ \int f d\mu_j \to \int f d\mu_{\infty} \]
	for any $f\in C_c(\R^n)$.
\end{defn}

We finish this section by stating a compactness type lemma for Radon measures which are uniformly doubling and ``bounded below''.

\begin{lemma}[{\cite[Lemma 2.19]{TZ}}]\label{lm:sptcv}
Let $\{\mu_j\}_j$ be a sequence of Radon measures. Let $A_1, A_2>0$ be fixed constants, and assume the following conditions:
\begin{enumerate}[label=\textup{(\roman*)}, itemsep=0.2cm] 
		\item\label{1-lm:sptcv} $0\in \spt\mj$ and $\mj(B(0,1)) \geq A_1$ for all $j$,
		
		\item\label{2-lm:sptcv} For all $j\in\NN$, $q\in \spt \mj$ and $r>0$,
			\begin{equation}\label{unif-doubling}
				\mu_j(B(q,2r))\le A_2\mu_j(B(q,r))
			\end{equation}
\end{enumerate}
If there exists a Radon measure $\minf$ such that $\mj \rightharpoonup \minf$, then $\mu_\infty$ is doubling and
	\begin{equation}\label{eqn:spt-conv}
	 \spt \mj \to \spt \minf,
	 	 \end{equation}
	 	 in the sense of Definition \ref{def:cvsets}.
\end{lemma}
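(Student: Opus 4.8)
The plan is to prove Lemma \ref{lm:sptcv} in two stages: first upgrade the pointwise lower bound and doubling to a genuine two-sided Ahlfors-type control on all balls centered on the supports (with constants depending only on $A_1,A_2$), and then use this uniform control to pass from weak convergence of the measures to convergence of their supports in the Attouch--Wets sense.

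First I would establish the quantitative geometry of the $\mu_j$. Conditions \ref{1-lm:sptcv} and \ref{2-lm:sptcv} together are exactly the hypotheses of the standard ``uniformly doubling plus nondegenerate'' setup. From $0\in\spt\mu_j$, $\mu_j(B(0,1))\ge A_1$, and iterating \eqref{unif-doubling}, one gets for every $q\in\spt\mu_j$ and every $r>0$ a lower bound $\mu_j(B(q,r))\ge c(A_1,A_2)\min\{r,r^s\}$ type estimate on compact ranges of $r$, and in particular a lower bound on $\mu_j(B(q,r))$ for $q$ in any fixed ball and $r$ bounded; the doubling hypothesis gives the matching upper bound $\mu_j(B(q,r))\le C(A_2)\mu_j(B(q,r_0))$ for $r\le r_0$. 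The key qualitative consequences I need are: (a) the total masses $\mu_j(B(0,R))$ are bounded above and below uniformly in $j$ for each fixed $R$, so (after passing to a subsequence) the weak limit $\mu_\infty$ exists and is nontrivial with $\mu_\infty(B(0,1))\gtrsim A_1$ — actually this is given; and (b) there is a modulus-of-continuity-free \emph{uniform lower bound}: for every $R>0$ there is $c_R=c_R(A_1,A_2)>0$ such that $\mu_j(B(q,r))\ge c_R r^{?}$... more precisely, what I really want is the scale-invariant statement that on $B(0,R)$, if $q\in\spt\mu_j$ then $\mu_j(B(q,r))\ge \eta(r)>0$ for all $0<r\le R$, with $\eta$ independent of $j$. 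That $\mu_\infty$ is doubling is then immediate: given $q\in\spt\mu_\infty$ and $r>0$, choose via weak convergence and outer/inner regularity radii $r'<r<2r<r''$ with $\mu_\infty(\partial B(q,\rho))=0$ for $\rho\in\{r',r'',2r',2r''\}$, approximate $q$ by $q_j\in\spt\mu_j$ (using the support-convergence we are about to prove — or, to avoid circularity, just note $\mu_\infty(B(q,r''))=\lim\mu_j(B(q,r''))\le \lim A_2\mu_j(B(q,r')) $ after shifting centers slightly and using that nearby points are in $\spt\mu_j$ by the lower bound), and pass to the limit through \eqref{unif-doubling}. This part is routine measure theory and I would keep it brief.

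The heart is \eqref{eqn:spt-conv}. I would prove the two defining inclusions of Definition \ref{def:cvsets} separately, fixing $R>0$.

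For $\displaystyle\sup_{x\in\spt\mu_j\cap B(0,R)}\dist(x,\spt\mu_\infty)\to 0$: suppose not, so along a subsequence there are $x_j\in\spt\mu_j\cap B(0,R)$ with $\dist(x_j,\spt\mu_\infty)\ge\tau>0$; after a further subsequence $x_j\to x_\infty\in\overline{B(0,R)}$ with $\dist(x_\infty,\spt\mu_\infty)\ge\tau$, hence $\mu_\infty(B(x_\infty,\tau/2))=0$. But the uniform lower bound gives $\mu_j(B(x_j,\tau/4))\ge c>0$ for all large $j$, and for $j$ large $B(x_j,\tau/4)\subset B(x_\infty,\tau/2)$, so by weak convergence (using a continuous bump between these balls, or a radius of $\mu_\infty$-measure zero) $\mu_\infty(B(x_\infty,\tau/2))\ge c>0$, a contradiction.

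For $\displaystyle\sup_{x\in\spt\mu_\infty\cap B(0,R)}\dist(x,\spt\mu_j)\to 0$: again suppose not, giving $y_j\to y_\infty\in\spt\mu_\infty\cap\overline{B(0,R)}$ with $\dist(y_j,\spt\mu_j)\ge\tau>0$, i.e. $\mu_j(B(y_j,\tau))=0$, hence $\mu_j(B(y_\infty,\tau/2))=0$ for $j$ large. Choosing $0<\rho<\tau/2$ with $\mu_\infty(\partial B(y_\infty,\rho))=0$, weak convergence forces $\mu_\infty(B(y_\infty,\rho))=\lim\mu_j(B(y_\infty,\rho))=0$, contradicting $y_\infty\in\spt\mu_\infty$. (Alternatively one cites \cite[Lemma 8.2]{DS1}/\cite[Section 8.2]{DS1}, but I would rather give the short direct argument.)

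The main obstacle — really the only subtle point — is the \textbf{uniform (in $j$) lower mass bound} used in the first inclusion: one must get $\mu_j(B(q,r))\ge c(A_1,A_2,R)>0$ for \emph{all} $q\in\spt\mu_j\cap B(0,R)$ and all $r$ down to a fixed scale, not merely $\mu_j(B(q,r))>0$. This is where hypotheses \ref{1-lm:sptcv} and \ref{2-lm:sptcv} are combined: start from $\mu_j(B(0,1))\ge A_1$, walk from $0$ to $q$ through a bounded Harnack-type chain of overlapping balls of comparable radius (at most $\sim R$ of them), at each step losing only a factor controlled by $A_2$ via \eqref{unif-doubling} and its reverse consequence (if $B(q,r)\cap\spt\mu_j\ni z$ and $B(q,r)\subset B(z,2r)$, then $\mu_j(B(q,r))\ge \mu_j(B(z,r/2))\ge A_2^{-2}\mu_j(B(z,2r))\ge A_2^{-2}\mu_j(B(q,r))$... arranged correctly this propagates a definite lower bound). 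I expect this chaining estimate to be the step requiring the most care, and I would state it as a short preliminary claim with constants tracked only up to dependence on $A_1,A_2$; everything after it is soft.
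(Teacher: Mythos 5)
The paper does not actually prove this lemma; it is quoted verbatim from \cite[Lemma 2.19]{TZ}, so your proposal can only be measured against the standard argument (which is also the one in \cite{TZ}). Your overall architecture is that argument: a uniform-in-$j$ lower mass bound for balls centered on $\spt\mu_j$, combined with the two Portmanteau-type limiting inequalities, gives the two inclusions of Definition \ref{def:cvsets}, and the doubling of $\mu_\infty$ is then a routine limiting estimate. Your proofs of the two inclusions and of the doubling passage are fine. The problem is that the single quantitative ingredient everything rests on --- the uniform lower bound --- is exactly the step you defer, and the mechanism you sketch for it does not work. Hypothesis \eqref{unif-doubling} is only assumed at points of $\spt\mu_j$, and nothing forces the supports to be connected: for $\mu_j=\delta_0+\delta_q$ with $|q|\sim R$, which satisfies \ref{1-lm:sptcv} and \ref{2-lm:sptcv}, every intermediate ball of a ``Harnack-type chain'' from $0$ to $q$ has zero mass, so no per-step loss of a factor $A_2$ can propagate positivity along the chain; and the displayed string of inequalities in your parenthetical begins and ends with $\mu_j(B(q,r))$, so as written it proves nothing.

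Fortunately the correct argument is shorter than the chaining you envisage and needs no connectivity: since $q\in\spt\mu_j$, \eqref{unif-doubling} may be iterated at the point $q$ itself through increasing scales. Given $q\in\spt\mu_j\cap B(0,R)$ and $0<r\le R$, choose the least integer $k$ with $2^k r\ge R+1$; then $B(0,1)\subset B(q,2^k r)$, hence
\begin{equation*}
\mu_j(B(q,r))\ \ge\ A_2^{-k}\,\mu_j\big(B(q,2^k r)\big)\ \ge\ A_2^{-k}\,\mu_j(B(0,1))\ \ge\ A_1\,A_2^{-k},
\qquad k\le \log_2\!\big(\tfrac{2(R+1)}{r}\big),
\end{equation*}
which is the bound $c(A_1,A_2,R,r)>0$ uniform in $j$ that your first inclusion uses. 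With this claim in place your contradiction arguments for both inclusions are complete. For the doubling of $\mu_\infty$, the clean way to avoid the circularity you flag is to prove the support convergence first (your second-inclusion argument already shows every $q\in\spt\mu_\infty$ is a limit of points $q_j\in\spt\mu_j$) and then pass to the limit in \eqref{unif-doubling} centered at $q_j$, using radii $s\in(2r,2r+\varepsilon)$ and $t<r$ with $\mu_\infty(\partial B(q,s))=\mu_\infty(\partial B(q,t))=0$; this yields $\mu_\infty(B(q,2r))\le A_2^{2}\,\mu_\infty(B(q,r))$ or a similarly iterated constant.
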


\section{Compactness argument}\label{comp-tt}

To prove the Main Theorem (see page \pageref{thm:main}) we will proceed by 
contradiction. First we discuss the constant $N$. 
Recall that, as noted above, the assertion that \ref{1-thm:hmu} implies \ref{3-thm:hmu} in Theorem \ref{thm:hmu}
extends routinely to all constant coefficient second order elliptic operators; alternatively, this fact follows
from the results of \cite{HMT1}
as \eqref{HMT-CM} and \ref{H1} holds trivially in the constant coefficient case.
Thus given values of the allowable constants 
$M, C_1, C_{AR},\Lambda/\lambda$, 
$C_0,\theta$, 
let $\Omega\subset\mathbb{R}^n$, $n\ge 3$, be a uniform domain with constants $M, C_1$, 
whose boundary is Ahlfors regular with constant $C_{AR}$, and let
$L = -\divg(\mathcal{A}_0\nabla)$ be a constant coefficient elliptic operator where the constant 
real symmetric matrix 
$\mathcal{A}_0$ satisfies \eqref{def:UE} with ellipticity constants $\lambda, \Lambda$,
and such that the corresponding elliptic measure $\omega_{L} \in A_{\infty}(\sigma)$ with 
constants $C_0$ and $\theta$.  Then
there exists a constant $N_0=N_0(M, C_1, C_{AR},\Lambda/\lambda,C_0,\theta)$  
such that  $\Omega$ satisfies the exterior corkscrew condition 
with constant $N_0$. We underline that this $N_0$ depends on the ratio of the ellipticity constants rather than the matrix $\mathcal{A}_0$ per se. 

With this in mind, set
 \begin{equation}\label{N}
 N=4N_0(4M,2C_1, 2^{5(n-1)}C_{AR}^2,\Lambda/\lambda, C_0  C_2 C_{AR}^{4\theta} 2^{8(n-1)\theta},\theta)
 \end{equation}
 where the constant 
$C_2=C_2(M,C_1, C_{AR}, \Lambda/\lambda)$ can be found in Remark \ref{rem:doubling:needed}.


We now state the contradiction hypothesis: 
for fixed $n\ge 3$, we
suppose that there exists a set of allowable constants $M, C_1, C_{AR}>1$, 
$\Lambda \ge \lambda = 1$,
$C_0>1$ and $0<\theta<1$, and a sequence $\epsilon_j$ (with $\epsilon_j \to 0$ as $j\to\infty$), so that the following holds:


\begin{enumerate}[label=\textup{\textbf{Assumption (\alph*):}},ref=\textup{\textbf{Assumption (\alph*)}}, itemsep=0.2cm, wide, leftmargin=1cm] 

	\item\label{1-assump} For each $j$ there is a bounded domain $\Omega_j\subset \mathbb{R}^n$, which is  uniform with constants $M, C_1$ and whose boundary is Ahlfors regular with constant $C_{AR}$. Also, there is an elliptic matrix  $\mathcal{A}_j$ defined on $\Oj$, with ellipticity constants $\lambda=1$ and $\Lambda$, and we write $L_j = -\divg(\mathcal{A}_j\nabla)$.
	

	\item\label{2-assump} $\C(\Omega_j,\mathcal{A}_j)<\epsilon_j$ (see \eqref{def:smallCarleson}).

	\item\label{3-assump} The elliptic measure of the operator $L_j$ in $\Omega_j$ is of class $A_{\infty}$ with respect to the surface measure $\sj=\HH^{n-1}|_{\partial\Omega_j}$ with constants $C_0$ and $\theta$ (see Definition \ref{def:AinftyHMU}).

\item[\textup{\textbf{Contrary to conclusion:}}] For each $j$ there is $q_j\in\partial\Omega_j$ and $0<r_j<\diam(\pOj)$ such that $\Omega_j$ has no exterior corkscrew point with constant $N$ (as in \eqref{N}). That is, there is no ball of radius $r_j/N$ contained in $B(q_j,r_j)\setminus\overline{\Omega_j}$.
\end{enumerate}


Our goal is to obtain a contradiction and as a consequence our Main Theorem will be proved.
Without loss of generality we may assume $q_j = 0$ and $r_j = 1$ for all $j$, hence $\diam(\partial\Omega_j)>1$. Otherwise,  we just replace the domain $\Omega_j$ by $(\Omega_j - q_j)/r_j$, and replace the elliptic matrix $\wcalA_j(\cdot)$ by $\wcalA_j(q_j + r_j \cdot)$. Note that the new domain and matrix have the same allowable constants, in particular the corresponding $A_{\infty}$ constants stay the same by the scale-invariant nature of Definition \ref{def:AinftyHMU}; moreover after rescaling, the above \ref{2-assump} is still satisfied:
\[ \mathcal{C}\left(\frac{\Omega_j - q_j}{r_j}, \wcalA_j(q_j + r_j \cdot) \right) = \mathcal{C}(\Omega_j, \wcalA_j)<\epsilon_j. \]

\section{Limiting domains}\label{sect:blowup}
We want to use a compactness argument similar to the blow-up argument in \cite{TZ}. The crucial difference is that in \cite{TZ}, the elliptic operator tends to a constant-coefficient operator as we zoom in on the boundary and blow up the given domain; whereas here we need to work with a \textit{sequence} of domains and their associated elliptic operators. In particular the geometric convergence of domains does not come for free, and more work is needed to analyze the limiting domain.

To be more precise, getting to the point where we can apply Theorem \ref{thm:hmu} (more precisely, its extension to the elliptic operators with constants coefficients or alternatively \cite{HMT1} applied again to constant coefficient operators) requires showing first that if $\Oinf$ is a ``limiting domain''  of the domains $\{\Omega_j\}$'s, then $\Oinf$ is an unbounded or bounded uniform domain with Ahlfors regular boundary. To accomplish this we also need to find the limit of the Green functions. Once we have this, to show that $\omega_{L_\infty} \in A_{\infty}(\sigma_{\infty})$ for the limiting domain $\Oinf$ and the limiting operator $L_{\infty}$, we need to construct the elliptic measure $\omega_{L_\infty}^Z$ for any $Z\in\Oinf$ as a limiting measure compatible with the procedure. We will also show that $L_\infty$ is an elliptic operator with constants coefficients.

Throughout the rest of paper we follow the following conventions in terms of notations:
\begin{itemize}\itemsep=0.2cm
\item For any $Z\in\Omega_j$ we write $\delta_j(Z) = \dist(Z,\partial\Omega_j)$.

\item For any $q\in\partial\Omega_j$ and $r\in (0, \diam( \partial\Omega_j))$, we use $A_j(q,r)$ to denote a corkscrew point in $\Omega_j$ relative to  $B(q,r)\cap\partial\Omega_j$, i.e.,
		\begin{equation} \label{nta-pt}
			B\left(A_j(q,r), \frac{r}{M} \right) \subset B(q,r) \cap \Oj.
		\end{equation}

\end{itemize}

\subsection{Geometric limit}
Since $\diam(\pOj) > 1$, modulo passing to a subsequence, one of the following two scenarios occurs:

\begin{enumerate}[label=\textup{\textbf{Case \Roman*:}} , ref=\textup{\textbf{Case \Roman*}}, itemsep=0.2cm, align=left, leftmargin=2cm] 
	
\item\label{CaseI} $\diam(\Oj) = \diam(\pOj) \to \infty $ as $j\to\infty$.

\item\label{CaseII} $\diam(\Oj)=\diam(\pOj) \to R_0 \in [1,\infty)$ as $j\to\infty$.

\end{enumerate}

Therefore if $\Oj$ ``converges'' to a limiting domain $\Oinf$, respectively \ref{CaseI} and \ref{CaseII} indicate that $\Oinf$ is unbounded or bounded.

Let $X_j\in \Omega_j$ be a corkscrew point relative to  $B(0,\diam(\Oj)/2)\cap\partial\Omega_j$, then
\begin{equation}\label{eq:polej}
	|X_j| \sim \wdj(\wXj) \sim \diam(\Oj),
\end{equation} 
with constants depending on the uniform constant $M$.
Let $G_j$ be the Green function associated with $\Oj$ and the operator $L_j = -\divg(\wcalA_j \nabla)$, and $\{\oj^X\}_{X\in\Oj}$ be the corresponding elliptic measure. 
In  \ref{CaseI}  we have
\begin{equation}\label{eq:temp1}
	|\wXj| \sim \wdj(\wXj) \sim \diam(\Oj) \to \infty,
\end{equation}
i.e., the poles $\wXj$ tend to infinity eventually. We let
\begin{equation}\label{def:ujcasei}
	u_j(Z) = \frac{G_j(X_j,Z)}{\oj^{X_j}(B(0,1))}.
\end{equation}
In \ref{CaseII}, we may assume that $\diam(\Oj) \sim R_0$ for all $j$ sufficiently large (one could naively rescale again so that $R_0=1$, should that be the case one may lose the property that $r_j=1$ for all $j$). Hence, there are constants $0<c_1<c_2$ such that
\begin{equation}\label{eq:wXjcase2}
	c_1 R_0 \leq \wdj(\wXj) \le |\wXj| \leq c_2 R_0 \quad \text{ for all } j \text{ sufficiently large}.
\end{equation}
Thus modulo passing to a subsequence, $\wXj$ converges to some point $X_0$ satisfying 
\begin{equation}\label{eq:polecase2}
	c_1 R_0 \leq |X_0| \leq c_2 R_0.
\end{equation}
Note that \eqref{eq:wXjcase2} and \eqref{eq:polecase2} in particular imply that for any $\rho$ sufficiently small (depending on $R_0$ and $c_1, c_2$), the ball $B(X_0,\rho)$ is contained in $\Oj$ and $\dist(B(X_0,\rho),\pOj)\ge c_1 R_0/2$.
In this case we let
\begin{equation}\label{def:ujcaseii}
	u_j(Z) = G_j(X_j,Z).
\end{equation}

Our next goal is to describe what happens with the objects in question as we let $j\to\infty$. 
This is done in Theorems \ref{thm:pseudo-blow-geo}, \ref{thm:AW11}, \ref{thm:blow-ana-pole} below.

\begin{theorem}\label{thm:pseudo-blow-geo}
\leavevmode
Under  \ref{1-assump}, and using the notation above, we have the following properties (modulo passing to a subsequence which we relabel):
\begin{enumerate}[label=\textup{(\arabic*)}, itemsep=0.2cm] 
	\item\label{1-thm:pseudo-blow-geo} \ref{CaseI}: there is a function $u_\infty\in C(\R^n)$ such that $\uj \to u_\infty$ uniformly on compact sets; moreover $\nabla \uj\rightharpoonup\nabla u_\infty$ in $L^2_{\rm loc}(\R^n)$. 
	
\item \label{2-thm:pseudo-blow-geo} \ref{CaseII}: there is a function $u_\infty \in C(\R^n\setminus \{X_0\})$ such that $\uj \to u_{\infty}$ uniformly on compact sets in $\R^n \setminus\{X_0\}$ and $\nabla \uj \rightharpoonup \nabla u_{\infty}$ in $L_{\rm loc}^2(\R^n \setminus \{X_0\})$.

\item \label{3-thm:pseudo-blow-geo} Let $\Oinf=\{Z\in\R^n: u_\infty >0\}$\footnote{In \ref{CaseII}, see Remark \ref{rem:deawffr} part \ref{2-rem:deawffr} we extend $u_\infty$ to all of $\R^n$ by setting $u_\infty(X_0)=+\infty$.}. Then $\overline{\Oj}\to\overline{\Omega_\infty}$ and $\pOj\to \pOinf$, in the sense of Definition \ref{def:cvsets}. Moreover, $\Oinf$ is an unbounded set with  unbounded  boundary in \ref{CaseI}, and it is bounded with diameter $R_0 \geq 1$ in \ref{CaseII} .
	
	\item \label{4-thm:pseudo-blow-geo} $\Oinf$ is a nontrivial uniform domain with constants $4M$ and $2C_1$. 
	
	\item \label{5-thm:pseudo-blow-geo}  There is an Ahlfors regular measure $\mu_\infty$ with constant $2^{2(n-1)}C_{AR}$ such that $\sj \rightharpoonup \mu_\infty$. Moreover, $\spt\mu_\infty=\pOinf$. In particular, this implies that 
	\begin{equation}  \label{comp-mu-H}
	2^{-3(n-1)}C_{AR}^{-1} \minf \leq \mathcal{H}^{n-1}|_{\pOinf} \leq 2^{3(n-1)}C_{AR} \minf. 
	\end{equation}
	and hence $\partial\Omega_\infty$ is Ahlfors regular with constant $2^{5(n-1)}C_{AR}^2$.
\end{enumerate}

\end{theorem}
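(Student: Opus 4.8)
\textbf{Proof proposal for Theorem \ref{thm:pseudo-blow-geo}.}

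The plan is to establish the six assertions roughly in the order listed, building each on its predecessors. First, for \ref{1-thm:pseudo-blow-geo} and \ref{2-thm:pseudo-blow-geo}, I would use the normalization of $u_j$ together with the interior De Giorgi--Nash--Moser estimates and the boundary Hölder estimate (Lemma \ref{lem:vanishing}) to get uniform local bounds and equicontinuity for $\{u_j\}$ away from the poles. The key point is that $u_j$ is a normalized Green function with $L_j u_j = 0$ away from $X_j$, it vanishes continuously on $\partial\Omega_j$, and the CFMS estimate (Lemma \ref{CFMS}) together with doubling (Remark \ref{rem:doubling:needed}) and the $A_\infty$ hypothesis pins down the normalizing constant $\omega_j^{X_j}(B(0,1))$ so that $u_j$ is bounded above and below on fixed compact sets meeting $\Omega_j$. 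In \ref{CaseI} the pole escapes to infinity so $u_j$ is a nonnegative solution on all of $\Omega_j\cap B(0,R)$ for $R$ fixed and $j$ large; in \ref{CaseII} one must excise a fixed ball $B(X_0,\rho)$. Arzelà--Ascoli (with a diagonal argument over $R\to\infty$) then produces the subsequential limit $u_\infty$, continuous on $\R^n$ (resp. $\R^n\setminus\{X_0\}$), extended by $0$ outside the domains; the weak $L^2_{\rm loc}$ convergence of gradients follows from Caccioppoli applied to $u_j$.

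Next, for \ref{3-thm:pseudo-blow-geo}, I would show that $\{Z : u_\infty(Z)>0\}$ is exactly the geometric limit of the $\Omega_j$. The inclusion relating the positivity set of $u_\infty$ to $\liminf \Omega_j$ uses the interior Harnack chain condition plus the uniform lower bound on $u_j$ at corkscrew points (Lemma \ref{ellip-lb} and Lemma \ref{harn-princ}): if $Z$ has a neighborhood eventually inside $\Omega_j$ and connected by a controlled Harnack chain to the normalizing scale, then $u_j(Z)$ stays bounded below, so $u_\infty(Z)>0$; conversely if $Z$ is far from $\Omega_j$ infinitely often then $u_j\equiv 0$ near $Z$, so $u_\infty(Z)=0$. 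For boundary points one invokes the boundary Hölder decay (Lemma \ref{lem:vanishing}) to see $u_\infty$ vanishes on $\partial\Omega_\infty$ and is positive just inside, which forces $\partial\{u_\infty>0\}$ to be the limit of $\partial\Omega_j$. Compactness of closed sets (Lemma \ref{lm:cptHd}) guarantees a convergent subsequence of $\overline{\Omega_j}$ and of $\partial\Omega_j$ in the Attouch--Wets sense, and the argument above identifies the limits as $\overline{\Omega_\infty}$ and $\partial\Omega_\infty$; the unbounded/bounded dichotomy and the value $R_0$ of the diameter in \ref{CaseII} come directly from the case split and the continuity of $\diam$ under this convergence. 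Assertion \ref{4-thm:pseudo-blow-geo} is then a soft stability statement: the interior corkscrew points $A_j(q,r)$ and the Harnack chains (using Lemma \ref{lemm:NC-avoid} to keep chains away from the poles when needed) pass to the limit with only a bounded degradation of constants, yielding uniformity of $\Omega_\infty$ with constants $4M$, $2C_1$; nontriviality follows because a corkscrew ball of definite size survives in the limit.

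Finally, for \ref{5-thm:pseudo-blow-geo}, I would note that Ahlfors regularity of $\partial\Omega_j$ with uniform constant $C_{AR}$ gives uniform mass bounds on $\sigma_j$ on every fixed ball, so $\{\sigma_j\}$ is a bounded sequence of Radon measures and (passing to a further subsequence) $\sigma_j\rightharpoonup\mu_\infty$ for some Radon measure $\mu_\infty$. Standard lower/upper semicontinuity of mass under weak convergence, combined with the uniform upper Ahlfors bound, transfers the two-sided bound to $\mu_\infty$ with a controlled loss (the stated $2^{2(n-1)}C_{AR}$); in particular the $\sigma_j$ are uniformly doubling so Lemma \ref{lm:sptcv} applies and gives $\spt\sigma_j\to\spt\mu_\infty$, i.e.\ $\partial\Omega_j\to\spt\mu_\infty$ in the sense of Definition \ref{def:cvsets}. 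Since we already know $\partial\Omega_j\to\partial\Omega_\infty$, uniqueness of limits in this topology forces $\spt\mu_\infty=\partial\Omega_\infty$. Then $\mu_\infty$ is an Ahlfors regular measure whose support is $\partial\Omega_\infty$; comparing $\mu_\infty$ with $\mathcal{H}^{n-1}|_{\partial\Omega_\infty}$ on Whitney-type balls (both are Ahlfors regular on the same set, hence mutually comparable with dimensional constants) yields \eqref{comp-mu-H} and the Ahlfors regularity of $\partial\Omega_\infty$ with constant $2^{5(n-1)}C_{AR}^2$.

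\textbf{Main obstacle.} I expect the delicate part to be the two-sided control of the normalizing constant $\omega_j^{X_j}(B(0,1))$ and, relatedly, the uniform lower bound for $u_j$ on compact subsets of the (a priori unknown) limiting domain: this is where the $A_\infty$ hypothesis, the doubling property at the scale of the pole (Remark \ref{rem:doubling:needed}), and the CFMS comparison (Lemma \ref{CFMS}) must be combined carefully, and where one must be sure the estimates are genuinely scale-invariant and uniform in $j$ so that nothing degenerates in the limit. The bookkeeping of how each allowable constant degrades (the factors of $4M$, $2C_1$, powers of $2$ times $C_{AR}$) is routine but must be tracked precisely because these exact constants feed into the definition of $N$ via $N_0$.
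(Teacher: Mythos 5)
Your outline follows the paper's own route essentially step by step (compactness of $\{u_j\}$ via interior estimates, Lemma \ref{lem:vanishing}, Arzel\`a--Ascoli and Caccioppoli; identification of $\Omega_\infty=\{u_\infty>0\}$ with the limits of $\overline{\Omega_j}$ and $\partial\Omega_j$ via Harnack chains avoiding the pole and the uniform H\"older vanishing; stability of corkscrews and chains for \ref{4-thm:pseudo-blow-geo}; weak-$*$ limits of $\sigma_j$, Lemma \ref{lm:sptcv} and a Mattila-type comparison for \ref{5-thm:pseudo-blow-geo}). The one step whose justification as written would fail is the unboundedness assertion in \ref{3-thm:pseudo-blow-geo} for \ref{CaseI}: you say it "comes directly from the case split and the continuity of $\diam$ under this convergence," but the diameter is \emph{not} continuous under the convergence of Definition \ref{def:cvsets} for unbounded sets, and indeed the paper warns that $\diam(\partial\Omega_j)\to\infty$ does not by itself yield $\diam(\partial\Omega_\infty)=\infty$: a priori the boundaries could collapse into a bounded set even though the domains grow. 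Unboundedness of $\Omega_\infty$ uses connectedness (a bounded limit would trap $\overline{\Omega_j}$ in a fixed ball for large $j$), and unboundedness of $\partial\Omega_\infty$ genuinely requires the uniform Ahlfors regularity of the $\sigma_j$: if $\partial\Omega_\infty\subset B(0,R)$, then for $k$ large and then $j$ large one gets $\partial\Omega_j\cap\overline{B(0,kR)}\subset B(0,2R)$, while $0\in\partial\Omega_j$ and $\diam(\partial\Omega_j)>kR$ force $\sigma_j(B(0,kR)\cap\partial\Omega_j)\gtrsim (kR)^{n-1}$, contradicting the upper bound $\lesssim R^{n-1}$. Some quantitative argument of this kind must be supplied; an appeal to continuity of the diameter is not available.

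Two smaller points. First, you invoke the $A_\infty$ hypothesis (and doubling) to control the normalizing constant $\omega_j^{X_j}(B(0,1))$; this is unnecessary and slightly at odds with the statement, which assumes only \ref{1-assump}: Lemma \ref{CFMS} alone (applied at $q=0$, $r=1$, pole $X_j$, using symmetry of $G_j$) gives $u_j(A_j(0,1))\sim 1$ in \ref{CaseI}, and in \ref{CaseII} there is no normalization at all, the lower bounds near the pole coming from \eqref{eqn:glb}; the paper emphasizes that the whole theorem is purely geometric. Second, in \ref{5-thm:pseudo-blow-geo} the phrase that $\mu_\infty$ and $\mathcal{H}^{n-1}|_{\partial\Omega_\infty}$ are "both Ahlfors regular on the same set, hence comparable" is circular, since the Ahlfors regularity of $\mathcal{H}^{n-1}|_{\partial\Omega_\infty}$ is precisely the conclusion; the correct step is the standard density comparison (e.g.\ \cite[Theorem 6.9]{Ma}), which from the Ahlfors regularity of $\mu_\infty$ alone yields \eqref{comp-mu-H} and then the regularity of the boundary.
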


\begin{remark}
Note that this result is purely geometric. The proof only uses   \ref{1-assump}, which states the geometric characters of domains $\Omega_j$ (i.e., they are uniform domains with Ahlfors regular boundaries) and the ellipticity of the matrix operator $\mathcal{A}_j$. The other assumptions are irrelevant for this.
\end{remark}


\begin{proof}[Proof of \ref{1-thm:pseudo-blow-geo} in Theorem \ref{thm:pseudo-blow-geo}]
Let $R>1$ and  note that for $j$ large enough (depending on $R$) we have that $\wXj \notin B(0,4R)$ since by \eqref{eq:temp1} 
\[ 
|\wXj| = |\wXj - 0| \geq \wdj(\wXj) \sim \diam(\Oj) \to \infty,
\quad\text{as }j\to\infty.
\]	
In particular, $L_j\uj=0$ in $B(0,4R)\cap\Oj$ in the weak sense. Recall that all our domains $\Omega_j$ have Ahlfors regular boundary and hence all boundary points are Wiener regular. This in turn implies that $\uj$ is a non-negative $L$-solution on $B(0,4R)\cap\Oj$ which vanishes continuously on $B(0,4R)\cap\partial\Oj$.

On the other hand, $0\in\partial\Oj$ and, using our convention \eqref{nta-pt}, $ \wAj(0,1)$ is a corkscrew point relative to $B(0,1)\cap\partial\Oj$ in the domain $\Oj$.  
Thus, by Lemma \ref{CFMS} 
			\begin{equation}\label{eq:pjzj}
				\uj(\wAj(0,1)) \sim 1.  
			\end{equation} 
We can then invoke Lemma \ref{harn-princ}, the fact that $\wAj(0,2R)\in\Oj$ is a corkscrew point relative to  $B(0,2R)\cap\partial\Omega_j$ for the domain $\Oj$,  Harnack's inequality, and \eqref{eq:pjzj} to obtain 
			\begin{equation} \label{uj-loc-bdd}
			\sup_{Z\in \Oj\cap B(0,2R)} \uj(Z) 
			\leq C \uj(\wAj(0,2R))
			\le C_R \uj(\wAj(0,1)) 
			\leq 
			C_R.  
			\end{equation}
			Extending $\uj$ by 0 outside of $\Oj$ we conclude that the sequence $\{\uj\}_{j\geq j_0}$ is uniformly bounded in $\overline{B(0,R)}$ for some $j_0$ large enough. Since for each $j$, 
$\wcalA_{j}$ has ellipticity constants bounded below by $\lambda=1$ and above by $\Lambda$, and $\Oj$ is uniform and satisfies the CDC (as $\pOj$ is Ahlfors regular) with the same constants as $\Omega_j$, then combining Lemma \ref{lem:vanishing} with the DeGiorgi-Nash-Moser estimates we conclude that the sequence $\{\uj\}_j$ is equicontinuous on $\overline{B(0,R)}$ (in fact uniformly H\"older continuous with same exponent). 
			Using Arzela-Ascoli combined with a diagonalization argument applied on a sequence of balls with radii going to infinity, we produce $u_\infty\in C(\R^n)$ and a subsequence (which we relabel) such that $\uj \to u_\infty$ uniformly on compact sets of $\RR^n$.

As observed before,  $\uj$ is a non-negative $L$-solution on $B(0,4R)\cap\Oj$ which vanishes continuously on $B(0,4R)\cap\partial\Oj$ and which has been extended by 0 outside of $\Oj$. Thus it is a positive $L$-subsolution on $B(0,4R)$ and we can use Caccioppoli's inequality along with \eqref{uj-loc-bdd} to conclude that
\begin{equation}\label{eqn:3.1A}
\int_{B(0,R)}|\nabla \uj|^2\, dZ 
\le
C\,R^{-2} \int_{B(0,2R)}|\uj|^2\, dZ
\le
C_R. 
\end{equation}
This and \eqref{uj-loc-bdd} allow us to conclude that
\begin{equation}\label{eqn:3.4}
	\sup_j \|\uj\|_{W^{1,2}(B(0,R))} \leq C_R <\infty.
\end{equation}		
Thus, there exists a subsequence (which we relabel) which converges weakly in $W^{1,2}_{\rm loc}(\RR^n)$. Since we already know that $\uj \to u_\infty$ uniformly on compact sets of $\RR^n$, we can use again \eqref{uj-loc-bdd} to easily see that $u_\infty\in W^{1,2}_{\rm loc}(\RR^n)$, 
 and $\nabla \uj \rightharpoonup \nabla u_{\infty} $ in $L^2_{\rm loc}(\RR^n)$. This completes the proof of \ref{1-thm:pseudo-blow-geo} in Theorem \ref{thm:pseudo-blow-geo}.	
\end{proof}

\begin{proof}[Proof of \ref{2-thm:pseudo-blow-geo} in Theorem \ref{thm:pseudo-blow-geo}]
Recall that in this case $\wXj\to X_0$ as $j\to\infty$. For any $0<\rho\le c_1R_0/2$ and for all $j$ large enough we have
\begin{equation}\label{poleawaycase2}
	B\left(\wXj,\frac{\rho}{2} \right) 
	\subset 
	B(X_0,\rho) 
	\subset 
	B(\wXj,2\rho)
	\subset 
	\overline{B(\wXj,2\rho)} 
	\subset 
	\overline{B(\wXj,\wdj(\wXj)/2)}
	\subset \Oj, 
\end{equation} 
where we have used \eqref{eq:wXjcase2}. Moreover, for $j$ sufficiently large,
\begin{equation}\label{eq:temp3}
	\dist(B(\wXj,2\rho),\pOj) > \frac{c_1 R_0}{2}.
\end{equation}
For any $Z\in \Oj \setminus B(\wXj,\rho/4)$, using \eqref{def:ujcaseii} and \eqref{eqn:gub} it follows that
\begin{equation}\label{eq:ujboundcase2}
	\uj(Z) \leq \frac{C}{|Z-\wXj|^{n-2}} \leq \frac{4^{n-2}C}{\rho^{n-2}}.
\end{equation}
Extending $\uj$ by $0$ outside $\Oj$ the previous estimate clearly holds for every $Z\in \R^n\setminus\Oj$. Thus $\sup_j\|\uj\|_{L^\infty(\R^n \setminus B(X_0,\rho))}\le C(\rho)$. Moreover, as in \ref{CaseI}, the sequence is also equicontinuous (in fact uniformly H\"older continuous). Using Arzela-Ascoli theorem with a diagonalization argument, we can find $u_\infty \in C(\R^n\setminus \{X_0\})$ and a subsequence (which we relabel) such that $\uj \to u_{\infty}$ uniformly on compact sets of $\R^n\setminus\{X_0\}$. 

Let $0<R\leq \sup_{j\gg 1} \diam(\Oj) \sim R_0$. We claim that
\begin{equation}\label{eq:L2gradcase2}
	\int_{B(0,R) \setminus B(X_0,\rho)} |\nabla \uj|^2 dZ \leq C(R,\rho) <\infty.
\end{equation}

To prove this, we first  take arbitrary $q\in\pOj$ and $s$ such that $0<s\le  \wdj(\wXj)/5 \sim R_0$. In particular, if $0<\rho<c_1 R_0/10\leq \wdj(\wXj)/10$ it follows that $B(q,4s)\subset\R^n\setminus B(\wXj,2 \rho)\subset 
\R^n\setminus B(X_0,\rho)$. Thus, proceeding as in \ref{CaseI}, $\uj$ is non-negative subsolution on $B(q, 2s)$ and we can use Caccioppoli's inequality and \eqref{eq:ujboundcase2} to obtain 
\begin{align}\label{eq:L2gradbd}
	\int_{B(q,s)\setminus B(X_0,\rho)} |\nabla \uj|^2 dZ 
	=
	\int_{B(q,s)} |\nabla \uj|^2 dZ 
		\leq \frac{C}{s^2} \int_{B(q,2s )} |\uj(Z)|^2 dZ \lesssim \frac{s^{n-2}}{\rho^{2(n-2)}}.
\end{align}

Note that the previous estimate, with $q=0$ and $s=R$, gives our claim \eqref{eq:L2gradcase2} when $0<R\leq \wdj(\wXj)/5$. 

Consider next the case $R_0\sim \wdj(\wXj)/5 < R\le \sup_{j\gg 1} \diam(\Oj)\sim R_0$.  Note first that the set $\Theta_j:=\{Z\in \Oj: \wdj(Z) < \wdj(\wXj)/25\}$ can be covered by a family of balls $\{B(q_i,\wdj(\wXj)/5)\}_i$ with $q_i\in\partial\Omega$ and whose cardinality is uniformly bounded (here we recall that $\wdj(\wXj)\sim  \diam(\Oj)$), Thus, \eqref{eq:L2gradbd} applied to these each ball in the family yields
\begin{equation}\label{eq:temp4}
	\int_{\left( B(0,R) \setminus B(X_0,\rho) \right) \cap \Theta_j } |\nabla \uj|^2 dZ \leq 
\sum_i
	\int_{B(q_i,\wdj(\wXj)/5) \setminus B(X_0,\rho)} |\nabla \uj|^2 dZ \leq 
	C(R, \rho) <\infty.
\end{equation} 
On the other hand, the set $\{Z\in \Oj\setminus B(\wXj,\rho/2): \wdj(Z) \geq \wdj(\wXj)/25\}$ can be covered by a family of balls $\{B_i\}_i$ so that $r_{B_i}=\rho/16$, $4B_i \subset\Oj\setminus 
B(\wXj,\rho/4)$. Moreover, the cardinality of the family is uniformly bounded depending on dimension and the ratio $\diam(\Oj)/\rho\sim R_0/\rho$. Using \eqref{poleawaycase2}, Caccioppoli's inequality in each $B_i$ since $4B_i \subset\Oj\setminus B(\wXj,\rho/4)$, and \eqref{eq:ujboundcase2} we obtain 
\begin{equation}\label{eq:temp5}
	\int_{\left( B(0,R) \setminus B(X_0,\rho) \right) \setminus \Theta_j } |\nabla \uj|^2 dZ 
	\leq 
	\sum_i\int_{B_i} |\nabla \uj|^2 dZ 
	\lesssim 
	\sum_i\frac{1}{r_{B_i}^2} \int_{2B_i} | \uj(Z)|^2 dZ 
	\leq C(R,\rho).
\end{equation}
Combining \eqref{eq:temp4} and \eqref{eq:temp5} we obtain the desired estimate and hence proof of the claim \eqref{eq:L2gradcase2} is complete.

Next, we combine \eqref{eq:L2gradcase2} with the fact that $\sup_j\|\uj\|_{L^\infty(\R^n \setminus B(X_0,\rho))}\le C(\rho)$ to obtain 	that $\sup_j \|\uj\|_{W^{1,2}(B(0,R)\setminus B(X_0,\rho))} \leq C(R,\rho) <\infty$. Thus, there exists a subsequence (which we relabel) which converges weakly in $W^{1,2}_{\rm loc}(\RR^n\setminus B(X_0,\rho))$. Since we already know that $\uj \to u_\infty$ uniformly on compact sets of $\RR^n\setminus B(X_0,\rho)$, we can easily see that $u_\infty\in W^{1,2}_{\rm loc}(\RR^n\setminus B(X_0,\rho))$,  and $\nabla \uj \rightharpoonup \nabla u_{\infty} $ in $L^2_{\rm loc}(\RR^n\setminus B(X_0,\rho))$. This completes the proof of \ref{2-thm:pseudo-blow-geo} in Theorem \ref{thm:pseudo-blow-geo}.	
\end{proof}

\medskip

\begin{remark}\label{rem:deawffr}

In the \ref{CaseII} scenario the following remarks will become useful later. In what follows we assume that  
$0<\rho\le c_1R_0/2$ and $j$ is large enough.
\begin{enumerate}[label=\textup{(\roman*)}, itemsep=0.2cm]  
	
	\item\label{1-rem:deawffr} Let us pick $Y\in \partial B(\wXj,3\delta_j(X_j)/4)$ and note that \eqref{eq:wXjcase2} gives $Y, \wAj(0,c_1 R_0/2) \in \Oj \setminus \overline{B(\wXj,\delta_j(X_j)/2)}$, $|Y-\wAj(0,c_1 R_0/2)|<(c_1+2c_2)R_0$, and $\delta_j(Y)\ge c_1 R_0/4$. Recalling that $\Omega_j$ satisfies the interior corkscrew condition with constant $M$, it follows by definition that $\delta_j(\wAj(0,c_1 R_0/2))\ge c_1 R_0/(2M)$.  All these allow us to invoke Lemma \ref{lemm:NC-avoid} to then use \eqref{eqn:glb} and \eqref{eq:wXjcase2} and eventually show 
		\begin{equation}\label{eq:ujfixedvalue}
			\uj \left( \wAj\left(0,\frac{c_1 R_0}{2} \right) \right)
			\sim \uj(Y)
			\gtrsim \left| Y - \wXj \right|^{2-n} \sim \delta_j(X_j)^{2-n}\sim R_0^{2-n},
		\end{equation}
		where the implicit constants are independent of $j$.
		
		\item\label{2-rem:deawffr} The set $\partial B(X_0,\rho)$ is compact and away from $X_0$, so $\uj \to u_\infty$ uniformly in $\partial B(X_0,\rho)$. Since $\wXj \to X_0$, for any $Z\in \partial B(X_0,\rho)$ we have
			$\rho/2< |Z-\wXj| < 2\rho$ for $j$ sufficiently large. In particular by choosing $\rho<R_0/(16M)$, we have for $j$ large enough
			\begin{equation}
				|Z-\wXj| < 2\rho < \frac{R_0}{8M} \leq \frac{\diam(\Oj)}{4M} \leq \frac{\wdj(\wXj)}{2}, 
			\end{equation}
			where the last estimate uses that $X_j\in \Omega_j$ is a corkscrew point relative to the surface ball  $B(0,\diam(\Oj)/2)\cap\partial\Omega_j$ with constant $M$. Thus by \eqref{eqn:glb} if $j$ is large enough
\[ 
		\uj(Z) \gtrsim |Z-\wXj|^{2-n} \gtrsim \rho^{2-n}, \qquad  \forall\,Z\in \partial B(X_0,\rho) 
\]
			with implicit constants which are independent of $j$. Therefore,
			\begin{equation}\label{eq:uinftynearpole}
				u_\infty(Z) = \lim_{j\to\infty} \uj(Z) \gtrsim \rho^{2-n},
								\qquad  \forall\,Z\in \partial B(X_0,\rho) 
			\end{equation}
			For this reason it is natural to extend the definition of $u_\infty$ to all of $\R^n$ by simply letting $u_\infty(X_0) =+ \infty$.
			
			\item\label{3-rem:deawffr} Since $\uj$ is the Green function in $\Oj$ for $L_j$, an 
			elliptic operator with uniform ellipticity constants $\lambda =1$ and $\Lambda$, 
			by \eqref{eqn:glq} we know for any $1<r<\frac{n}{n-1}$,
				\begin{equation}\label{eq:upperr}
					\|\nabla \uj\|_{L^r(\Oj)} \lesssim |\Oj|^{\frac1r-\frac{n-1}n}\lesssim 
					R_0^{\frac{n}r-n+1}<\infty,
				\end{equation}
				provided $j$ is large enough and where the implicit constants depend on 
				dimension, $r$, and $\Lambda$, but are independent of $j$. 
				Note that $\nabla \uj \equiv 0$ outside of $\Oj$ by construction. Thus, one can easily show that passing to a subsequence (and relabeling) $\nabla \uj \rightharpoonup \nabla u_{\infty}$ in $L^r_{\rm loc}(\RR^n)$ for $1<r<n/(n-1)$.
			
	\end{enumerate}
\end{remark}

\medskip

\begin{proof}[Proof of \ref{3-thm:pseudo-blow-geo} in Theorem \ref{thm:pseudo-blow-geo}: \ref{CaseI}]

	It is clear that $\Oinf$ is an open set in \ref{CaseI} since $u\in C^\infty(\R^n)$.
	On the other hand, 	since $0\in\pOj$ for all $j$, by Lemma \ref{lm:cptHd} and modulo passing to a subsequence (which we relabel) we have that there exist non-empty closed sets $\Gamma_{\infty}, \Lambda_{\infty}$ such that
			$\overline{\Oj}\to \Gamma_\infty$ and $\pOj\to \Lambda_\infty$ as $j\to\infty$, where the convergence is in the sense of Definition \ref{def:cvsets}.
			
We are left with obtaining 
\begin{equation}\label{eq:claim-boundary--}
\Lambda_\infty = \pOinf
\qquad\mbox{and}\qquad\Gamma_\infty=\overline\Omega_\infty.
\end{equation}		
We first show that $\Lambda_\infty\subset \partial\Omega_\infty$. To that end we take $p\in\Lambda_\infty$, and there is a sequence $p_j\in\pOj$ such that $\lim_{j\to\infty} p_j = p$. Note that
			$ u_\infty(p) =\lim_{j\to \infty}\ \uj(p)$. On the other hand since the $\uj$'s are uniformly H\"older continuous on compact sets (see the Proof of  \ref{1-thm:pseudo-blow-geo} in Theorem \ref{thm:pseudo-blow-geo}) and $\uj(p_j)=0$ as $p_j\in\partial\Oj$ we have
			$$
			0\le u_\infty(p)
			\le
			|u_\infty(p)-u_j(p)|+
			|u_j(p)-u_j(p_j)|
			\lesssim
			|u_\infty(p)-u_j(p)|+|p-p_j|^\alpha\to 0,
			$$
as $j\to\infty$. Thus $u_{\infty}(p) = 0$, that is, $p\in \R^n\setminus \Omega_\infty$. 

Our goal is to show that $p\in\partial\Omega_\infty$. Suppose that $p\notin\partial\Omega_\infty$, then $p\in\R^n\setminus\overline{\Omega_\infty}$ and there exists $\epsilon\in(0,1)$ such that $B(p,\epsilon) \subset \R^n\setminus\overline{\Omega_\infty}$, that is, $u_\infty\equiv 0$ on $\overline{B(p,\epsilon)}$. In $\Oj$ we have
			$$
				\left|A_j\left(p_j,\frac{\epsilon}{2}\right) - A_j(0,1)\right|  \le \frac{\epsilon}{2} + | p_j | + 1 
				 \le 2\left(|p| + 1\right)
			$$
			and
			$$
			 \delta_j\left( A_j\left(p_j,\frac{\epsilon}{2}\right) \right) \geq \frac{1}{M} \frac{\epsilon }{2}, 
			\qquad \delta_j\left( A_j(0,1)\right) \geq \frac{1}{M}. 
			 $$
			Note also that
			$$
			\frac{\delta_j\left( A_j\left( p_j,\frac{\epsilon }{2} \right)\right) }{\delta_j(X_j)}
			+
			\frac{\delta_j\left( A_j(0,1)\right) }{\delta_j(X_j)}
			\sim
			\frac{1}{\diam(\Omega_j)}\to 0,
			\qquad\text{as }j \to\infty,
			$$
			hence for $j$ large enough, $A_j(0,1), A_j\left(p_j,\frac{\epsilon }{2} \right) \notin \overline{B(X_j,\delta_j(X_j)/2})$. 
						
						We can then apply Lemma \ref{lemm:NC-avoid} and Harnack's inequality along the constructed chain in $\Omega_j$ to obtain 
			$$
			G_j\left( X_j,A_j\left( p_j,\frac{\epsilon }{2} \right) \right) \sim G_j(X_j,A_j(0,1)),
			$$
			where the implicit constants depend on the allowable parameters, $\epsilon$ and $|p|$, but are uniform on $j$. Hence by \eqref{eq:pjzj},
\begin{equation}\label{eq:tmplb}
				\uj\left(\wAj\left(p_j,\frac{\epsilon}{2}\right)\right)  = \dfrac{ G_j \left( X_j,A_j\left( p_j,\frac{\epsilon }{2} \right)\right)}{\omega_j^{X_j}(B(0,1))} 
				\gtrsim C \dfrac{ G_j\left( X_j,A_j(0,1) \right)}{\omega_j^{X_j}(B(0,1))}  
				 = \uj(\wAj(0,1))
\ge C_0,
				\end{equation}
where $C_0$ is independent of $j$.  

Note that since $\uj\to u_\infty$ on compact sets it follows from our assumption that for $j$ large enough  depending on $C_0$ 
\begin{equation}\label{eq:contrad}
u_j(Z)=u_j(Z)-u_\infty(Z)<\frac{C_0}2,
\qquad
\forall\,Z\in \overline{B(p,\epsilon)}.
\end{equation}
However, for $j$ large enough $\wAj (p_j,\epsilon/2) \in B(p_j,\epsilon/2)  \subset B(p,\epsilon)$  and then \eqref{eq:contrad} contradicts \eqref{eq:tmplb}. Thus, we have shown that necessarily $p\in\partial\Omega_\infty$ and consequently $\Lambda_\infty\subset \partial\Omega_\infty$.

Let us next show that $\pOinf \subset \Lambda_{\infty}$. Assume that $p \notin \Lambda_{\infty}$. Since $\Lambda_{\infty}$ is a closed set, there exists $\epsilon>0$ such that $B(p,2\epsilon)\cap\Lambda_{\infty} = \emptyset$. Since $\Lambda_{\infty}$ is the limit of $\pOj$, by Definition \ref{def:cvsets} we have that for $j$ large enough $B(p,\epsilon) \cap \pOj=\emptyset$. Hence, by passing to a subsequence (and relabeling) either $B(p,\epsilon) \subset \Oj$ for all $j$ large enough or $B(p,\epsilon) \subset \R^n\setminus\overline{\Oj}$ for all $j$ large enough. 

We first consider the case $B(p,\epsilon) \subset \Oj$. Hence,  $\delta_j(p)  \geq \epsilon $ and $|A_j(0,1) - p| \leq 1+|p|$. Thus there exists a Harnack chain joining $A_j(0,1)$ and $p$ whose length is independent of $j$ and depends on $\epsilon$ and $|p|$. 
We next observe that for $j$ large enough $|p-\wXj|>\wdj(\wXj)/2$. Indeed, if we take $j$ large enough, using that $0\in\partial\Oj$ and \eqref{eq:temp1} we clearly have
$$
1\le \frac{|\wXj|}{\wdj(\wXj)}
\le
\frac{|\wXj-p|}{\wdj(\wXj)}
+
\frac{|p|}{\wdj(\wXj)}
<
\frac{|\wXj-p|}{\wdj(\wXj)}+\frac12,
$$
and we just need to hide to obtain the desired estimate. 
Once we know that $|p-\wXj|>\wdj(\wXj)/2$, we also note that $|\delta_j(A_j(0,1))|\le 1\ll \diam(\Omega_j)\sim\delta_j(X_j)$
and hence $A_j(0,1)\notin\overline{B(X_j,\delta(X_j)/2})$ for $j$ large enough. 

We can now invoke Lemma \ref{lemm:NC-avoid} and Harnack's inequality along the constructed chain in $\Omega_j$ to obtain that $G_j(X_j,p) \sim G_j(X_j,A_j(0,1))$, which combined with \eqref{def:ujcasei} and \eqref{eq:pjzj}, yields
\begin{equation}\label{eqn:3.5}
\uj(p) \sim \uj(\wAj(0,1)) \sim 1,
\end{equation}
where the implicit constants depend on the allowable parameters, $p$ and $\epsilon$, but are uniform on $j$. Letting $j\to\infty$ we obtain that $u_\infty(p)\sim 1$ which implies that $p\in\Omega_\infty$, and since we have already shown that  $\Omega_\infty$ is open, it follows that $p\notin\partial \Omega_\infty$.
			
We next consider now the case 	$B(p,\epsilon) \subset \R^n\setminus\overline{\Oj}$ for all $j$ large enough which implies that by construction $\uj(X) = 0$ for all $X\in B(p,\epsilon)$. By uniform convergence of $\uj$ in compact sets we have that $u_{\infty}(X) = 0$ for $X\in B(p,\epsilon/2)$, which implies $B(p,\epsilon/2) \subset \{u_{\infty}=0\}$ and therefore $p\notin \partial\Oinf$.

In both cases we have shown that if $p \notin \Lambda_{\infty}$ then  $p\notin \partial\Oinf$, or, equivalently, $\partial\Oinf\subset \Lambda_{\infty}$.  This together with the converse inclusion completes the proof of $\Lambda_{\infty} = \partial\Oinf$.

Our next goal is to show that $\Gamma_\infty=\overline\Omega_\infty$. Note that if $Z\in\Omega_\infty$, then $u_\infty(Z)>0$ and this implies that $\uj(Z)>0$ for $j$ large enough. The latter forces $Z\in\Omega_j$ 	for all $j$ large enough. This implies that $Z\in \Gamma_\infty$, and we have shown that $\Omega_\infty\subset \Gamma_\infty$. Moreover since $\Gamma_{\infty}$ is closed, we conclude that $\overline\Omega_\infty\subset \Gamma_\infty$. 

To obtain the converse inclusion we take $X\in \Gamma_\infty$. Assume that there is $\epsilon>0$ such that $\overline{B(X,2\epsilon)}\subset \R^n\setminus\overline{\Omega_\infty}$, in particular $B(X,2\epsilon) \cap \partial\Oinf = \emptyset$. Since we have already shown that $\partial\Oinf$ is the limit of $\pOj$'s, for $j$ large enough $B(X,\epsilon) \cap \pOj = \emptyset$. By the definition of $\Gamma_{\infty}$, there is a sequence $\{Y_j\}\subset \overline\Oj $ with $Y_j\to X$ as $j\to\infty$. Thus, for all $j$ large enough $B(X,\epsilon)$ is a neighborhood of $Y_j$; and in particular $\Oj\cap B(X,\epsilon)\not =\emptyset$ since $Y_j \in \overline{\Oj}$. On the other hand, since $B(X,\epsilon) \cap \pOj = \emptyset$
			we conclude that $B(X,\epsilon)\subset \Oj$. At this point we follow a similar argument to the one used to obtain \eqref{eqn:3.5} replacing $p$ by $X$ and obtain for all $j$ large enough
			$$
				\uj(X) \sim \uj(\wAj(0,1)) \sim 1,
			$$
			where the implicit constants depend on the allowable parameters, $|X|$ and $\epsilon$, but are uniform on $j$. Letting  $j\to\infty$ it follows that $u_{\infty}(X)>0$ and hence $X\in\Oinf$, contradicting the assumption that there is $\epsilon>0$ such that $\overline{B(X,2\epsilon)}\subset \R^n\setminus\overline{\Omega_\infty}$. In sort, we have shown that $\overline{B(X,2\epsilon)}\cap  \overline{\Omega_\infty}\neq\emptyset$ for every $\epsilon>0$, that is, $X\in  \overline{\Omega_\infty}$. We have eventually proved that  $\Gamma_\infty\subset \overline\Omega_\infty$ this completes the proof of  \eqref{eq:claim-boundary--} in the \ref{CaseI} scenario.

			Since $\diam(\Omega_j) \to \infty$ and $0\in \overline{\Oj} \to \overline{\Oinf}$ uniformly on compact set, $\Oinf$ is unbounded. Otherwise we would have $\overline{\Oinf} \subset B(0,R)$, and for sufficiently large $j$ one would see that $\overline{\Oj} \subset B(0,2R)$, which is a contradiction.

On the other hand, it is possible that $\diam(\pOj) \not\to \diam(\pOinf)$, hence we do not know whether $\diam(\pOinf)=\infty$. However, under the assumption that the $\pOj$'s are Ahlfors regular with uniform constant, we claim that $\pOinf$ is also unbounded. Assume not, then there is $R>0$ such that $\pOinf \subset B(0,R)$. Let $k$ be a large integer, and notice that $\pOj \to \pOinf$ uniformly on the compact set $\overline{B(0,kR)}$. Thus for $j$ sufficiently large (depending on $k$) 
			\begin{equation}\label{eq:nobdbd}
				\pOj \cap \overline{B(0,kR)} \subset B(0,2R).
			\end{equation}  
			Since $\diam(\pOj)\to \infty$ we can also guarantee that $\diam(\pOj) > kR $ for $j$ sufficiently large. Recalling that $0\in \pOj$, we can then consider the surface ball $\Delta_j(0,kR) = B(0,kR)\cap \pOj$. By \eqref{eq:nobdbd} and the Ahlfors regularity of $\pOj$,
			\begin{equation}
				C_{AR}^{-1}(kR)^{n-1}\leq \sj(\Delta_j(0,kR)) \le  \sj(B(0,2R)\cap\partial\Omega_j) \leq C_{AR} (2R)^{n-1}.
			\end{equation}
			Letting $k$ large readily leads to a contradiction. 
			\end{proof}
			
\begin{proof}[Proof of \ref{3-thm:pseudo-blow-geo} in Theorem \ref{thm:pseudo-blow-geo}: \ref{CaseII}]
Take $X\in \Omega_\infty$, that is, $u_\infty(X)>0$. If $X\neq X_0$ then $u_\infty$ is continuous at $X$ and hence $u_\infty(Z)>0$ for every $Z\in B(X,r_x)$ for some $r_x$ small enough. On the other hand, if $X=X_0$, by Remark \ref{rem:deawffr} part \ref{2-rem:deawffr} we have that $u_\infty(Z)>0$ for all $Z\in B(X_0, \rho)$ with $\rho$ sufficiently small (here we use the convention that $+\infty>0$). Note that this argument show in particular that $B(X_0,\rho)\subset \Omega_\infty$.

On the other hand, 	since $0\in\pOj$ for all $j$, by Lemma \ref{lm:cptHd} and modulo passing to a subsequence (which we relabel), there exist closed sets $\Gamma_{\infty}, \Lambda_{\infty}$ such that
			$\overline\Oj\to \Gamma_\infty$ and $\pOj\to \Lambda_\infty$ as $j\to\infty$ in the sense of Definition \ref{def:cvsets}. We are going to obtain that 
\begin{equation}\label{eq:claim-boundary--CaseII}
\Lambda_\infty = \pOinf
\qquad\mbox{and}\qquad\Gamma_\infty=\overline\Omega_\infty.
\end{equation}

Let $p\in\Lambda_\infty$, there is a sequence $\{p_j\}\subset \pOj$ such that $p_j\to p$ as $j\to\infty$. Note that by \eqref{eq:wXjcase2}
$$
c_1 R_0
\le 
\wdj(\wXj) 
\le 
|\wXj-p_j|
\le
|\wXj-p|+|p-p_j|.
$$
Thus, for $j$ large enough, $|\wXj-p|> \wdj(\wXj) /2>c_1R_0/2$. In particular, $X_0\neq p$
and $ \uj(p)\to u_\infty(p)$ as $j\to\infty$. 
On the other hand since the $\uj$'s are uniformly H\"older continuous on compact sets as observed above, $|\uj(p)|=|\uj(p)-\uj(p_j)| \leq C|p-p_j|^\alpha$, thus $\uj(p)\to 0$ as $j\to\infty$. Therefore $u_{\infty}(p) = 0$, that is, $p\in \R^n\setminus\Omega_\infty$. 

Suppose now that $p\notin\partial\Omega_\infty$. Then, there exists $0<\epsilon<\wdj(\wXj) /4$ such that $\overline{B(p,\epsilon)} \subset \R^n\setminus\Oinf$, or, equivalently, $u_\infty\equiv 0$ on $B(p,\epsilon)$. Note that 
			$$
				\left|\wAj\left( p_j,\frac{\epsilon}{2}\right) - \wAj\left(0,\frac{c_1 R_0}{2} \right)\right|  \leq \frac{\epsilon}{2} + |p_j | + \frac{c_1 R_0}{2} \leq C(\epsilon, |p|, R_0).
			$$
			Also, 
			$$
			\frac{\epsilon }{2M} 
			\le
			\wdj\left( \wAj\left( p_j,\frac{\epsilon}{2}\right) \right) 
			<
			\frac{\epsilon}{2}
			<
			\frac{\wdj(\wXj)}{2}
			 $$
and, by  \eqref{eq:wXjcase2},
\begin{equation}\label{eq:fwefgaserg}
\frac{c_1 R_0}{2M}\le \wdj\left( \wAj\left(0,\frac{c_1 R_0}{2} \right)\right)<\frac{c_1 R_0}{2}
\le
\frac{\wdj(\wXj)}{2}.
\end{equation}

Notice that in particular $\wAj\left( p_j,\frac{\epsilon}{2}\right)$, $\wAj\left(0,\frac{c_1 R_0}{2} \right) \notin\overline{B(\wXj,\wdj(\wXj)/2)}$. We can now invoke Lemma \ref{lemm:NC-avoid},  Harnack's inequality along the constructed chain in $\Oj$, and \eqref{eq:ujfixedvalue} to see that
			\begin{equation} \label{eqn103case2}
			\uj \left( \wAj\left( p_j,\frac{\epsilon }{2} \right) \right) 
			\sim  
			\uj\left(\wAj\left(0,\frac{c_1 R_0}{2}\right) \right) 
			\gtrsim 
			1, 
			\end{equation}
			with implicit constant depending on the allowable parameters, $\epsilon, |p|, R_0$ but independent of $j$. On the other hand, for all $j$ large enough
			\begin{equation} \label{eqn104case2}
			\wAj \left(p_j,\frac{\epsilon}{2}\right) 
			\in B\left(p_j,\frac{\epsilon}{2}\right) 
			\subset 
			\overline{B(p,\epsilon)}
			\subset
			\R^n\setminus B(\wXj,\wdj(\wXj)/4),
						 \end{equation}
hence $u_j\to u_\infty$ uniformly on $\overline{B(p,\epsilon)}$ with $u_\infty\equiv 0$ on $\overline{B(p,\epsilon)}$. This and \eqref{eqn104case2} contradict \eqref{eqn103case2} and therefore we conclude that $p\in\pOinf$, and we have eventually obtained that $\Lambda_\infty \subset \pOinf$.
			
			To show that $\pOinf \subset \Lambda_{\infty}$, we assume that $p \notin \Lambda_{\infty}$. If $p=X_0$, then since we observed above that $B(X_0,\rho) \subset \Oinf$ (see \eqref{eq:uinftynearpole}) then $X_0 \notin \pOinf$. 
			
			Assume next that $p\neq X_0$. Since $\Lambda_{\infty}$ is a closed set and since $\wXj\to X_0$ as $j\to\infty$, there exists $\epsilon>0$ such that $B(p,2\epsilon)\cap\Lambda_{\infty} = \emptyset$ and $X_0, \wXj \notin B(p, 2 \epsilon)$ for all $j$ large enough. Moreover, since $\Lambda_{\infty}$ is the limit of $\pOj$, by Definition \ref{def:cvsets} we have that for all $j$ large enough $B(p,\epsilon) \cap \pOj=\emptyset$.  Hence, passing to a subsequence (and relabeling) either $B(p,\epsilon) \subset \Oj$ for $j$ large enough or $B(p,\epsilon) \subset \R^n\setminus\overline{\Oj}$ for $j$ large enough. 
			
			Assume first that $B(p,\epsilon) \subset \Oj$ for all $j$ large enough. We consider two subcases. Assume first that $p\notin \overline{B(\wXj,\wdj(\wXj)/2}$. Then, proceeding as before, by \eqref{eq:fwefgaserg} we can apply Lemma \ref{lemm:NC-avoid} and Harnack's inequality along the constructed chain in $\Oj$ to get 
			\begin{equation}\label{eqn:3.5case2}
				\uj(p) \sim \uj\left(\wAj\left(0,\frac{c_1 R_0}{2} \right) \right) \gtrsim 1,
				\end{equation} 
				with implicit constant depending on the allowable parameters, $\epsilon, |p|, R_0$ but independent of $j$. Suppose next that $p\in \overline{B(\wXj,\wdj(\wXj)/2)}$. In that case we can use \eqref{def:ujcaseii}, \eqref{eqn:glb}, and   \eqref{eq:wXjcase2}  to see that for all $j$  large enough
			\begin{equation}\label{eqn:3.5case2-a}
				\uj(p) \gtrsim |p-\wXj|^{2-n} \gtrsim \wdj(\wXj)^{2-n}\gtrsim (c_2 R_0)^{2-n},			
												\end{equation} 
				with implicit constants which are uniform on $j$. Combining the two cases together we have shown that $\uj(p) \gtrsim 1$ uniformly on $j$. Letting $j\to\infty$ we conclude that $
			u_\infty(p)\gtrsim 1$ and hence $p\in \Omega_\infty$, and since we have already shown that $\Omega_\infty$ is an open set we conclude that $p\notin \partial\Omega_\infty$

We now tackle the second case on which $B(p,\epsilon) \subset \R^n\setminus\overline{\Oj}$ for all $j$ large enough. In this scenario $\uj(X) = 0$ for all $X\in B(p,\epsilon)$. Since $X_0\notin B(p,2\epsilon)$, by uniform convergence of $\uj$ in $\overline{B(p,\epsilon/2)}$ we have that $u_{\infty}(X) = 0$ for $X\in B(p,\epsilon/2)$, which implies $B(p,\epsilon/2) \subset \R^n\setminus\Omega_\infty$ and eventually $p\notin \partial\Oinf$.

In both cases we have shown that if $p \notin \Lambda_{\infty}$ then  $p\notin \partial\Oinf$, or, equivalently, $\partial\Oinf\subset \Lambda_{\infty}$.  This together with the converse inclusion completes the proof of $\Lambda_{\infty} = \partial\Oinf$.

Our next task is to show that $\Gamma_\infty=\overline\Omega_\infty$. 
Let $Z\in\Oinf$ and assume first that $Z=X_0$. By \eqref{poleawaycase2} and since $\wXj\to X_0$ as $j\to\infty$ we have that $X_0 \in B(\wXj,2\rho) \subset \Oj$ for all $j$ large enough, thus $Z=X_0\in \Gamma_\infty$. On the other hand, if $Z\neq X_0$ since $u_\infty(Z)>0$ we have that $\uj(Z)>0$ for all $j$ large enough. This forces as well that $Z\in\Oj$ for $j$ all large enough and again $Z\in \Gamma_\infty$. With this  we have shown that $\Omega_\infty\subset \Gamma_\infty$. Moreover, since $\Gamma_{\infty}$ is closed we conclude as well that $\overline\Omega_\infty\subset \Gamma_\infty$. 

Next we look at the converse inclusion and take $X\in \Gamma_\infty$. Assume that $X\in \R^n\setminus\overline{\Omega_\infty}$. Thus, there is $\epsilon>0$ such that $\overline{B(X,2\epsilon)}\subset \R^n\setminus\overline{\Omega_\infty}$. In particular $B(X,2\epsilon) \cap \partial\Oinf = \emptyset$ and $B(X_0,\rho) \cap B(X,2\epsilon) = \emptyset$ (recall that we showed that $B(X_0,\rho)\subset \Oinf$).  Since we have already shown that $\partial\Oinf$ is the limit of $\pOj$'s, for $j$ large enough $B(X,\epsilon) \cap \pOj = \emptyset$. By the definition of $\Gamma_{\infty}$, there is a sequence $\{Y_j\}\subset \overline\Oj$ so that $Y_j\to X$ as $j\to\infty$. Thus, for all $j$ large enough $B(X,\epsilon)$ is a neighborhood of $Y_j$, and, in particular, $\Oj\cap B(X,\epsilon)\not =\emptyset$ since $Y_j \in \overline{\Oj}$. Besides, since $B(X,\epsilon) \cap \pOj = \emptyset$
			we conclude that $B(X,\epsilon)\subset \Oj$. Using a similar argument to the one used to obtain \eqref{eqn:3.5case2} and \eqref{eqn:3.5case2-a} we have (replacing $p$ by $X$) that
			$$
				\uj(X) \gtrsim 1
			$$
			independently of $j$ and with constants that depend on the allowable parameters, $\epsilon, |X|, R_0$. Since $u_j(X)\to u_\infty(X)$ we conclude that $u_{\infty}(X)>0$ and thus $X\in\Oinf$, contradicting the assumption that $X\in \R^n\setminus\overline{\Omega_\infty}$. Eventually, $X\in\overline\Omega_\infty$ and we have obtained that $\Gamma_\infty\subset \overline\Omega_\infty$.

			Since $\diam(\Oj) \to R_0$ is finite and $0\in \pOj$, we have $\Oj, \Oinf \subset \overline{B(0,2R_0)}$ for $j$ sufficiently large. Hence $\overline{\Oj} \to \overline{\Oinf}$ uniformly, and thus $\diam(\Oinf) = \lim\limits_{j\to\infty} \diam(\Oj) = R_0 \geq 1$. 
\end{proof}

For later use let us remark that in the \ref{CaseII} scenario the fact that 
			$\overline{\Oj}\to\overline{\Omega_\infty}$ and $\pOj \to \pOinf$ as $j\to\infty$ in the sense of Definition \ref{def:cvsets} yields
			\begin{equation}\label{diamOinf}
				\diam(\Oinf) = \diam (\overline{\Oinf}) = \lim_{j\to\infty} \diam(\overline{\Oj}) 
				= 
				\lim_{j\to\infty} \diam(\Oj)= 
				R_0.
			\end{equation}
			\begin{equation}\label{diampOinf}
				\diam(\pOinf) = \lim_{j\to\infty}\diam(\pOj) = R_0
			\end{equation}

\begin{proof}[Proof of \ref{4-thm:pseudo-blow-geo} in Theorem \ref{thm:pseudo-blow-geo}] 
Notice that $\Omega_\infty\not =\emptyset$ since $0\in\partial\Omega_\infty$. Next we show that $\Omega$ satisfies the interior corkscrew and the Harnack chain. Let us sketch the argument. For the interior corkscrew condition, fixed $p\in\pOinf$ and $0<r< \diam(\pOinf)$, we take a sequence $p_j\in \partial\Omega_j$ so that $p_j\to p$ and for each $j$ we let $A_j$ be an interior corkscrew relative to $B(p_j,r/2)\cap\partial\Omega_j$ in $\Omega_j$. All the $A_j$'s are contained in $B(p, 3r/4)$, hence, passing to a subsequence, they converge to some point $A$. 
Using that the interior corkscrew condition holds for all $\Omega_j$ with the same constant $M$, it follows that each $A_j$ is uniformly away from $\partial\Omega_j$ and so will be $A$ from $\partial \Omega_\infty$ since $\pOj \to \pOinf$. I turn, this means that $A$ is an interior corkscrew relative to $B(p,r)\cap\partial\Omega_\infty$ in $\Omega_\infty$. Regarding the Harnack chain condition we proceed in a similar fashion. Fixed $X, Y\in\Omega_\infty$ for some fixed $j$ large enough we will have that $X,Y\in\Omega_j$ with $\delta_j(X)\approx \delta_\infty(X)$ and $\delta_j(Y)\approx \delta_\infty(Y)$. We can then construct a Harnack chain to join $X$ and $Y$ within  $\Omega_j$ (whose implicit constants are independent of $j$). Again, since each ball in the constructed Harnack chain is uniformly away from $\partial \Omega_j$, it will also be uniformly away from $\partial\Omega_\infty$  allowing us to conclude that this chain of balls is indeed a Harnack chain within $\Omega_\infty$.

\noindent\textbf{Interior corkscrew condition.} Recall that each $\Omega_j$ is a uniform domain with constants $M, C_1>1$. Hence, for all $q\in \pOj$ and $r\in (0,\diam(\pOj))$ there is a point $A_j(q,r)\in \Omega_j$ such that
		\begin{equation}\label{eqn106}
B\left(A_j(q,r), \frac{r}{M} \right) \subset B(q,r)\cap\Omega_j.
	\end{equation} 
			 
Let $p\in\pOinf$ and $0<r< \diam(\pOinf)$. In \ref{CaseII}, by \eqref{diampOinf} we get that $r<\diam(\pOj)$ for all $j$ sufficiently large. In \ref{CaseI}, either $\diam(\pOinf) = \infty$ or $\diam(\pOinf) < \infty$, but we still have $r<\diam(\pOj)$ for all $j$ sufficiently large (note that in the latter case $\diam(\pOj) \not\to \diam(\pOinf)$).	
	Since $\pOj\to\pOinf$, we can find $p_j\in \pOj$ converging to $ p$. For each $j$ there exists $\wAj(p_j,r/2)$ such that 
	\begin{equation}\label{eqn107}
		B\left(\wAj\left(p_j, \frac{r}{2}\right), \frac{r}{2M}\right) \subset B\left(p_j, \frac{r}{2}\right) \cap \Oj.
	\end{equation}
	In particular we deduce that
	\begin{equation}\label{saegt}
		\overline{B\left(\wAj\left(p_j, \frac{r}{2}\right), \frac{r}{3M}\right)} \subset \Oj
		\qquad\mbox{and}\qquad 
		\dist\left( B\left(\wAj\left(p_j, \frac{r}{2}\right), \frac{r}{2M}\right), \pOj \right) \geq \frac{r}{6M}.
	\end{equation}
Note that for $j$ large enough 
\begin{equation} \label{eqn108}
\wAj\left(p_j, \frac{r}{2}\right) \in B\left( p_j, \frac{r}{2} \right) \subset \overline{B\left(p,\frac{3r}{4}\right)}. 
\end{equation}
Modulo passing to a subsequence (which we relabel) $\wAj\left(p_j, r/2\right)$ converges to some point, which we denote by $A(p,r)$, and  for all $j$ sufficiently large (depending on $r$)
\begin{equation} \label{eqn109}
B\left(A(p,r), \frac{r}{4M} \right) \subset B\left(\wAj\left(p_j, \frac{r}{2}\right), \frac{r}{3M}\right)  \subset B(p,r)\cap\Oj.
 \end{equation}
The fact that $\overline{\Oj} \to \overline{\Oinf}$, the first inclusion in \eqref{eqn109}, and \eqref{saegt} give for all $j$ large enough
 \begin{equation}\label{eq:temp107}
 	B\left(A(p,r),\frac{r}{4M}\right) \subset \overline{\Oinf}
 	\qquad\mbox{and}\qquad
 	\dist\left( B\left(A(p, r), \frac{r}{4M}\right), \pOj \right) \geq 
 	\frac{r}{6M}.
 \end{equation} 
This and the fact that $\pOj \to \pOinf$ yield that $\dist(B(A(p,r),r/4M), \pOinf)\ge r/6M$, hence $B(A(p,r),r/4M)$ misses $\pOinf$. Combining this with \eqref{eq:temp107} and the second inclusion in \eqref{eqn109}, we conclude that 
\begin{equation}\label{eq:NTOinf}
 			B\left(A(p,r),\frac{r}{4M} \right) \subset \Oinf \cap B(p,r).
 		\end{equation}
Hence, $\Omega_\infty$ satisfies the interior corkscrew condition with constant $4M$.
\medskip

\noindent \textbf{Harnack chain condition.} Fix $X,Y\in \Oinf$ and pick $q_X, q_Y\in\pOinf$ such that $|X-q_X| = \delta_{\infty}(X), |Y-q_Y|=\delta_\infty(Y)$. 	Without loss of generality we may assume that $\delta(X)\ge \delta(Y)$ (otherwise we switch the roles of $X$ and $Y$). Let us recall that every $\Omega_j$ satisfies the Harnack chain condition with constants $M, C_1>1$.
Set 
\begin{equation}\label{Def:Theta}
\Theta:=M\left(2+\log_2^+\left( \frac{|X-Y|}{\min\{\delta_{\infty}(X),\delta_{\infty}(Y)\}} \right)\right)
=
M\left(2+\log_2^+\left( \frac{|X-Y|}{\delta_{\infty}(Y)} \right)\right).
\end{equation}
  Choose $R\ge $ large enough (depending on $X,Y$) so that
\begin{equation}\label{BqX}
B(q_X,\delta_\infty(X)/2), B\big(X, (2C_1^2)^{4\Theta}\delta_\infty(X)\big)
\subset B(0,R)
\end{equation}
and
\begin{equation}\label{BqY}
B(q_Y,\delta_\infty(Y)/2),  B\big(Y, (2C_1^2)^{4\Theta}\delta_\infty(Y)\big)\subset B(0,R)
\end{equation}
Take also $d=2^{-1} C_1^{-2\Theta}\le 1$ which also depends on $X,Y$. Then, by \ref{3-thm:pseudo-blow-geo} in Theorem \ref{thm:pseudo-blow-geo} we can take $j$ large enough (depending on $R$ and $d$) so that
	\begin{equation}\label{eq:temp109}
	    D\big[\pOj \cap \overline{B(0,R)}, \pOinf\cap \overline{B(0,R)}\big], D\big[\overline\Oj \cap \overline{B(0,R)}, \overline\Oinf\cap \overline{B(0,R)}\big] \leq  \frac{d}{2} \delta_{\infty}(Y)\le \frac{d}{2}  \delta_{\infty}(X),
	\end{equation}
By \eqref{eq:temp109}, \eqref{BqX}, and \eqref{BqY} we have that $X,Y\in \Oj$, and 
	\begin{equation}\label{eqn110}
	 \frac{\delta_{\infty}(X)}{2} \leq \wdj(X) \leq \frac{3\delta_{\infty}(X)}{2}\qquad\mbox{and}\qquad\frac{\delta_{\infty}(Y)}{2} \leq \wdj(Y) \leq \frac{3\delta_{\infty}(Y)}{2}. 
	 \end{equation}
	Since $\Oj$ satisfies the Harnack chain condition with constants $M, C_1>1$, there exists a collection  of balls $B_1,\dots, B_K$ (the choice of balls  depend on the fixed $j$) connecting $X$ to $Y$ in $\Oj$ and such that
	\begin{equation} \label{eq:HBj}
		C_1^{-1} \dist(B_k,\partial\Omega_j) \leq \diam(B_k) \leq C_1 \dist(B_k,\partial\Omega_j),  
	\end{equation} 
	for $k=1,2,\dots,K$ where
	\begin{equation}\label{eqn111}
	 K 
	 \leq 
	 M\left(2+\log_2^+\left( \frac{|X-Y|}{\min \{ \wdj(X), \wdj(Y) \}} \right)\right)
	 \leq 
	 2\Theta.
	  \end{equation}
	  Combining \eqref{eq:HBj} and \eqref{eqn111}, one can see that for every $k = 1,2,\dots,K$
	  \begin{equation}\label{aaa1}
	  \dist(B_k,\partial\Omega_j)  \geq d \delta_{\infty}(X), \qquad \diam(B_k)\le (2C_1^2)^{2\Theta}\delta_\infty(Y)
	  \end{equation}
	  and
	  \begin{equation}\label{aaa2}
\dist(X,B_k)\le 2(2C_1^2)^{2\Theta}\delta_{\infty}(X),\qquad \dist(Y,B_k) \le 2(2C_1^2)^{2\Theta} \delta_{\infty}(Y).
	  \end{equation}

Given an arbitrary $q_j\in\partial\Omega_j\setminus \overline{B(0,R)}$, by \eqref{BqX}, \eqref{aaa1},	and \eqref{aaa2}	 it follows that 
\begin{multline}\label{rfarefer}
(2C_1^2)^{4\Theta}\delta_\infty(X)\le |q_j-X|\le \dist(q_j,B_k)+\diam(B_k)+\dist(X,B_k)
\\
\le
\dist(q_j,B_k)+
3(2C_1^2)^{2\Theta}\delta_{\infty}(X).
\end{multline}
Hiding the last term, using that $\Theta>2$ and taking the infimum over the $q_j$ as above we conclude that
\begin{equation}\label{gvasf}
4C_1 (2C_1^2)^{2\Theta}\delta_\infty(X)< \dist(B_k,\partial\Omega_j\setminus \overline{B(0,R)}).
\end{equation}
On the other hand, by \eqref{eq:HBj} and \eqref{aaa1}
$$
\dist(B_k,\partial\Omega_j)\le C_1\diam(B_k)
\le C_1(2C_1^2)^{2\Theta}\delta_\infty(Y)
\le
C_1(2C_1^2)^{2\Theta}\delta_\infty(X),
$$
which eventually leads to $\dist(B_k,\partial\Omega_j)=\dist(B_k,\partial\Omega_j\cap \overline{B(0,R)})$.
Analogously, replacing $q_j$ by $q\in\partial\Omega_\infty\setminus \overline{B(0,R)}$ in \eqref{rfarefer} we can easily obtain that \eqref{gvasf} also holds for $\Omega_\infty$: 
\begin{equation}\label{gvasserf}
4C_1 (2C_1^2)^{2\Theta}\delta_\infty(X)< \dist(B_k,\partial\Omega_\infty\setminus \overline{B(0,R)}).
\end{equation}
But, \eqref{aaa2} yields
$$
\dist(B_k,\partial\Omega_\infty)
\le
\delta_\infty(X)+\dist(X,B_k)
\le
\delta_\infty(X)+
 2(2C_1^2)^{2\Theta}\delta_\infty(Y)
 \le 3(2C_1^2)^{2\Theta}\delta_\infty(Y)
 ,
$$
which eventually leads to $\dist(B_k,\partial\Omega_\infty)=\dist(B_k,\partial\Omega_\infty\cap \overline{B(0,R)})$.
Using all these, \eqref{eq:temp109}, the triangular inequality and \eqref{eq:temp109} we can obtain
\begin{multline*}
\big|\dist(B_k,\partial\Omega_j) -\dist(B_k,\partial\Omega_\infty)\big| 
=
\big|\dist(B_k,\partial\Omega_j\cap \overline{B(0,R)})-\dist(B_k,\partial\Omega_\infty\cap \overline{B(0,R)})\big|
\\
\leq D\big[\pOj \cap \overline{B(0,R)},\pOinf\cap \overline{B(0,R)}\big] \leq \frac{d}{2} \delta_{\infty}(X) 
\leq \frac12 \dist(B_k,\Omega_j).
\end{multline*}
Thus,
\begin{equation}\label{y6e5}
\frac23 \dist(B_k,\partial\Omega_\infty) \leq \dist(B_k,\partial\Omega_j) \leq 2 \dist(B_k,\partial\Omega_\infty).
\end{equation}
and moreover $B_k\cap\partial\Omega_\infty=\emptyset$. Note that the latter happens for all $k=1,\dots, K$. Recall also that $X\in B_1\cap\Omega_\infty$ and that $B_k\cap B_{k+1}\neq\emptyset$. Consequently, we necessarily have that $B_k \subset \Oinf$ for all $k=1,\dots, K$. Furthermore, \eqref{y6e5} and \eqref{eq:HBj} give
	  \begin{equation} \label{eq:HBj-infty}
		    \frac{2}{3} C_1^{-1} \dist(B_k,\partial\Omega_\infty)\leq \diam(B_k) \leq 2C_1 \dist(B_k,\partial\Omega_\infty). 
	\end{equation}
To summarize, we have found a chain of balls $B_1, \dots, B_K$, all contained in $\Oinf$, which  verify \eqref{eq:HBj-infty}, and connect $X$ to $Y$. Also,
$K$ satisfies \eqref{eqn111} with $\Theta$ given in \eqref{Def:Theta}. Therefore $\Omega_\infty$ satisfies the Harnack chain condition with constants $2M$ and $2C_1$. This completes the proof of  \ref{4-thm:pseudo-blow-geo} in Theorem \ref{thm:pseudo-blow-geo}.
\end{proof}

\begin{proof}[Proof of \ref{5-thm:pseudo-blow-geo} in Theorem \ref{thm:pseudo-blow-geo}]
We first recall that for every $j$, $\sj=\HH^{n-1}|_{\partial\Omega_j}$ is an Ahlfors regular measure with constant $C_{AR}$ and hence $\spt\sj=\partial\Omega_j$. 
In particular the sequence $\{\sj\}$ satisfies conditions \ref{1-lm:sptcv} and \ref{2-lm:sptcv} of Lemma \ref{lm:sptcv}. 

 On the other hand, the fact that $\partial\Omega_j$ is Ahlfors regular easily yields, via a standard covering argument, that $\HH^{n-1}(\partial\Omega_j) \le 2^{n-1}C_{AR} \diam(\Omega_j)^n$. Hence, using again that $\partial\Omega_j$ is Ahlfors regular we conclude that for every $R>0$
\[ 
\sup_j \sj(B(0,R)) = \sup_j \HH^{n-1}(\partial\Omega_j \cap B(0,R)) \leq 2^{n-1}C_{AR}  R^{n-1}.
\]
Therefore modulo passing to a subsequence (which we relabel), there exists a Radon measure $\sinf$ such that $\sj \rightharpoonup \sinf$ as $j\to\infty$.
Using Lemma \ref{lm:sptcv}, $\partial\Omega_j=\spt\sj \to \spt\sinf$  as $j\to\infty$ in the sense of Definition \ref{def:cvsets}. This and  \ref{3-thm:pseudo-blow-geo} in Theorem \ref{thm:pseudo-blow-geo}  lead to $\spt\sinf = \pOinf$. 

 To show that $\mu_\infty$ is Ahlfors regular take $q\in\partial\Omega_\infty$. Let $q_j\in\pOj$ be such that $q_j\to q$ as $j\to\infty$. For  any $r>0$, using
 \cite[Theorem 1.24]{Ma}  and that $\sigma_j$ is Ahlfors regular with constant $C_{AR}$ we conclude that
	\begin{equation}\label{eq:upperAR}
\mu_\infty(B(q,r)) \leq \liminf_{j\to\infty}  \sj(B(q,r)) \leq \liminf_{j\to\infty} \sj(B(q_j,2r)) 
\leq 2^{n-1} C_{AR}r^{n-1}.
\end{equation}
On the other hand, let $0<r<\diam(\partial\Omega_\infty)$. In \ref{CaseII}, by \eqref{diampOinf} we get that $r<\diam(\pOj)$ for all $j$ sufficiently large. In \ref{CaseI}, either $\diam(\pOinf) = \infty$ or $\diam(\pOinf) < \infty$, but we still have $r<\diam(\pOj)$ for all $j$ sufficiently large. Hence, using again \cite[Theorem 1.24]{Ma} and that $\sigma_j$ is Ahlfors regular with constant $C_{AR}$ we obtain 
  	\begin{multline} \label{eq:lowerAR}
		\mu_\infty(B(q,r)) \geq \mu_\infty\left(\overline{B\left(q,\frac{r}{2}\right)}\right)  
		\geq 
		\limsup_{j\to\infty} \sj\left(\overline{B\left(q,\frac{r}{2}\right)}\right) 
		\\
		\geq 
		\limsup_{j\to\infty} \sj\left(B\left(q_j,\frac{r}{4}\right) \right) 
		 \geq 4^{-(n-1)}C_{R}^{-1}r^{n-1}. 
	\end{multline} 
 These estimates guarantee that $\minf$ is Ahlfors regular with constant $2^{2(n-1)}C_{AR}$. Moreover by \cite[Theorem 6.9]{Ma}, 
	\begin{equation}  \label{eqn115}
	2^{-2(n-1)}C_{AR}^{-1} \minf \leq \mathcal{H}^{n-1}|_{\pOinf} \leq 2^{3(n-1)}C_{AR} \minf. 
	\end{equation}
	and consequently $\partial\Omega_\infty$ is Ahlfors regular with constant $2^{5(n-1)}C_{AR}^2$. This completes the proof of \ref{5-thm:pseudo-blow-geo} and hence that of Theorem \ref{thm:pseudo-blow-geo}. 
\end{proof}

\subsection{Convergence of elliptic matrices}
Our next goal is to show that there exists a constant coefficient real 
symmetric elliptic matrix $\wcalA^*$ with 
ellipticity constants 
$1=\lambda\le \Lambda<\infty$
(as in \eqref{def:UE}) so that for any $0<R<\diam(\pOinf)$ and for any $1\le p<\infty$.
\begin{equation}\label{eq:temp118}
\int_{ B(0,R) \cap \Oj } |\wcalA_{j}(Z) - \wcalA^*|^p dZ \to 0,\qquad \text{ as } j\to \infty.
\end{equation}

Fix $Z_0 \in \Oinf$ and set $B_0 = B(Z_0, 3\delta_{\infty}(Z_0)/8)$. Since $\pOj \to \pOinf$ and $\overline\Oj \to \overline\Oinf$ as $j\to\infty$,  for all sufficiently large $j$, we can see that $Z_0 \in\Oj$, 
\begin{equation}\label{eq:djdinfty}
	\frac{3}{4} \delta_{\infty}(Z_0) \leq  \wdj(Z_0) \leq \frac{5}{4} \delta_{\infty}(Z_0),
\end{equation}  
and 
\begin{equation}
B_0 \subset B\left(Z_0,\frac{\wdj(Z_0)}{2} \right) \subset \frac{5}{3} B_0 \subset \Oj \quad \text{for all } j.
\end{equation}
All these, Poincar\'e's inequality, and \eqref{def:smallCarleson} yield
\begin{multline}\label{esti-conve-Aj}
\fint_{B_0} |\wcalA_{j}(Z) - \langle\wcalA_{j}\rangle_{B_0}| dZ 
\lesssim 
 \delta_{\infty}(Z_0)
\fint_{B_0} |\nabla \wcalA_{j}(Z)| dZ 
\\
\lesssim
\fint_{B\left(Z_0, \wdj(Z_0)/2 \right)} |\nabla \wcalA_{j}| \wdj(Z) dZ
\le \mathcal{C}(\Oj,\wcalA_j) 
<\epsilon_j.
\end{multline}

\begin{remark}\label{rem:oscila-KP} 
We note that if we state the Main Theorem using the oscillation assumption \eqref{def:oscA}, we can easily conclude the same estimate: 
\[
	\fint_{B_0} |\wcalA_{j}(Z) - \langle\wcalA_j\rangle_{B_0}| dZ
	\lesssim
	\fint_{B(Z_0,\delta_j(Z_0)/2)} |\mathcal{A}_j(Z) - \langle\mathcal{A}_j\rangle_{B(Z_0,\delta_j(Z_0)/2)}| dZ 
	\le
	\osc(\Omega_j, \mathcal{A}_j)
		< \epsilon_j.
\]
From here the proof continues the same way.
\end{remark}

Note that all the matrices $\wcalA_{j}$ are 
uniformly elliptic (i.e., all of them satisfy \eqref{def:UE}), with the 
same constants 
$1=\lambda\le\Lambda<\infty$,
and in particular $\{\langle\wcalA_{j} \rangle_{B_0}\}_j$ is a bounded sequence of constant real matrices. Hence, passing to a subsequence and relabeling $\langle\wcalA_{j} \rangle_{B_0}$ converges to some constant elliptic matrix, denoted by $\wcalA^*(B_0)$. Combining this with \eqref{esti-conve-Aj}, the dominated convergence theorem yields 
\begin{equation}\label{conv-B0}
	\fint_{B_0} |\wcalA_{j} (Z) - \wcalA^*(B_0)| dZ \to 0\quad \text{ as } j\to\infty,
\end{equation}
that is,  $\wcalA_{j}$ converges in $L^1(B_0)$ to a constant elliptic matrix $\wcalA^*(B_0)$. Moreover, passing to a further subsequence an relabeling $\wcalA_{j}\to \wcalA^*(B_0)$ almost everywhere in $B_0$. In 
particular, $\wcalA^*(B_0)$ is a real symmetric elliptic matrix
(i.e., it satisfies \eqref{def:UE}), with ellipticity constants 
$1=\lambda\le \Lambda<\infty$. 
It is important to highlight that all the previous subsequences and relabeling only depends 
on the choice of $Z_0 \in \Oinf$.  In any case, since $\wcalA^*(B_0)$ is a constant coefficient matrix we set 
$\wcalA^*:=\wcalA^*(B_0)$.

Let us pick a countable collection of points $\{Z_k\}\subset \Omega_\infty$ so that $\Omega_\infty=\cup_k B_k$ with $B_k = B(Z_k, 3\delta_{\infty}(Z_k)/8)$. 
We can repeat the previous argument with any $Z_k$ and define $\wcalA^*(B_k)$, a constant  real symmetric elliptic matrix satisfying \eqref{def:UE} so that for some subsequence depending on $k$, we obtain that  $\wcalA_{j}\to \wcalA^*(B_k)$ in $L^1(B_k)$ and a.e in $B_k$ as $j\to\infty$. In particular, $\wcalA^*(B_{k_1})=\wcalA^*(B_{k_2})$ a.e. in  $B_{k_1}\cap B_{k_2}$ (in case it is non-empty). Note that $\Oinf$ is path connected (since it satisfies the Harnack chain condition), hence for any $k$ we can find a path joining $Z_k$ and $Z_0$ and cover this path with a finite collection of the previous balls to easily see that $\wcalA^*(B_{k})=\wcalA^*=\wcalA^*(B_0)$. Moreover, using a diagonalization argument, we can show that there exists a subsequence, which we relabel, so that for all $k$, we have that  $\wcalA_{j}\to \wcalA^*$ in $L^1(B_k)$ and a.e in $B_k$ as $j\to\infty$. From this, and since the matrices concerned are all uniformly bounded, one can prove that for any $1\leq p<\infty$ and for all $Z\in\Omega_\infty$
\begin{equation}\label{Ajcvinp}
\fint_{B_Z} |\wcalA_{j}(Y) - \wcalA^*|^p dY \to 0\quad \text{ as } j\to\infty,
\end{equation}
where $B_Z=B(Z,\delta(Z)/2)$.

We are now ready to start proving our claim \eqref{eq:temp118}. Recalling that $\overline{\Oj} \to \overline{\Oinf}$, $\pOj \to \pOinf$ in the sense of Definition \ref{def:cvsets}, and that $\pOj, \pOinf$ have zero Lebesgue measure since they are Ahlfors regular sets, one can see that 
\begin{align}
	B(0,R) \cap \left( \Oj \triangle \Oinf \right) \subset B(0,R) \cap \left( \left(\overline\Oj \triangle \overline\Oinf \right) \cup \left(\overline\Oj \cap \pOinf \right) \cup \left( \overline\Oinf \cap \pOj \right) \right)
\end{align}
and hence the Lebesgue measure of the set on the left hand side  tends to zero as $j\to\infty$. This and the fact that $\|\wcalA_{j}\|_\infty, \|\wcalA^*\|_\infty\le\Lambda$ give
\begin{equation}\label{eq:temp119}
	\int_{B(0,R) \cap \left(\Oj \triangle \Oinf \right)} |\wcalA_{j}(Z) - \wcalA^*|^p dZ \to 0,\qquad \text{ as } j \to \infty.
\end{equation}

On the other hand, let $\varrho>0$ be arbitrarily small and let $\epsilon = \epsilon(\varrho)>0$ be a small constant to be determined later. Set 
\[
\Oinf^{\epsilon,1} := B(0,R) \cap \{Z\in\Oinf: \delta_{\infty}(Z) < \epsilon\}
\quad\mbox{and}\quad
\Oinf^{\epsilon,2} := B(0,R) \cap \{Z\in\Oinf: \delta_{\infty}(Z) \geq \epsilon\}.
\]
Using the notation $\Delta(q,r):=B(q,r)\cap \pOinf$ with $q\in\pOinf$ and $r>0$, Vitali's covering lemma allows us to find a finite collection of balls $B(q_i,\epsilon)$ with $q_i\in \Delta(0,R+\epsilon)$, such that
\begin{equation}\label{est:Omega-near}
\Oinf^{\epsilon,1} \subset \bigcup_{i} B(q_i,5\epsilon). 
\end{equation}
Calling the number of balls $L_1$ we get the following estimate 
\begin{align}\label{eq:countL2}
L_1\epsilon^{n-1} 
\lesssim
\sum_{i} \sigma_\infty \left(  \Delta(q_i,\epsilon) \right) 
= 
\sigma_\infty \Big( \bigcup_i\Delta(q_i,\epsilon) \Big) \leq \sigma_\infty \left( \Delta(0,R+2\epsilon) \right)
\lesssim 
(R+2\epsilon)^{n-1},
\end{align}
where we have used that $\pOinf$ is Ahlfors regular and also that $\Delta(q_i, \epsilon) \subset \Delta(0, R+2\epsilon)$ since $q_i \in \Delta(0,R+\epsilon)$. If we assume that $0<\epsilon<R$ we conclude that $L_1\lesssim (R/\epsilon)^{n-1}$ and moreover by \eqref{est:Omega-near} we conclude that $|\Oinf^{\epsilon,1}|\lesssim \epsilon$ (here the implicit constant depend on $R$). This and $\|\wcalA_{j}\|_\infty, \|\wcalA^*\|_\infty\le\Lambda$ give at once that for every $j$
\begin{equation}\label{eq:e1}
	\int_{\Oinf^{\epsilon,1} \cap \Oj} |\wcalA_{j}(Z) - \wcalA^*|^p dZ \lesssim \Lambda^p \epsilon<\frac{\varrho}2, 
\end{equation} 
provided $\epsilon$ is taken small enough which is fixed from now on. 

On the other hand, note that $\overline{\Oinf^{\epsilon,2}}$ is compact, hence we can find $Z_1,\dots, Z_{L_2}\in \overline{\Oinf^{\epsilon,2}}$ so that 
$\Oinf^{\epsilon,2} \subset \bigcup_{i=1}^{L_2} B_{Z_i}$ where $L_2$ depends on $\epsilon$ and $R$ which have been fixed already. Hence, by \eqref{Ajcvinp}
\[
	\int_{\Oinf^{\epsilon,2} \cap \Oj } |\wcalA_{j}(Z) - \wcalA^*|^p dZ 
	\leq \sum_{i=1}^{L_2} \int_{B_{Z_i}} |\wcalA_{j}(Z) - \wcalA^*|^p dZ 
	\to 0,\quad \text{ as } j\to\infty.
\]
In particular, we can find an integer $j_0 = j_0(R,\epsilon)$ such that
\begin{equation}\label{eq:e2}
	\int_{\Oinf^{\epsilon,2} \cap \Oj } |\wcalA_{j}(Z) - \wcalA^*|^p dZ < \frac{\varrho}{2}, \quad \text{ for any } j\geq j_0.
\end{equation}
Combining \eqref{eq:e1} and \eqref{eq:e2}, we conclude that
\begin{equation}\label{eq:temp120}
	\int_{B(0,R) \cap \left(\Oj \cap\Oinf\right)} |\wcalA_{j}(Z) - \wcalA^*|^p dZ <\varrho, \quad \text{ for any } j\geq j_0.
\end{equation}
This combined with \eqref{eq:temp119} proves the claim \eqref{eq:temp118}.

\subsection{Convergence of operator}
\begin{theorem}\label{thm:AW11}
The function $u_{\infty}$ solves the Dirichlet problem 
\begin{equation}\label{limit-eq2}
	 \left\{ \begin{array}{rl}
		-\divg(\mathcal{A}^* \nabla u_\infty) = 0 & \text{in } \Oinf, \\
		u_\infty > 0 & \text{in }\Oinf, \\
		u_\infty = 0 & \text{on } \pOinf,
	\end{array} \right.  
\end{equation}
in \ref{CaseI}, and solves the Dirichlet problem
\begin{equation}\label{limit-eq2case2}
	 \left\{ \begin{array}{rl}
		-\divg(\mathcal{A}^* \nabla u_\infty) = \delta_{\{X_0\}} & \text{in } \Oinf, \\
		u_\infty > 0 & \text{in }\Oinf, \\
		u_\infty = 0 & \text{on } \pOinf,
	\end{array} \right.
\end{equation}
in \ref{CaseII}. Hence, $u_\infty$ is a Green function in $\Oinf$ for a constant-coefficient elliptic operator $L_\infty=-\divg(\mathcal{A}^* \nabla)$ with pole at $\infty$ in \ref{CaseI} or at $X_0\in\Oinf$ in \ref{CaseII}.
\end{theorem}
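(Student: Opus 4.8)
The plan is to prove the two weak identities
$\int_{\Omega_\infty}\langle \mathcal{A}^*\nabla u_\infty,\nabla\varphi\rangle\,dX=0$ in \ref{CaseI} and
$\int_{\Omega_\infty}\langle \mathcal{A}^*\nabla u_\infty,\nabla\varphi\rangle\,dX=\varphi(X_0)$ in \ref{CaseII}, valid for every $\varphi\in C_c^\infty(\Omega_\infty)$, by passing to the limit in the corresponding identities satisfied by the $u_j$'s. Indeed, the other two conditions in \eqref{limit-eq2} and \eqref{limit-eq2case2} come for free: $u_\infty>0$ in $\Omega_\infty$ is the very definition $\Omega_\infty=\{u_\infty>0\}$, and the continuous vanishing of $u_\infty$ on $\partial\Omega_\infty$ was already obtained in the proof of \ref{3-thm:pseudo-blow-geo} in Theorem \ref{thm:pseudo-blow-geo} (using uniform H\"older continuity of the $u_j$'s and $u_j(p_j)=0$ for $p_j\in\partial\Omega_j$).

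First I fix $\varphi\in C_c^\infty(\Omega_\infty)$, set $K:=\supp\varphi$ and choose $R$ with $K\subset B(0,R)$. Since $K$ is a compact subset of the open set $\Omega_\infty$, covering $K$ by finitely many balls $B(z,\delta_\infty(z)/2)\subset\Omega_\infty$ and invoking $\partial\Omega_j\to\partial\Omega_\infty$, $\overline{\Omega_j}\to\overline{\Omega_\infty}$ in the sense of Definition \ref{def:cvsets} (exactly as in the proof of \ref{3-thm:pseudo-blow-geo}) one sees that $K\subset\Omega_j$ for all $j$ large, so $\varphi\in C_c^\infty(\Omega_j)$ for such $j$. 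In \ref{CaseI}, the poles $X_j$ escape to infinity, hence $L_ju_j=0$ weakly in $B(0,4R)\cap\Omega_j$ for $j$ large and therefore $\int_{\Omega_j}\langle\mathcal{A}_j\nabla u_j,\nabla\varphi\rangle\,dX=0$. In \ref{CaseII}, $u_j=G_j(X_j,\cdot)=G_j(\cdot,X_j)$ (the Green function is symmetric since $L_j$ is), so Theorem \ref{thm:gw} gives $\int_{\Omega_j}\langle\mathcal{A}_j\nabla u_j,\nabla\varphi\rangle\,dX=\varphi(X_j)$, and $\varphi(X_j)\to\varphi(X_0)$ because $X_j\to X_0$.

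The core step is to pass to the limit in $\int_K\langle\mathcal{A}_j\nabla u_j,\nabla\varphi\rangle\,dX$; here strong $L^p$-convergence of $\mathcal{A}_j$ must compensate only-weak convergence of $\nabla u_j$. I split $\mathcal{A}_j\nabla u_j=(\mathcal{A}_j-\mathcal{A}^*)\nabla u_j+\mathcal{A}^*\nabla u_j$. For the first summand, H\"older's inequality gives
$\big|\int_K\langle(\mathcal{A}_j-\mathcal{A}^*)\nabla u_j,\nabla\varphi\rangle\big|\le\|\nabla\varphi\|_\infty\,\|\mathcal{A}_j-\mathcal{A}^*\|_{L^{p'}(K)}\,\|\nabla u_j\|_{L^p(K)}$,
where in \ref{CaseI} I take $p=p'=2$ and use $\sup_j\|\nabla u_j\|_{L^2(K)}<\infty$ from \eqref{eqn:3.4}, while in \ref{CaseII} I take some $1<p<n/(n-1)$ and use $\sup_j\|\nabla u_j\|_{L^p(\Omega_j)}<\infty$ from Remark \ref{rem:deawffr}\,\ref{3-rem:deawffr} (see \eqref{eq:upperr}); in both cases $\|\mathcal{A}_j-\mathcal{A}^*\|_{L^{p'}(K)}\to0$ since $K\subset B(0,R)\cap\Omega_j$ for $j$ large and \eqref{eq:temp118} holds. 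For the second summand, as $\mathcal{A}^*$ is a constant symmetric matrix, $\langle\mathcal{A}^*\nabla u_j,\nabla\varphi\rangle=\langle\nabla u_j,\mathcal{A}^*\nabla\varphi\rangle$ with $\mathcal{A}^*\nabla\varphi\in C_c(\R^n)$, so the weak convergence $\nabla u_j\rightharpoonup\nabla u_\infty$ (in $L^2_{\rm loc}(\R^n)$ in \ref{CaseI} by \ref{1-thm:pseudo-blow-geo}, in $L^p_{\rm loc}(\R^n)$ in \ref{CaseII} by Remark \ref{rem:deawffr}\,\ref{3-rem:deawffr}) yields $\int_K\langle\nabla u_j,\mathcal{A}^*\nabla\varphi\rangle\to\int_K\langle\nabla u_\infty,\mathcal{A}^*\nabla\varphi\rangle=\int_{\Omega_\infty}\langle\mathcal{A}^*\nabla u_\infty,\nabla\varphi\rangle$. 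Combining the two summands proves the desired weak identities.

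Together with $u_\infty>0$ in $\Omega_\infty$ and the continuous vanishing of $u_\infty$ on $\partial\Omega_\infty$, this shows $u_\infty$ solves \eqref{limit-eq2} (resp. \eqref{limit-eq2case2}). Finally, since $u_\infty$ also inherits, by lower semicontinuity under the above weak convergence together with \eqref{eqn:qweak}--\eqref{eqn:glq} applied to the $u_j$'s, the integrability and $W^{1,1}_0(\Omega_\infty)$-type membership characterizing a Green function, the uniqueness statement in Theorem \ref{thm:gw} identifies $u_\infty$ with the Green function of $L_\infty=-\divg(\mathcal{A}^*\nabla)$ in $\Omega_\infty$ with pole at $X_0$ in \ref{CaseII}; in \ref{CaseI}, $u_\infty$ is a positive $L_\infty$-solution vanishing on $\partial\Omega_\infty$, i.e.\ a Green function with pole at infinity. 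I expect the only genuinely delicate point to be this interchange of the weak limit of $\nabla u_j$ with the merely strong-$L^p$ limit of $\mathcal{A}_j$, handled by the splitting above, and in \ref{CaseII} keeping careful track of the fact that near the pole $\nabla u_j$ is uniformly bounded only in $L^p$ with $p<n/(n-1)$, which forces the H\"older exponent choice and is precisely matched by the $L^{p'}$-convergence of the matrices.
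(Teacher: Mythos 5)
Your proposal is correct and follows essentially the same route as the paper: establish the weak identities for $u_j$ (zero in \ref{CaseI} since the pole escapes, $\varphi(X_j)\to\varphi(X_0)$ in \ref{CaseII} via Theorem \ref{thm:gw} and symmetry), then pass to the limit by splitting $\mathcal{A}_j\nabla u_j=(\mathcal{A}_j-\mathcal{A}^*)\nabla u_j+\mathcal{A}^*\nabla u_j$, using H\"older with exponent $2$ in \ref{CaseI} and $p<n/(n-1)$ in \ref{CaseII}, the strong $L^{p'}$ convergence \eqref{eq:temp118}, the uniform gradient bounds \eqref{eqn:3.4}/\eqref{eq:upperr}, and the weak convergence of $\nabla u_j$, exactly as in \eqref{eqn:3.8}. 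The only cosmetic differences (testing the local solution property directly in \ref{CaseI} instead of the representation \eqref{eqn:int-parts}, and the brief uniqueness remark at the end) do not change the argument.
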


\begin{proof}
Let $\psi\in C^\infty_c(\Oinf)$. Since $\overline{\Oj}\to\overline{\Oinf}$ and $\pOj\to\pOinf$, it follows that $\psi\in C^\infty_c(\Oj)$  for $j$ sufficiently large. 
In \ref{CaseI}, using \eqref{def:ujcasei} and \eqref{eqn:int-parts}  we have
	\begin{equation}\label{eqn116}
		\int_{\Oj} \langle \wcalA_{j}\nabla \uj,\nabla \psi\rangle dZ 
		=
	 \frac{1}{\oj^{X_j}(B(0,1)) }\int_{\Omega_j} \langle\wcalA_j\nabla G_j(X_j,\cdot), \nabla \psi\rangle  dZ
	 =  
	 \frac{\psi(X_j)} {\omega_j^{X_j}(B(0,1))} \to 0,
	 \end{equation}
	 as $j\to\infty$ since $\wXj \to \infty$ by \eqref{eq:temp1}. 
	 Analogously, in \ref{CaseII}, by \eqref{def:ujcaseii} and \eqref{eqn:int-parts} we obtain 
	 \begin{equation}\label{eqn116case2}
	\int_{\Oj} \langle \wcalA_{j}\nabla \uj,\nabla \psi\rangle dZ 
	=
	\int_{\Omega_j} \langle\wcalA_j\nabla G_j(X_j,\cdot), \nabla \psi\rangle  dZ
	=
	\psi(X_j) \to \psi(X_0).
	 \end{equation}
	 as $j\to\infty$ since $\wXj \to X_0$.
	
	Suppose next that $\spt \psi \subset B(0,R)$. Let $r=2$ for \ref{CaseI}, and pick $r\in [1,n/(n-1))$ for \ref{CaseII}. 	
	By \ref{1-thm:pseudo-blow-geo} in Theorem \ref{thm:pseudo-blow-geo} in \ref{CaseI} and \ref{3-rem:deawffr} in Remark \ref{rem:deawffr} in \ref{CaseII} it follows that $\nabla \uj \rightharpoonup \nabla u_{\infty}$ in $L^r(B(0,R))$. On the other hand, 
	\begin{align}\label{eqn:3.8}
		& \left| \int_{\Oj}  \langle \wcalA_{j}\nabla \uj,\nabla \psi\rangle dZ - \int_{\Oinf}  \langle \wcalA^*\nabla u_\infty,\nabla \psi\rangle
		dZ \right|  
		\\
				& \qquad \leq \|\nabla \psi\|_{L^{\infty}} \left( \int_{\Oj\cap B(0,R)} |\wcalA_{j} - \wcalA^* |^{r'} dZ \right)^{\frac{1}{r'}} \left( \int_{\Oj\cap B(0,R)} |\nabla \uj|^r \right)^{\frac{1}{r}} 
		\nonumber \\
		& \qquad \qquad \qquad + \left| \int_{\Oj \cap B(0,R)} \langle \wcalA^* \nabla \uj, \nabla \psi \rangle dZ - \int_{\Oinf \cap B(0,R)} \langle \wcalA^* \nabla u_{\infty}, \nabla \psi \rangle dZ \right|. \nonumber 
	\end{align}
	Using \eqref{eqn:3.4} in \ref{CaseI} or \eqref{eq:upperr} in \ref{CaseII},  and \eqref{eq:temp118} with $p=r'$, the term in the second line of \eqref{eqn:3.8} tends to zero as $j\to\infty$.
	Concerning the last term, since $\wcalA^*$ is a constant-coefficient matrix, it follows that $\wcalA^* \nabla \uj \rightharpoonup \wcalA^* \nabla u_{\infty}$ in $L^r(B(0,R))$. Moreover $\overline\Oj=\overline{\{\uj > 0 \}} \to \overline\Oinf=\overline{\{u_{\infty} > 0 \}}$, thus
\[
		\lim_{j\to\infty} \int_{\Oj} \langle \wcalA^* \nabla \uj, \nabla \psi\rangle = \int_{\Oinf} \langle \wcalA^* \nabla u_{\infty}, \nabla \psi \rangle.
\]
	Combining these with \eqref{eqn116}--\eqref{eqn:3.8} we eventually conclude that
	\begin{equation}
		\int_{\Oinf} \wcalA^* \nabla u_{\infty} \cdot \nabla \psi = 0 \quad \text{for all } \psi \in C_c^\infty(\Oinf)
	\end{equation}
	in \ref{CaseI}, i.e., $-\divg(\mathcal{A}^*\nabla u_{\infty}) = 0$ in $\Oinf$; and in \ref{CaseII},
	\begin{equation}
		\int_{\Oinf} \wcalA^* \nabla u_{\infty} \cdot \nabla \psi = \psi(X_0) \quad \text{for all } \psi \in C_c^\infty(\Oinf),
	\end{equation}
	 i.e., $-\divg(\mathcal{A}^*\nabla u_{\infty}) = \delta_{\{X_0\}}$ in $\Oinf$.
\end{proof}

\subsection{Analytic properties of the limiting domains}
As mentioned in Section \ref{sect:blowup}, in order to apply Theorem \ref{thm:hmu} we need to study the elliptic measures of the limiting domain with finite poles. In this section we construct these measures by a limiting procedure which is compatible with the procedure used to produce the limiting domain $\Oinf$.

\begin{theorem}\label{thm:blow-ana-pole}
	Under  \ref{1-assump},  \ref{2-assump},  \ref{3-assump}, and using the notation from Theorems \ref{thm:pseudo-blow-geo} and \ref{thm:AW11}, 
	the elliptic measure $\omega_{L_{\infty}}\in A_\infty(\sigma_\infty)$ (see Definition \ref{def:AinftyHMU}) with constants $\widetilde{C}_0=C_2 C_{AR}^{4\theta} 2^{8(n-1)\theta}$ and $\widetilde{\theta}=\theta$, here $C_2$ is the constant in Remark \ref{rem:doubling:needed}.
\end{theorem}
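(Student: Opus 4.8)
The plan is to realize $\omega_{L_\infty}$ as a weak limit of the elliptic measures $\omega_{L_j}$ taken at corkscrew points, and then to push the $A_\infty$ inequality \eqref{eqn:Ainfty} through to the limit while bookkeeping constants. First I would record that, by \ref{4-thm:pseudo-blow-geo} and \ref{5-thm:pseudo-blow-geo} of Theorem \ref{thm:pseudo-blow-geo}, $\Omega_\infty$ is a uniform domain with Ahlfors regular boundary, hence satisfies the CDC and is Wiener regular; thus $\omega_{L_\infty}$ is a genuine Radon elliptic measure, the Dirichlet problem for the constant-coefficient operator $L_\infty=-\divg(\mathcal{A}^*\nabla)$ from Theorem \ref{thm:AW11} is solvable in $\Omega_\infty$ with continuous data, and \eqref{eqn:int-parts} holds in $\Omega_\infty$.

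Second, I would prove that $\omega_{L_j}^{A_j}\rightharpoonup\omega_{L_\infty}^{A}$ along corkscrew points. Fix $q\in\pOinf$ and $0<r<\diam(\pOinf)$, pick $q_j\in\pOj$ with $q_j\to q$, and set $A_j=A_j(q_j,r)$; arguing as in the proof of \ref{4-thm:pseudo-blow-geo} one has $A_j\to A$, a corkscrew point of $\Omega_\infty$ relative to $\Delta(q,r)$, with $\delta_j(A_j)\approx\delta_\infty(A)$ for $j$ large. For $\varphi\in C_c^\infty(\R^n)$ let $v_j^\varphi(Y):=\int\varphi\,d\omega_{L_j}^Y$, the $L_j$-solution in $\Omega_j$ with boundary data $\varphi|_{\pOj}$, extended by $\varphi$ outside $\Omega_j$ (continuous since $\pOj$ is Wiener regular); the $v_j^\varphi$ are uniformly bounded by $\|\varphi\|_\infty$ and, by the DeGiorgi--Nash--Moser estimates together with Lemma \ref{lem:vanishing} (uniform since the $\pOj$ are Ahlfors regular with the same constant), equicontinuous on compact sets. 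A compactness/diagonalization argument as in Theorem \ref{thm:pseudo-blow-geo} then yields a subsequential uniform limit $v^\varphi$ with $\nabla v_j^\varphi\rightharpoonup\nabla v^\varphi$ locally in $L^2$; using the matrix convergence \eqref{eq:temp118} and $\overline\Oj\to\overline\Oinf$ one checks $L_\infty v^\varphi=0$ in $\Omega_\infty$, while $v^\varphi\equiv\varphi$ outside $\Omega_\infty$ forces $v^\varphi|_{\pOinf}=\varphi$, so by uniqueness $v^\varphi=v_\infty^\varphi$ with $v_\infty^\varphi(Y):=\int\varphi\,d\omega_{L_\infty}^Y$ in $\Omega_\infty$. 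Evaluating at $A_j\to A$ and using equicontinuity gives $\int\varphi\,d\omega_{L_j}^{A_j}=v_j^\varphi(A_j)\to v_\infty^\varphi(A)=\int\varphi\,d\omega_{L_\infty}^{A}$ for all $\varphi\in C_c^\infty(\R^n)$, i.e. $\omega_{L_j}^{A_j}\rightharpoonup\omega_{L_\infty}^{A}$ in the sense of Definition \ref{def:wcvm}. Since any two corkscrew points relative to $\Delta(q,r)$ are joined by a Harnack chain of bounded length, it is immaterial which one is used to test \eqref{eqn:Ainfty}.

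Third, I would transfer the estimate. Fix a surface ball $\Delta(q,r)$ of $\Omega_\infty$, a ball $B'\subset B(q,r)$ with $\Delta'=B'\cap\pOinf$, and a Borel set $F\subset\Delta'$; by inner regularity of the Radon measure $\omega_{L_\infty}^{A}$ (and monotonicity of $\sigma_\infty$) it suffices to treat $F$ compact. Choose an open $\widetilde U\supset F$ with $\overline{\widetilde U}\subset B'$ and $\sigma_\infty(\overline{\widetilde U}\cap\pOinf)$ as close as we wish to $\sigma_\infty(F)$, and a ball $B'''$ concentric with $B'$, slightly smaller, with $\overline{B'''}\subset B'$. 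For $j$ large \ref{3-assump} gives
\[
\omega_{L_j}^{A_j}\big(\widetilde U\cap\pOj\big)\le C_0\left(\frac{\sigma_j\big(\overline{\widetilde U}\cap\pOj\big)}{\sigma_j\big(B'\cap\pOj\big)}\right)^{\theta}\omega_{L_j}^{A_j}\big(B'\cap\pOj\big),
\]
(up to Harnack-comparable adjustments of the scale of the pole $A_j$). Bounding $\omega_{L_j}^{A_j}(B'\cap\pOj)\le C_2\,\omega_{L_j}^{A_j}(\overline{B'''}\cap\pOj)$ by the doubling property of Remark \ref{rem:doubling:needed} (valid since $\delta_j(A_j)\ge r/M$), and letting $j\to\infty$: since $\widetilde U$ is open, $\omega_{L_\infty}^{A}(F)\le\omega_{L_\infty}^{A}(\widetilde U)\le\liminf_j\omega_{L_j}^{A_j}(\widetilde U\cap\pOj)$; since $\overline{B'''}\cap\pOinf$ is compact, $\limsup_j\omega_{L_j}^{A_j}(\overline{B'''}\cap\pOj)\le\omega_{L_\infty}^{A}(\overline{B'''}\cap\pOinf)\le\omega_{L_\infty}^{A}(\Delta')$; and from $\sigma_j\rightharpoonup\mu_\infty$ (Theorem \ref{thm:pseudo-blow-geo}\,\ref{5-thm:pseudo-blow-geo}) together with \eqref{comp-mu-H} and the uniform Ahlfors regularity of the $\sigma_j$ one obtains $\limsup_j\sigma_j(\overline{\widetilde U}\cap\pOj)\lesssim\sigma_\infty(\overline{\widetilde U}\cap\pOinf)$ and $\liminf_j\sigma_j(B'\cap\pOj)\gtrsim\sigma_\infty(\Delta')$, with constants that are powers of $C_{AR}$ and $2$. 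Shrinking $\widetilde U$ to $F$ and $B'''$ to $B'$, and collecting all constants, yields \eqref{eqn:Ainfty} for $(\Omega_\infty,L_\infty)$ with $\widetilde\theta=\theta$ and $\widetilde C_0=C_2\,C_{AR}^{4\theta}\,2^{8(n-1)\theta}$, as claimed. General Borel $F$ follows from the compact case by inner regularity of $\omega_{L_\infty}^{A}$.

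The main obstacle is exactly this last transfer: weak convergence controls open sets only from below and compact sets only from above, which are the ``wrong'' directions for the denominator $\omega_{L_\infty}^{A}(\Delta')$ and for the $\sigma_\infty$-quantities in the numerator, so these must be rerouted through slightly enlarged or shrunk balls and the resulting slack absorbed using the doubling of $\omega_{L_j}$ (Remark \ref{rem:doubling:needed}) and the uniform Ahlfors regularity of $\sigma_j$ with \eqref{comp-mu-H} — it is precisely this bookkeeping that produces the stated constant $\widetilde C_0$. A secondary difficulty is the identification of the weak limit of $\omega_{L_j}^{A_j}$ with $\omega_{L_\infty}^{A}$, which rests on the compactness argument for the solutions $v_j^\varphi$ and on the identification of their boundary traces via the CDC/Wiener regularity of $\Omega_\infty$.
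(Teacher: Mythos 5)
Your overall strategy---realizing $\omega_{L_\infty}$ as a weak limit of the $\omega_{L_j}$ through compactness of the solutions $v_j^\varphi$, and then transferring \eqref{eqn:Ainfty} by portmanteau-type arguments combined with the doubling property of Remark \ref{rem:doubling:needed} and the Ahlfors regularity of $\sigma_j$ and $\mu_\infty$---is the same as the paper's, and your third paragraph (reduction to compact sets, slightly shrunk/enlarged balls for numerator and denominator) matches the paper's transfer argument in all essentials. The genuine gap is in your second step, in \ref{CaseI}, where $\Oinf$ is unbounded: you identify the subsequential limit $v^\varphi$ with $\int\varphi\,d\omega_{L_\infty}^{(\cdot)}$ ``by uniqueness''. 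In an unbounded domain neither the existence of $\omega_{L_\infty}$ for continuous compactly supported data nor the uniqueness of bounded solutions with prescribed boundary values is automatic, and this is exactly where the paper does real work: $v_\infty$ is constructed by Perron's method as in \cite{HM1} as a limit of solutions $v_R$ on the truncations $\Oinf\cap B(0,2R)$; the maximum principle on the truncations gives $v_\infty\le v$; and the nonnegative difference $\widetilde v=v-v_\infty$, a bounded $L_\infty$-solution vanishing on $\pOinf$, is shown to vanish identically by applying the boundary H\"older estimate of Lemma \ref{lem:vanishing} at scales $R'\to\infty$, which uses crucially that $\diam(\pOinf)=\infty$ in \ref{CaseI} (part \ref{3-thm:pseudo-blow-geo} of Theorem \ref{thm:pseudo-blow-geo}). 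Without this Liouville-type step your identification of the limit measure, and hence the weak convergence $\omega_{L_j}^{A_j}\rightharpoonup\omega_{L_\infty}^{A}$ on which the whole transfer rests, is unproved in \ref{CaseI}.

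Two smaller points. First, the uniform local $W^{1,2}$ bounds you need for $\nabla v_j^\varphi\rightharpoonup\nabla v^\varphi$ (which you then play against \eqref{eq:temp118} to get $L_\infty v^\varphi=0$) do not follow from ``a compactness argument as in Theorem \ref{thm:pseudo-blow-geo}'': there the functions $u_j$ vanish on $\pOj$ and are subsolutions across the boundary, so Caccioppoli applies on balls meeting $\pOj$, whereas $v_j^\varphi$ equals $\varphi$ on $\pOj$. The paper gets the bound from the energy estimate for the Lax--Milgram solution, writing $v_j^\varphi=\varphi-h_j$ with $h_j\in W_0^{1,2}(\Oj)$ and $\|\nabla h_j\|_{L^2}\le(\Lambda/\lambda)\|\nabla\varphi\|_{L^2}$; for $\varphi\in C_c^\infty(\R^n)$ this is immediate, so the gap is easily filled, but it must be filled. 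Second, ``$v^\varphi\equiv\varphi$ outside $\Oinf$ forces $v^\varphi|_{\pOinf}=\varphi$'' is not justified as stated: a point $p\in\pOinf$ may belong to $\Oj$ for every $j$, so $v_j^\varphi(p)=\varphi(p)$ need not hold for any $j$, and you cannot appeal to exterior points accumulating at $p$ (that would presuppose exterior accessibility, which is what the Main Theorem is trying to prove). The correct argument, available with the tools you already set up, takes $p_j\in\pOj$ with $p_j\to p$ and uses $v_j^\varphi(p_j)=\varphi(p_j)$ together with equicontinuity and uniform convergence on compacta, as the paper does.
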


\begin{proof}
	Our goal is to show that the elliptic measures of $L_{\infty}$ with finite poles can be recovered as a limit of the elliptic measures of $\wLj = -\divg(\wcalA_{j}(Z)\nabla)$, and the $A_{\infty}$ property of elliptic measures is preserved when passing to a limit.

To set the stage we start with $0\le f\in \Lip(\partial\Omega_\infty)$ with compact support. Let $R_0>0$ be large enough so that $\spt f\subset B(0,R_0/2)$. We are going to take a particular solution to the following Dirichlet problem 
	\begin{equation}\label{D:Oinf}
\left\{ \begin{array}{ll}
L_\infty v = 0, & \text{in }\Oinf \\
v= f, & \text{on }\pOinf,
\end{array} \right.
\end{equation} 

In \ref{CaseII}, where the domain $\Oinf$ is bounded, the Dirichlet problem \eqref{D:Oinf} has a unique solution satisfying the maximum principle,
then we let $v_\infty$ be that unique solution. 

In \ref{CaseI}, where $\Oinf$ is unbounded, we follows the construction in \cite{HM1} using Perron's method (see \cite[pg. 588]{HM1} for details, which is done the Laplacian but holds for any constant coefficient operator, for the general case see also \cite{HMT2}). We denote the solution constructed in \cite{HM1} by 
\[
v_\infty(Z)=
\int_{\pOinf} f(q) d\omega_{L_\infty}^Z(q).
\]
For later use we need to sketch how it is constructed. For every $R>4R_0$ define $f_R=f \eta(\cdot/R)$, where $\eta\in C_c^\infty(B(0,2R)$ verifies $0\leq \eta\leq 1$, $\eta = 1$ for $|Z| < 1$. Let $v_R$ be the unique solution to $L_{\infty} v_R = 0$ in the bounded open set $\Omega_R = \Oinf \cap B(0,2R)$ with boundary value $f_R$.  Then one shows that $v_R\to v_\infty$ uniformly on compacta as $R\to\infty$,  and also that $v_\infty\in C(\overline{\Omega_\infty})$ satisfies the maximum principle $0\le \max_{\Omega_\infty}v_\infty\le \max_{\partial\Omega_\infty} f$.

Once the solution $v_\infty$ is defined we observe that since $\partial\Omega_\infty$ is Ahlfors regular we can use the 
Jonsson-Wallin trace/extension theory \cite{JW} to extend $f$ (abusing the notation we call the extension $f$) so that $0\le f\in C_c(\R^n)\cap W^{1,2}(\R^n)$ with $\spt f\subset B(0,R_0)$. For every $j$ we let $h_j\in W^{1,2}_0(\Omega_j)$ be the unique Lax-Milgram solution to the problem $L_j h_j= L_j f$. Initially, $h_j$ is only defined in $\Omega_j$ but we can clearly extend it by $0$ outside so that the resulting function, which we call again  $h_j$, belongs to $W^{1,2}(\R^n)$. If we next set $v_j=f-h_j\in W^{1,2}(\R^n)$ we obtain that $L_jv_j=0$ in $\Omega_j$ and indeed 
	\begin{equation}\label{eqn:102A}
		v_j(Z) = \int_{\pOj} f d\oj^Z, \qquad Z\in\Oj,
	\end{equation}
	see \cite{HMT2}. Here $\oj^Z$ is the elliptic measure of $\wLj$ in $\Oj$ with pole $Z$ and, as observed above, the fact that $\partial\Omega_j$ is Ahlfors regular implies in particular that $v_j\in C(\overline{\Omega_j})$ with $v_j|_{\partial\Omega_j}=f$. Note also that $v_j=f\in C(\R^n)$ on $\R^n\setminus\Omega_j$, hence $v_j\in\ C(\R^n)$. Moreover, by the maximum principle 
\begin{equation}\label{eqn:max-pple}
0\le \sup_{\Oj} v_j \leq \|f\|_{L^{\infty}(\pOj)} \leq \|f\|_{L^{\infty}(\RR^n)},
\end{equation}
thus the sequence $\{v_j\}$ is uniformly bounded. 
	
Our next goal is to show that $\{v_j\}$ is equicontinuous. Given an arbitrary $\varrho>0$ let $0<\gamma<\frac1{32}$ to be chosen. Since $f\in C_c(\R^n)$, it is uniformly continuous, hence letting $\gamma$ small enough (depending on $f$) we can guarantee that
	\begin{equation}\label{f-uc}
		|f(X)-f(Y)|<\frac{\varrho}8,
		\qquad
		\mbox{provided \,} |X-Y|<\gamma^{\frac14}
	\end{equation}

Our first claim is that if $\gamma$ is small enough depending on $n$, $C_{AR}$, $\Lambda$ (recall that
$\lambda = 1$), and $\|f\|_{L^{\infty}(\R^n)}$, there holds 
\begin{equation}\label{claim-equicont}
|v_j(X)-v_j(Y)|<\frac{\rho}{2},
\qquad
\forall\,X\in\Omega_j,\ Y\in\partial\Omega_j,\ |X-Y|<\sqrt{\gamma}.
\end{equation}
To see this we recall that $\pOj$ is Ahlfors regular with a uniform constant (independent of $j$), it satisfies the CDC with a uniform constant and \cite[Theorem 6.18]{HKM} (see also \cite{HMT2}) yields that for some $\beta>0$ and $C$ depending on $n$, $C_{AR}$, and $\Lambda$, but independent of $j$ (indeed this is the same $\beta$ as in Lemma \ref{lem:vanishing}), the following estimate holds:
\[
	\underset{B(Y_j,\sqrt{\gamma})\cap \Oj }{\osc} v_j
	\le 
	\underset{B(Y_j,\gamma^{1/4})\cap \pOj }{\osc} f  
	+ 
	C\|f\|_{L^{\infty}(\R^n)} \eta^{\beta}<\frac\varrho2,
\]
	where in the last estimate we have used \eqref{f-uc} and $\gamma$ has been chosen small enough so that $C\|f\|_{L^{\infty}(\R^n)} \eta^{\beta}<\varrho/4$.

We now fix $X,
 Y\in\R^n$ so that $|X-Y|<\gamma$ and consider several cases. 

\noindent\textbf{Case 1}: $X,Y\in\Omega_j$ with $\max\{\delta_j(X),\delta_j(Y)\}<\sqrt{\gamma}/2$.

In this case, we take $\widehat{x}\in\partial\Omega_j$ so that $|X-\hat{x}|=\delta_j(X)$. Note that $|Y-\widehat{x}|<\sqrt{\gamma}$ and we can use \eqref{claim-equicont} to obtain
\[
|v_j(X)-v_j(Y)|
\le
|v_j(X)-v_j(\widehat{x})|
+
|v_j(\widehat{x})-v_j(Y)|<\rho.
\]

\noindent\textbf{Case 2}: $X,Y\in \Omega_j$ with $\max\{\delta_j(X),\delta_j(Y)\}\ge \sqrt{\gamma}/2$.

Assuming without loss of generality that $\delta_j(X)\ge \sqrt{\gamma}/2$, necessarily $Y\in B(X,\delta_j(X)/2)\subset\Omega_j$. Then, by the interior H\"older regularity of $v_j$ in $\Oj$ (here $\alpha$ and $C$  depend only on $\Lambda$ and are independent of $j$) we conclude that
\[
|v_j(X) - v_j(Y)| 
\le 
C\left( \frac{|X-Y|}{\wdj(X)} \right)^{\alpha} \|v_j\|_{L^{\infty}(\Oj)} 
\le
C 2^\alpha \gamma^{\frac{\alpha}2}\|f\|_{L^{\infty}(\R^n)}
<
\varrho,
\]
provided $\varrho$ is taken small enough (again independently of $j$).

\noindent\textbf{Case 3}: $X,Y\notin \Omega_j$.

Here we just need to use \eqref{f-uc} and the fact that $v_j=f$ on $\R^n\setminus \Omega_j$:
\[
|v_j(X) - v_j(Y)| 
=
|f(X)-f(Y)|<
\rho.
\]

\noindent\textbf{Case 4}: $X\in \Omega_j$ and $Y\notin \Omega_j$.

Pick $Z\in\partial\Omega_j$ in the line segment joining $X$ and $Y$ (if $Y\in\partial\Omega_j$ we just take $Z=Y$) so that $|X-Z|,|Y-Z|\le |X-Y|<\gamma$. Using \eqref{claim-equicont}, the fact that $v_j=f$ on $\R^n\setminus \Omega_j$,  and \eqref{f-uc} we obtain
\[
|v_j(X) - v_j(Y)| 
\le
|v_j(X) - v_j(Z)| 
+
|v_j(Z) - v_j(Y)| 
<
\frac{\varrho}2+|f(Z) - f(Y)| 
<\varrho.
\]
	
If we now put all the cases together we have shown that, as desired,  $\{v_j\}$ is equicontinuous.

On the other hand, recalling that $h_j\in W_0^{1,2}(\Omega_j)$ satisfies $L_j h_j=L_j f$ in the weak sense in 
$\Omega_j$, that $f\in W^{1,2}(\R^n)$, and that $\lambda =1$, we see that
\begin{multline*}
 \|\nabla h_j\|_{L^2(\Omega_j)}^2
\le
\int_{\Omega_j} \langle \wcalA_{j}\nabla h_j,\nabla h_j\rangle dX
=
\int_{\Omega_j} \langle \wcalA_{j}\nabla f,\nabla h_j\rangle dX
\le
\Lambda
\|\nabla f\|_{L^2(\Omega_j)}
\|\nabla h_j\|_{L^2(\Omega_j)}.
\end{multline*}
We next absorb the  last term, use that $v_j=f-h_j$ and that $h_j$ has been extended as $0$ outside of $\Omega_j$:
\[
\|\nabla v_j\|_{L^2(\R^n)}
\le
\|\nabla f\|_{L^2(\R^n)}
+
\|\nabla h_j\|_{L^2(\R^n)}
=
\|\nabla f\|_{L^2(\R^n)}
+
\|\nabla h_j\|_{L^2(\Omega_j)}
\le (1+\Lambda)
\|\nabla f\|_{L^2(\R^n)}.
\]
This along with \eqref{eqn:max-pple} yield
\begin{equation}\label{unfi-vj}
 \sup_j \|\nabla v_j \|_{L^2(\RR^n)} \leq  (1+\Lambda) \|\nabla f\|_{L^2(\RR^n)}, \quad \text{and } \sup_j \|v_j \|_{L^2(B(0,R))} \leq C_R. 
\end{equation}
We notice that all these estimates hold for the whole sequence and therefore, so it does for any subsequence.

Let us now fix an arbitrary subsequence $\{v_{j_k}\}_k$. By \eqref{unfi-vj} there are a further subsequence and $v\in C(\R^n)\cap W^{1,2}_{\rm loc}(\RR^n)$ with $\nabla v \in L^2(\RR^n)$, such that $v_{j_{k_l}} \to v$ uniformly on compact sets of $\RR^n$ (hence $v\ge 0$) and $\nabla v_{j_{k_l}}\rightharpoonup \nabla v$ in $L^2(\RR^n)$ as $l\to\infty$. Here it is important to emphasize that the choice of the subsequence may depend on the boundary data $f$ and the fixed subsequence, and the same happens with $v$ , and this could be problematic, later we will see that  this is not the case.

To proceed we next see that $v$ agrees with $f$ in $\partial\Omega_\infty$. Given $p\in \pOinf$, there exist $p_{j_{k_l}} \in \partial\Omega_{j_{k_l}}$ with $p_{j_{k_l}} \to p$ as $l\to\infty$. Using the continuity of $v$ and $f$ at $p$, the uniform convergence of $v_{j_{k_l}} $ to $v$ on  $\overline{B(p,1)}$  and the fact that $v_{j_{k_l}} =f$ on $\partial\Omega_{j_{k_l}} $, we have
\begin{multline*}\label{eqn:105A}
|v(p) - f(p)| 
\leq |v(p) - v(p_{j_{k_l}})| + |v(p_{j_{k_l}}) - v_{j_{k_l}}(p_{j_{k_l}})| +|f(p_{j_{k_l}} ) - f(p)| 
\\
\leq |v(p) - v( p_{j_{k_l}} )| + \|v-v_{j_{k_l}} \|_{L^{\infty}(\overline{B(p,1)})} + |f( p_{j_{k_l}}) - f(p)|
\to 0,
\quad\mbox{as }l\to\infty,
\end{multline*}
thus $v(p)=f(p)$ as desired.

Next, we claim the function $v$ solves the Dirichlet problem \eqref{D:Oinf}. We know that $v\in C(\R^n)$ with $v=f$ in $\partial\Omega_\infty$. Hence, we only need to show that $L_\infty v = 0$ in $\Oinf$. To this aim, let us  take $\psi \in C_c^1(\Oinf)$ and let $R>0$ be large enough so that $\spt \psi\subset B(0,R)$. Since $\overline{\Oj} \to \overline{\Oinf}$, for all $l$ large enough we have that $\psi \in C_c^1(\Omega_{j_{k_l}})$ in which case 
 	\begin{equation}\label{eqn:106A}
		\int_{\R^n} \langle \wcalA_{j_{k_l}}\nabla v_{j_{k_l}},\nabla \psi\rangle dZ= 0,
	\end{equation}
	since $L_{j_{k_l}} v_{j_{k_l}}=0$ in $\Omega_{j_{k_l}}$ in the weak sense. Then, by \eqref{unfi-vj} and the fact that $\spt \psi\subset \Omega_\infty\cap\Omega_{j_{k_l}}\cap B(0,R)$,
	\begin{align*}
	& \left| 	\int_{\R^n} \langle \wcalA^* \nabla v, \nabla \psi \rangle dZ \right|=	
	\left| \int_{\Omega_{j_k}}  \langle \wcalA_{j_k}\nabla v_{j_{k_l}},\nabla \psi\rangle dZ - \int_{\Oinf}  \langle \wcalA^*\nabla v,\nabla \psi\rangle
	dZ \right|  
	\\
	& \qquad \leq  (1+\Lambda)\|\nabla f\|_{L^2(\R^n)}\|\nabla \psi\|_{L^{\infty}} \left( \int_{\Oj\cap B(0,R)} |\wcalA_{j_{k_l}} - \wcalA^* |^2 dZ \right)^{\frac{1}{2}}
	\nonumber \\
	& \qquad \qquad \qquad + \left| \int_{\R^n} \langle \wcalA^* \nabla v_{j_{k_l}}, \nabla \psi \rangle dZ - \int_{\R^n}  \langle \wcalA^* \nabla v, \nabla \psi \rangle dZ \right|\to 0,\quad\mbox{as }l\to\infty,
	\end{align*}
where we have used \eqref{eq:temp118} with $p=2$ for the term in the second line, and the fact that since $\wcalA^*$ is a constant-coefficient matrix, it follows that $\wcalA^* \nabla v_{j_{k_l}} \rightharpoonup \wcalA^* \nabla v$ in $L^2(\R^n)$ as $l\to\infty$. This eventually shows that $L_\infty v=0$ in $\Omega_\infty$.

In \ref{CaseII} when the domain $\Oinf$ is bounded, the Dirichlet problem \eqref{D:Oinf} has a unique solution, and it satisfies the maximum principle, hence we must have that $v=v_\infty$. Therefore, we have  shown that given any subsequence $\{v_{j_k}\}_k$ there is a further subsequence $\{v_{j_{k_l}}\}_l$ so that $v_{j_{k_l}}\to v_\infty$ uniformly on compact sets of $\RR^n$ and $\nabla v_{j_{k_l}} \rightharpoonup \nabla v_\infty$ in $L^2(\RR^n)$ as $l\to\infty$. This eventually shows that entire sequence $\{v_j\}$ satisfies $v_j\to v_\infty$ uniformly on compact sets of $\RR^n$ and $\nabla v_{j} \rightharpoonup \nabla v_\infty$ in $L^2(\RR^n)$ as $j\to\infty$.

In \ref{CaseI} where the limiting domain $\Oinf$ is unbounded, we need more work to show the solution $v$ is indeed $v_\infty$. 
Recall that $f\in C_c(\R^n)$ with $\spt f\subset B(0,R_0)$. Given $\epsilon>0$, there is an integer $j_0 = j_0(\epsilon, R_0)\in \NN$ such that for $j\geq j_{0}$
and for any $p_j' \in \pOj \cap B(0,4R_0)$, there is $p' \in \pOinf \cap B(0,5R_0) $ close enough to $p_j'$ so that $|f(p') -f(p'_j)|<\epsilon$. Consequently, 
	\begin{equation}\label{eqn:106C}
	\sup_{\pOj} |f| = \sup_{\pOj \cap B(0,4R_0)} |f| \leq \sup_{\pOinf \cap  B(0,5R_0)} |f| + \epsilon = \sup_{\pOinf} |f| + \epsilon.  
	\end{equation}

For any $Z\in \Oinf$ there exists a sequence $Z_j \in \Oj$ such that $Z_j \to Z$ and $Z_j\in \overline{B(Z,\delta_\infty(Z)/2)}$ for all $j$ large enough. Since $v\in C(\R^n)$ it follows that for $j$ large enough $|v(Z)-v(Z_j)|<\epsilon$.
All these together with \eqref{eqn:max-pple}	and the fact that $v_{j_{k_l}} \to v$ uniformly on compact sets of $\RR^n$ as $l\to\infty$ give that for all $l$ large enough 
	\begin{equation}\label{eqn:106D}
0\le v(Z) \leq |v(Z) - v(Z_{j_{k_l}} )| + |v(Z_{j_{k_l}} ) - v_{j_{k_l}} (Z_{j_{k_l}} )| + |v_{j_{k_l}} (Z_{j_{k_l}} )| \leq 2\epsilon + \sup_{\pOj}|f|\le 3\epsilon+ \sup_{\pOinf} |f|,
\end{equation}
Letting $\epsilon\to 0$ we  get $0\le \sup_{\Oinf}v \leq \sup_{\pOinf} |f|$.

Let us recall that $\Omega_R=\Omega_\infty\cap B(0,2R)\subset \Omega_\infty$.  Since $v\in C(\R^n)$ with 
$v|_{\partial\Omega_\infty}=f$, and since $\spt f\subset B(0,R_0)$, for every $R>4R_0$ we have that $f_R|_{\partial\Omega_\infty}=f \eta(\cdot/R)\le v|_{\partial\Omega_\infty}$. Hence the maximum principle implies that $v_R \leq v$ in $\Omega_R$, and taking limits  we conclude that $ v_\infty \leq v$ on $\Oinf$. Write $0\le \widetilde{v}=v- v_\infty\in C(\overline{\Omega_\infty})$ so that $L_{\infty} \widetilde{v}=0$ in $\Oinf$ and $\widetilde{v}|_{\partial\Omega_\infty}=0$. For any $Z\in \Oinf$, since $\Oinf$ is a uniform domain with Ahlfors regular boundary, by Lemma \ref{lem:vanishing}
	  for any $\delta_\infty(Z)<R' < \diam(\pOinf)=\infty$ (see \ref{3-thm:pseudo-blow-geo} in Theorem \ref{thm:pseudo-blow-geo}) 
\begin{equation}\label{eqn:Holderinfty}
		0\le \widetilde{v}(Z) \lesssim \left( \frac{\delta_{\infty}(Z)}{R'} \right)^{\beta} \sup_{\Oinf} \widetilde{v} \leq 2 \left( \frac{\delta_{\infty}(Z)}{R'} \right)^{\beta} \sup_{\pOinf} f,    	\end{equation}
Letting $R'\to\infty$	we conclude that $\widetilde{v}(Z) = 0$ and hence $v=v_\infty$. Therefore, we have  shown that given a subsequence $\{v_{j_k}\}_k$ there is a further subsequence $\{v_{j_{k_l}}\}_l$ so that $v_{j_{k_l}}\to v_\infty$ uniformly on compact sets of $\RR^n$ and $\nabla v_{j_{k_l}} \rightharpoonup \nabla v_\infty$ in $L^2(\RR^n)$ as $l\to\infty$. This eventually shows that entire sequence $\{v_j\}$ satisfies $v_j\to v_\infty$ uniformly on compact sets of $\RR^n$ and $\nabla v_{j} \rightharpoonup \nabla v_\infty$ in $L^2(\RR^n)$ as $j\to\infty$.

Hence, in both \ref{CaseI} and \ref{CaseII}, if $0\le f\in \Lip(\partial\Omega_\infty)$ has compact support  then 
\begin{equation}\label{eqn:110A}
\lim_{j\to\infty} \int_{\pOj} f(q) d\oj^Z(q) =\lim_{j\to\infty} v_j(Z) = v_\infty(Z)=\int_{\pOinf} f(q) d\omega_{L_\infty}^Z(q),
\end{equation}
for any $Z\in\Omega_\infty$. A standard approximation argument and splitting each function on its positive and negative parts lead to shows that \eqref{eqn:110A} holds for all $f\in C_c(\RR^n)$, hence 
$\omega_j^Z \rightharpoonup \omega_{L_\infty}^Z$ as Radon measures for any $Z\in\Oinf$.

\medskip

Our next goal is to see that $\omega_{L_\infty}\in A_{\infty}(\sigma_{\infty})$ (where $\sigma_{\infty} = \mathcal{H}^{n-1}|_{\pOinf}$). Fix $p\in \pOinf$ and $0<r<\diam(\pOinf)$. Recall that whether $\diam(\pOinf)$ is finite or infinite, we always have $r< \diam(\Oj)$ for all $j$ sufficiently large. Let $\Delta' = B(m,s)\cap \pOinf$ with $m\in\pOinf$ and $B(m,s)\subset B(p,r) \cap \pOinf$. 
Let $A(p,r)\in \Omega_\infty$ be a corkscrew point relative to $\Delta(p,r)$ (whose existence is guaranteed by \ref{4-thm:pseudo-blow-geo} in Theorem \ref{thm:pseudo-blow-geo}). We can then find $p_j \in \pOj$ such that $p_j \to p$. Thus, for all $j$ large enough $B(p,r)\subset B(p_j,2r)$ and $\delta_j(A(p,r))\ge r/(2M)$. Hence, 
$A(p,r) $ is also a corkscrew point relative to $B(p_j,2r)\cap\partial\Omega_j$ in $\Oj$ with constant $4M$.  Since $m\in\pOinf$, we can also find $m_j\in\pOj$ such that $m_j\to m$. In particular, for $j$ sufficiently large
	\begin{equation}\label{eq:mjm}
		|m_j - m| < \frac{s}{5}.
	\end{equation}
	Note also that since all the $\Oj$'s are uniform and satisfy the CDC with the same constants, and all the operators $L_j$'s have ellipticity constants bounded below and above by $\lambda=1$ and $\Lambda$, we can conclude from Remark \ref{rem:doubling:needed} that there is a uniform constant $C_2$ depending on 
	$M, C_1, C_{AR}>1$, and
$\Lambda$, 
such that \eqref{doubling:needed} holds for all $\omega_j$ with the appropriate changes. Using this and  \cite[Theorem 1.24]{Ma} we obtain 
		\begin{multline}\label{eqn:113A}
			\oinf^{A(p,r)} (\Delta(m,s)) \geq \oinf^{A(p,r)} \left( \overline{B\left(m,\frac{4}{5}s\right)} \right)  \geq \limsup_{j\to\infty} \oj^{A(p,r)} \left( \overline{B\left(m,\frac{4}{5}s\right)} \right) 
			\\
			\geq \limsup_{j\to\infty} \oj^{A(p,r)} \left( \overline{B\left(m_j,\frac{3}{5}s\right)} \right) 
			\geq C_2^{-1} \limsup_{j\to\infty} \oj^{A(p,r)} \left(B\left(m_j,\frac{6}{5}s\right)\right),
		\end{multline}
		where we have used that $\delta_j(A(p,r))\ge r/(2M)\ge \frac35s/(2M)$.

Let $V$ be an arbitrary open set in $B(m,s)$, and note that by \eqref{eq:mjm}
$$
			V\subset B(m,s) \subset B\left(m_j, \frac{6}{5}s \right).
$$
	 Using again \cite[Theorem 1.24]{Ma}, we see that \eqref{eqn:113A} yields
		\begin{multline}
			\frac{\oinf^{A(p,r)}(V)}{\oinf^{A(p,r)}(\Delta(m,s))} 
			\leq 
			C_2 \dfrac{\liminf_{j\to\infty} \ojA(V)}{\limsup_{j\to\infty} \ojA\left(B\left(m_j,\frac{6}{5}s\right)\right)} 
			\\
			  \leq C_2 \liminf_{j\to\infty} \left( \dfrac{\ojA(V) }{\ojA\left(B\left(m_j,\frac{6}{5}s\right)\right) }\right). 
			\label{eqn:114A}
		\end{multline}
		The assumption $B(m,s) \subset B(p,r)$ implies $|m-p|\leq r-s$. Using this and that $m_j\to m$, $p_j\to p$ as $j\to\infty$ one can easily see that 
		$|m_j - p_j|<r-\frac{s}{5}$ for all $j$ large enough and hence
		\begin{equation}\label{tempincl}
			B\left(m_j, \frac{6}{5}s \right)\cap\partial\Omega_j \subset B(p_j,2r)\cap\partial\Omega_j.
		\end{equation}
		As mentioned above $A(p,r)$ is a corkscrew point relative to $B(p_j,2r)\cap\partial\Omega_j$ in $\Oj$. This, \eqref{tempincl} and the fact that by assumption, $\omega_j\in A_\infty(\sigma_j)$ with uniform constants $C_0, \theta$ allow us to conclude that
		\begin{equation}\label{eq:Ainftyoj}
			\dfrac{\ojA(V) }{\ojA\left(B\left(m_j,\frac{6}{5}s\right)\right) } \leq C_0 \left( \dfrac{\sj(V)}{\sj\left(B\left(m_j, \frac{6}{5}s \right) \right)} \right)^\theta
			\le C_0 C_{AR}^\theta \left( \dfrac{\sj(V)}{s^{n-1}} \right)^\theta,
		\end{equation}
		where in the last estimate we have used that $\partial\Omega_j$ is Ahlfors regular with constants $C_{AR}$.
		Combining \eqref{eqn:114A}, \eqref{eq:Ainftyoj}, the fact that $\sj \rightharpoonup \sinf$, \cite[Theorem 1.24]{Ma}, and \ref{5-thm:pseudo-blow-geo} in Theorem \ref{thm:pseudo-blow-geo}, we finally arrive at 
		\begin{multline*}
			\frac{\oinf^{A(p,r)}(V)}{\oinf^{A(p,r)}(\Delta(m,s))} 
			\leq C_0 C_{AR}^\theta \left(\liminf_{j\to\infty} \dfrac{\sj(V)}{s^{n-1}} \right)^\theta 
			\\
			\leq C_0 C_{AR}^\theta \left( \frac{\sinf(\overline V)}{s^{n-1}} \right)^\theta 
			\le 
			C_0 C_{AR}^{4\theta}  2^{8(n-1)\theta} \left( \frac{\sigma_\infty(\overline V)}{\sigma_\infty(\Delta(m,s))} \right)^\theta.
		\end{multline*}
and therefore we have shown that  for any open set $V\subset B(m,s)$ there holds 
\begin{equation}\label{eq:Ainftyoinf}
			\frac{\oinf^{A(p,r)}(V)}{\oinf^{A(p,r)}(\Delta(m,s))} 
			\le 
			C_0 C_{AR}^{4\theta}  2^{8(n-1)\theta} \left( \frac{\sigma_\infty(\overline V)}{\sigma_\infty(\Delta(m,s))} \right)^\theta.			
\end{equation}

Consider next an arbitrary Borel set $E\subset B(m,s)$. Since $\sigma_{\infty}$ and  $\oinf^{A(p,r)}$ are
		Borel regular, given any $\epsilon>0$ there is an open set $U$ and a compact set $F$ so that  $F\subset E\subset U \subset B(m,s)$ and $\oinf^{A(p,r)}(U\setminus F)+ \sigma_{\infty}(U\setminus F) <\epsilon$. Note that for any $x\in F$, there is $r_x >0 $ such that $B(x,2r_x) \subset U$. Using that $F$ is compact we can then show there exists a finite collection of points $\{x_i\}_{i=1}^m\subset F$ such that $F\subset \bigcup_{i=1}^m B(x_i, r_i) =:  V$ and $B(x_i, 2r_i) \subset U$ for $i= 1,\dots,m$. Consequently, $F\subset V \subset \overline V \subset U$ and $\sigma_{\infty}(\overline V \setminus F) \leq \sigma_{\infty}(U\setminus F) <\epsilon$. We next use \eqref{eq:Ainftyoinf} with $V$ to see that
\begin{multline*}
\frac{\oinf^{A(p,r)}(E)}{\oinf^{A(p,r)}(\Delta(m,s))} 
\leq 
\frac{\epsilon +\oinf^{A(p,r)}(F)}{\oinf^{A(p,r)}(\Delta(m,s))} 
\leq 
\frac{\epsilon +\oinf^{A(p,r)}(V)}{\oinf^{A(p,r)}(\Delta(m,s))} 
\\
\leq 
\frac{\epsilon}{\oinf^{A(p,r)}(\Delta(m,s))} 
+
C_0 C_{AR}^{4\theta}  2^{8(n-1)\theta} \left( \frac{\sigma_{\infty}(\overline V)}{\sigma_{\infty}(\Delta(m,s))} \right)^\theta 
\\
\leq 
\frac{\epsilon}{\oinf^{A(p,r)}(\Delta(m,s))} 
+
C_0 C_{AR}^{4\theta}  2^{8(n-1)\theta} \left( \frac{\sigma_{\infty}(E)+\epsilon}{\sigma_{\infty}(\Delta(m,s))} \right)^\theta. \label{eqn:119A}
\end{multline*}
Letting $\epsilon \to 0$ we obtain  as desired that $\oinf\in A_\infty(\sigma_\infty)$ with constants $C_0 C_{AR}^{4\theta}  2^{8(n-1)\theta} $ and $\theta$ and the proof is  complete. 
\end{proof}

\section{Proof of the Main Theorem}

Applying Theorem \ref{thm:pseudo-blow-geo}, we obtain that $\Omega_\infty$ is a uniform domain with constants $4M$ and $2C_1$, whose boundary is Ahlfors regular with constant $2^{5(n-1)}C_{AR}^2$. Moreover, Theorem \ref{thm:blow-ana-pole} gives that $\omega_{L_{\infty}}\in A_\infty(\sigma_\infty)$ with constants 
$\widetilde{C}_0=C_2 C_{AR}^{4\theta} 2^{8(n-1)\theta}$ and $\widetilde{\theta}=\theta$. 
Here $L_\infty=-\divg(\mathcal{A}^* \nabla)$ with $\mathcal{A}^*$  a constant-coefficient real
 symmetric uniformly elliptic matrix with ellipticity constants 
$1=\lambda\le \Lambda<\infty$.
We can then invoke Theorem \ref{thm:hmu}, 
to see that $\Oinf$ satisfies the exterior corkscrew condition with 
constant
\[N_0=N_0(4M,2C_1, 2^{5(n-1)}C_{AR}^2,\Lambda, C_0  C_2 C_{AR}^{4\theta} 2^{8(n-1)\theta},\theta)\] 
(see introduction to Section \ref{comp-tt}). Therefore,  since $0\in\pOinf$,  $0<\frac12<\diam(\pOinf)$ (recall that $\diam(\pOinf) = \infty$ in \ref{CaseI}, and $\diam(\pOinf) = \diam(\Oinf) = R_0 \geq 1$) there exists $A_0=A^-(0,\frac12)$ so that 
\begin{equation}\label{eqn:201A}
	B\left( A_0,\frac{1}{2N_0} \right) \subset B\left(0,\frac12\right)\setminus\overline{\Oinf}.
\end{equation}
Hence
\begin{equation}
	\dist\left(B\left(A_0,\frac{1}{4N_0}\right), \R^n\setminus \overline{\Oinf}\right) 
	\geq \frac{1}{4N_0}.
\end{equation} 
Since $\overline{\Oj} \to \overline{\Oinf}$, it follows that for all $j$ large enough
\begin{equation}\label{eqn:203A}
	B\left( A_0,\frac{1}{4N_0} \right) 
	\subset  B\left(0,\frac12\right)\setminus\overline{\Oj} \subset B(0,1)\setminus\overline{\Oj}.
\end{equation}
Hence for all $j$ large enough $A_0$ is a corkscrew point relative to $B(0,1)\cap\partial\Omega_j$ for $\R^n\setminus \overline{\Oj}$ with constant $4N_0$. This contradicts our assumption that  $\Omega_j$ has no exterior corkscrew point with constant $N=4N_0$ for the surface ball $B(0,1)\cap\partial\Omega_j$ and the proof is complete.

\end{document}